\newtheorem{thm}{Theorem}[section]
\newtheorem{lemma}[thm]{Lemma}
\newtheorem{proposition}[thm]{Proposition}
\newtheorem{corollary}[thm]{Corollary}
\theoremstyle{definition}
\newtheorem{dfn}[thm]{Definition}
\newtheorem{remk}[thm]{Remark}
\renewcommand{\H}{\mathbb{H}}
\DeclareMathOperator{\Aut}{Aut}
\DeclareMathOperator{\Tors}{Tors}
\DeclareMathOperator{\Hom}{Hom}
\newcommand{\f}[5]{
\begin{array}{rcl}
#1\colon #2 & \longrightarrow & #3 \\
#4 & \longmapsto & #5 \\
\end{array}
}
\newcommand{\cL}{{\mathcal L}}
\newcommand{\cM}{{\mathcal M}}
\newcommand{\cE}{{\mathcal E}}
\newcommand{\cA}{{\mathcal A}}
\newcommand{\cT}{{\mathcal T}}
\newcommand{\ul}[1]{\underline{#1}}
\newcommand{\wt}[1]{\widetilde{#1}}
\newcommand{\wh}[1]{\widehat{#1}}
\newcommand{\ov}[1]{\overline{#1}}
\DeclareMathOperator{\rank}{rank}
\newcommand{\ovLn}{{\ov{\cL_N}}}
\title[]{Eigenspace decomposition of Mixed Hodge Structures on Alexander Modules.}
\author{Eva Elduque}
\address{Departamento de Matem\' aticas, Universidad Aut\' onoma de Madrid, 28049 Madrid (Spain).}
\email {eva.elduque@uam.es}\urladdr{http://matematicas.uam.es/~eva.elduque}
\author{Mois\'es Herrad\'on Cueto}
\address{Department of Mathematics, Louisiana State University, 303 Lockett Hall, Baton Rouge, LA 70803, USA.}
\email {moises@lsu.edu}\urladdr{http://www.math.lsu.edu/~moises}
\keywords{infinite cyclic cover, Alexander module, mixed Hodge structure, thickened complex, mixed Hodge structure, semisimplicity}
\subjclass[2020]{14C30, 14F40, 14F45, 32S20, 32S35, 32S40, 55N30}
\begin{document}

\date{\today}

\begin{abstract}
In previous work jointly with Geske, Maxim and Wang, we constructed a mixed Hodge structure (MHS) on the torsion part of Alexander modules, which generalizes the MHS on the cohomology of the Milnor fiber for weighted homogeneous polynomials. The cohomology of a Milnor fiber carries a monodromy action, whose semisimple part is an isomorphism of MHS. The natural question of whether this result still holds for Alexander modules was then posed. In this paper, we give a positive answer to that question, which implies that the direct sum decomposition of the torsion part of Alexander modules into generalized eigenspaces is in fact a decomposition of MHS. We also show that the MHS on the generalized eigenspace of eigenvalue $1$ can be constructed without passing to a suitable finite cover (as is the case for the MHS on the torsion part of the Alexander modules), and compute it under some purity assumptions on the base space.
\end{abstract}
\maketitle


\setlength{\headheight}{15pt}

\thispagestyle{fancy}
\renewcommand{\headrulewidth}{0pt}

\section{Introduction}\label{s:intro}

Let $f:U\rightarrow \C^*$ be an algebraic map, where $U$ is a smooth connected complex algebraic variety. Moreover, assume that $f$ induces an epimorphism $f_*:\pi_1(U)\rightarrow \Z$ in fundamental groups.

The pair $(U,f)$ determines an infinite cyclic cover $\pi:U^f\rightarrow U$, that is, a covering space with deck group isomorphic to $\Z$. Indeed, $\pi$ is the pullback of the exponential map by $f$, as shown in the following fiber product diagram.
$$
\begin{tikzcd}
U^f\subset U\times \C\arrow[r,"f_\infty"] \arrow[d,"\pi"]\arrow[dr,phantom,very near start, "\lrcorner"]&
 \C \arrow[d,"\exp"] \\
 U\arrow[r,"f"] &
 \C^*.
\end{tikzcd}
$$
Under this presentation $U^f$ is embedded in $U\times \C$:
\[
U^f = \{ (x,z)\in U\times \C\mid f(x) = e^z\}.
\]

Let $k$ be $\Q$, $\R$ or $\C$, and let $R=k[t^{\pm 1}]$. By analogy with knot theory, the $R$-module $H_j(U^f;k)$ is called the $j$-th (homology) {\it $k$-Alexander module} of the pair $(U,f)$. The $R$-module structure is given as follows: multiplication by $t$ is given by the action of the generator $(x,z)\mapsto(x,z+2\pi i)$ of the deck transformations of $\pi:U^f\rightarrow U$. Since $U$ is homotopy equivalent to a finite CW-complex, $H_j(U^f;k)$ is a finitely generated $R$-module for each integer $j$. However, $H_j(U^f;k)$ is not a finite dimensional $k$-vector space in general (i.e. $H_j(U^f;k)$ is not necessarily a torsion $R$-module).

In \cite{EGHMW}, a canonical and functorial $k$-mixed Hodge structure (MHS for short) was constructed on  $\Tors_R H_i(U^f;k)$, for $k=\Q,\R,\C$. The theorem below summarizes some of its properties.

\begin{thm}\label{thm:summary}
Let $U$ be a smooth connected algebraic variety, and let $f:U\rightarrow\C^*$ be an algebraic map inducing an epimorphism on fundamental groups. Let $H_j(U^f;k)$ be the $j$-th homology $k$-Alexander module, endowed with the MHS from \cite{EGHMW}. Then,
\begin{enumerate}
\item\label{part:N} There exists $N\in \N$ such that $(t^N-1)$ acts nilpotently on $\Tors_R H_j(U^f;k)$, for all $j$ (\cite[Proposition 1.4]{budurliuwang}, \cite[Proposition 2.6.1]{EGHMW}).
\item\label{part:log} Multiplication by $\log(t^N)$ (seen as a power series in $(t^N-1)$) induces a MHS morphism from $\Tors_R H_j(U^f;k)$ to its $-1$st Tate twist $\Tors_R H_j(U^f;k)(-1)$ for all $j$ (\cite[Theorem 1.0.2]{EGHMW}).
\item\label{part:cover} The covering space map $\pi:U^f\rightarrow U$ induces a MHS morphism $H_j(\pi):\Tors_R H_j(U^f;k)\rightarrow H_j(U;k)$ for all $j$, where $H_j(U;k)$ is endowed with the dual of Deligne's MHS on $H^j(U;k)$ (\cite[Theorem 1.0.3]{EGHMW}).
\item\label{part:t} Multiplication by $t$ is a MHS automorphism of $\Tors_R H_j(U^f;k)$ if and only if $\Tors_R H_j(U^f;k)$ is a semisimple $R$-module (\cite[Theorem 1.0.5]{EGHMW}).
\end{enumerate}
\end{thm}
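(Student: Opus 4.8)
Parts \eqref{part:N}--\eqref{part:cover} are quoted from the cited references, so I only propose a proof of part \eqref{part:t}. Write $M:=\Tors_R H_j(U^f;k)$ and $\nu:=\log(t^N)\colon M\to M$; by part \eqref{part:N} this is a well-defined polynomial in $t^N-1$, and it is nilpotent. The first step is a purely algebraic reformulation. Since $R=k[t^{\pm1}]$ is a PID and $t^N-1$ is separable, the structure theorem for finitely generated torsion $R$-modules together with part \eqref{part:N} shows that $M$ is a semisimple $R$-module if and only if $t^N$ acts as the identity on $M$; and since $t^N-1=\nu\,(1+\tfrac12\nu+\cdots)$ with the second factor invertible, this holds if and only if $\nu=0$. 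So it suffices to prove that $t$ is an automorphism of the MHS on $M$ if and only if $\nu=0$.

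For the forward implication I would argue by strictness. Suppose $t$ is an MHS automorphism of $M$. Then $t$ and $t^{-1}$ preserve the Hodge filtration $F^\bullet M_\C$, so $t^N$ maps each $F^pM_\C$ onto itself; hence every power of $t^N-1$ preserves $F^pM_\C$, and therefore $\nu(F^pM_\C)\subseteq F^pM_\C$ for all $p$. On the other hand, by part \eqref{part:log}, $\nu$ is a morphism of MHS $M\to M(-1)$, hence strictly compatible with the Hodge filtrations, so $\nu(F^pM_\C)=\im\nu\cap F^p\big(M(-1)_\C\big)=\im\nu\cap F^{p-1}M_\C$. Combining these, $\im\nu\cap F^{p-1}M_\C=\im\nu\cap F^pM_\C$ for every $p\in\Z$. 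Thus the filtration induced by $F^\bullet$ on the subspace $\im\nu$ does not depend on its index; being exhaustive and separated, it is identically zero, so $\im\nu=0$. By the reformulation, $M$ is semisimple.

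For the reverse implication I would descend to a finite cyclic cover. Assume $M$ is semisimple, so $t^N$ acts as the identity on $M$. Let $U_N=\{(x,w)\in U\times\C^*\mid f(x)=w^N\}$, a smooth quasi-projective variety which is an $N$-fold cyclic cover of $U$; one checks that $U^f=(U_N)^{f_N}$ with $f_N(x,w)=w$, that the deck group of $U^f/U_N$ is generated by $t^N$, and that the Wang exact sequence of this infinite cyclic cover contains $H_{j+1}(U_N;k)\xrightarrow{\;\partial\;}H_j(U^f;k)\xrightarrow{\;t^N-1\;}H_j(U^f;k)$. Since $t^N-1$ vanishes on $M$, while any class killed by it spans a torsion submodule and so lies in $M$, the image of $\partial$ equals $M$; thus $\partial$ realizes $M$ as a quotient of $H_{j+1}(U_N;k)$, and by naturality of the Wang sequence it intertwines the action on $H_{j+1}(U_N;k)$ of the deck transformation $g\colon(x,w)\mapsto(x,\zeta_N w)$ of $U_N/U$ with multiplication by $t$ on $M$. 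As $g$ is an algebraic automorphism of $U_N$, it acts on Deligne's MHS on $H_{j+1}(U_N;k)$ by an isomorphism; so, granting that $\partial$ is a morphism of MHS for the structure of \cite{EGHMW} on $M$, the endomorphism $t$ that $g$ induces on the quotient MHS $M$ preserves the weight and Hodge filtrations, and hence is an MHS automorphism.

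The main obstacle, and where I expect the real work to lie, is the compatibility invoked at the end: that the Wang connecting map $\partial$ (equivalently, the maps relating $H_*(U^f)$, $H_*(U_N)$ and $H_*(U)$) is a morphism of mixed Hodge structures for the structure of \cite{EGHMW}. This is in the spirit of part \eqref{part:cover}, but with $U_N$ in place of $U$ and a connecting rather than a pullback map, and presumably requires unwinding the thickened-complex model that defines the MHS on $M$ and checking that it supports a Wang-type distinguished triangle. Alternatively one could identify $M\otimes_k\C$ with $\bigoplus_{\zeta^N=1}H_j(U;\cL_\zeta)$ for the finite-order rank-one local systems $\cL_\zeta$ determined by $f$, each summand being an isotypic piece of $H_j(U_N;\C)$ and hence a sub-MHS on which $t$ acts by the scalar $\zeta$; the compatibility of this identification with the MHS of \cite{EGHMW} is the analogous sticking point.
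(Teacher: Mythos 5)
Theorem~\ref{thm:summary} is a citation theorem: all four parts are quoted verbatim from \cite{EGHMW} (and \cite{budurliuwang} for part~\eqref{part:N}), and this paper contains no proof of any of them. So there is no in-paper argument to compare against for parts~\eqref{part:N}--\eqref{part:cover}, and you are right to simply cite them. For part~\eqref{part:t}, the paper again just cites \cite[Theorem 1.0.5]{EGHMW}, so the comparison below is partly against the proof one can infer from \cite{EGHMW} and partly against the \emph{new} construction in Section~\ref{s:semisimple} of this paper, which proves a stronger converse.

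Your algebraic reformulation ($M$ semisimple $\iff t^N=\mathrm{id}$ on $M\iff\nu=\log(t^N)=0$) is correct over $k=\Q,\R,\C$, and your forward implication is a clean and correct strictness argument: once $t$ preserves both filtrations, $\nu$ does too, while part~\eqref{part:log} and strictness force $\nu(F^p)=\im\nu\cap F^{p-1}$, hence $\im\nu\cap F^{p-1}=\im\nu\cap F^p$ for all $p$ and $\im\nu=0$. This is essentially the argument one expects from combining parts~\eqref{part:N} and~\eqref{part:log}.

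For the converse you pass to the finite cover $U_N$ and try to push $t$ down from the algebraic deck transformation of $U_N$ via the Wang connecting map $\partial\colon H_{j+1}(U_N;k)\to H_j(U^f;k)$. The observation $\im\partial=\ker(t^N-1)=\Tors_R H_j(U^f;k)$ under semisimplicity is correct, and so is the intertwining $\partial\circ g_*=t\circ\partial$. But, as you say yourself, the whole argument hinges on $\partial$ being a morphism of mixed Hodge structures for the \cite{EGHMW} MHS on the target, with the appropriate Tate twist; this is \emph{not} a consequence of parts~\eqref{part:N}--\eqref{part:cover} (part~\eqref{part:cover} concerns $H_j(\pi_N)$, not the connecting map $\partial$), and is genuinely nontrivial. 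So this is a real gap, not a cosmetic one, and the paper does \emph{not} fill it the way you propose. What the paper actually does (Section~\ref{s:semisimple}, Theorem~\ref{prop:tss}) is much more direct and stronger: it realises $t_{ss}$ at the chain level on the thickened de~Rham complex of $U_N$ by the map $\wt\cM_t^{ss}\colon\omega\otimes1\mapsto\cT^*\omega\otimes1$, where $\cT$ is the finite-order algebraic deck transformation of $U_N$. Since $\cT^*\frac{df_N}{f_N}=\frac{df_N}{f_N}$, this is a morphism of mixed Hodge complexes of sheaves with no twist, and it manifestly induces a finite-order (hence semisimple) operator commuting with $\wt\cM_t$; the Jordan--Chevalley uniqueness then identifies it with $t_{ss}$. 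This avoids entirely the need to upgrade a Wang long exact sequence to a sequence of MHS, which is the hard part of your route. Your alternative sketch via $\bigoplus_\zeta H_j(U;\cL_\zeta)$ runs into the same compatibility issue (and is in any case not literally an isomorphism of $M_\C$ onto that direct sum: the Wang sequence for $t-\zeta$ gives an extension, not an identification with $\ker(t-\zeta)$).
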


Part~(\ref{part:t}) of Theorem~\ref{thm:summary} is particularly helpful for understanding the MHS on $\Tors_R H_j(U^f;k)$. Indeed, if $\Tors_R H_j(U^f;k)$ is semisimple, then multiplication by any polynomial in $t$ with $k$-coefficients is a MHS morphism. Since the kernel of a MHS morphism is a sub-MHS of the domain, we obtain the following direct sum decomposition.

\begin{corollary}[See Corollary 7.0.4 in \cite{EGHMW}]\label{cor:eigenspaces}
Suppose that $\Tors_R H_j(U^f;\Q)$ is a semisimple $R$-module. Let $N$ be as in part~(\ref{part:N}) of Theorem~\ref{thm:summary}, and let $k = \Q$, $\R$ or $\C$. For $g\in k[x]$, let $\left(\Tors_R H_j(U^f;k)\right)_{g}$ be the kernel of $g(t)\colon \Tors_R H_j(U^f;k)\rightarrow \Tors_R H_j(U^f;k)$. Then, the inclusions give us the following $k$-MHS isomorphism.
$$
\Tors_R H_j(U^f;\Q)\cong \bigoplus_g \left(\Tors_R H_j(U^f;\Q)\right)_{g},
$$
where the sum runs over the monic irreducible factors of $x^N-1$ in $k[x]$.
\end{corollary}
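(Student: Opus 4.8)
The plan is to deduce the asserted splitting of mixed Hodge structures from part~(\ref{part:t}) of Theorem~\ref{thm:summary}, together with the elementary facts that morphisms of MHS are closed under composition and that kernels of morphisms of MHS are sub-MHS. Set $M:=\Tors_R H_j(U^f;k)$; we prove $M\cong\bigoplus_g M_g$ as $k$-MHS, where $g$ runs over the monic irreducible factors of $x^N-1$ in $k[x]$ and $M_g:=\ker\bigl(g(t)|_M\bigr)$. First I would dispose of the hypothesis for $k\neq\Q$: since $H_j(U^f;k)=H_j(U^f;\Q)\otimes_\Q k$ and $\Tors_R$ commutes with the flat extension $\Q\subseteq k$, we have $M=\Tors_R H_j(U^f;\Q)\otimes_\Q k$, and because a polynomial that is squarefree over $\Q$ remains squarefree over $k$ in characteristic zero, $M$ is a semisimple $R$-module as soon as $\Tors_R H_j(U^f;\Q)$ is. So we may assume $M$ is semisimple.

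Next I would make the $R$-module structure explicit. By part~(\ref{part:N}) of Theorem~\ref{thm:summary}, $(t^N-1)$ acts nilpotently on $M$. Writing the semisimple, finitely generated, torsion $R$-module $M$ as a direct sum of simple modules $R/(p)$ with $p$ irreducible, nilpotence of $(t^N-1)$ gives $p\mid(t^N-1)^m$ for some $m$, hence $p\mid(t^N-1)$, hence $(t^N-1)$ annihilates each summand; thus $(t^N-1)M=0$. Therefore $M$ is a module over $A:=k[t^{\pm1}]/(t^N-1)\cong k[x]/(x^N-1)$, and since $\operatorname{char}k=0$ the polynomial $x^N-1$ is squarefree, so $A\cong\prod_g k[x]/(g)$ with each factor a field; let $e_g\in A$ denote the associated orthogonal idempotents. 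Then $M=\bigoplus_g e_gM$, and because $M$ is annihilated by the squarefree polynomial $x^N-1$ one checks that $e_gM=M_g$.

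Now for the Hodge-theoretic content. Since $M$ is semisimple, part~(\ref{part:t}) of Theorem~\ref{thm:summary} says that multiplication by $t$ is an automorphism of MHS of $M$; as the morphisms of MHS $M\to M$ form a $k$-subspace of $\operatorname{End}_k(M)$ closed under composition and containing the inverse of any MHS automorphism, multiplication by every Laurent polynomial in $t$ with coefficients in $k$ is a morphism of MHS of $M$. In particular each $g(t)$ is such a morphism, so $M_g=\ker g(t)$ is a sub-MHS of $M$; and each idempotent $e_g$, which is represented by such a Laurent polynomial, acts on $M$ as a morphism of MHS that is the projector onto $M_g$ along $\bigoplus_{g'\neq g}M_{g'}$. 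Consequently, for every weight index $\ell$, any $w\in W_\ell M$ satisfies $e_g w\in W_\ell M\cap M_g=W_\ell M_g$, so $W_\ell M=\bigoplus_g W_\ell M_g$; the same argument over $\C$ gives $F^\bullet M_\C=\bigoplus_g F^\bullet(M_g)_\C$. This is precisely the statement that the identity $M\to\bigoplus_g M_g$ is an isomorphism of $k$-MHS. (Equivalently: the sum of the inclusions $\bigoplus_g M_g\to M$ is a morphism of MHS, bijective on underlying $k$-vector spaces, hence an isomorphism because the category of $k$-MHS is abelian.)

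As all the ingredients are already available, I do not anticipate a serious obstacle; the only point that needs (standard) care is the last step — passing from a direct-sum decomposition of the underlying $R$-module into sub-objects of the MHS category to a decomposition of the MHS itself, which is where one uses that morphisms of MHS are compatible with the filtrations (equivalently, that a bijective morphism of MHS is an isomorphism). The base-change compatibility used to reduce to $k=\Q$ is a routine secondary point.
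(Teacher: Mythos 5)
Your proposal is correct and follows essentially the same route as the paper: invoke part~(\ref{part:t}) of Theorem~\ref{thm:summary} so that every polynomial in $t$ acts as a morphism of MHS on the semisimple module, then use that kernels of MHS morphisms are sub-MHS to obtain the direct sum decomposition. The additional care you take with base change from $\Q$, with $(t^N-1)$ annihilating $M$, and with the idempotents and filtration compatibility are standard refinements of the same argument.
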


$\Tors_R H_j(U^f;k)$ is known to be semisimple in the following cases.
\begin{enumerate}
\item\label{part:first} $j\leq 1$ (\cite[Corollary 7.4.2]{EGHMW}).
\item\label{part:proper} $f$ is proper (\cite[Theorem 8.0.1]{EGHMW}).
\item\label{part:trans} $U$ is the affine complement of a hypersurface $H\subset\C^n$ transversal to infinity, and $f$ is the reduced defining polynomial of the hypersurface (\cite{Max06}). This can be generalized to certain hypersurface arrangement complements in a smooth projective complete intersection, under certain transversality conditions (see \cite{DL} for the precise setting).
\end{enumerate}

In all three cases, the techniques are vastly different: the proof of case~\eqref{part:first} uses the MHS from Theorem~\ref{thm:summary}, case \eqref{part:proper} uses the Beilinson-Bernstein-Deligne decomposition theorem \cite{BBD} and case \eqref{part:trans} uses transversality together with a Lefschetz type argument to reduce to the case when $f$ is homogeneous and thus has a global Milnor fiber.

We do not know of any example where $\Tors_R H_j(U^f;\Q)$ is not semisimple in our setting, where $U$ is smooth and $f$ is a map to $\C^*$. This lack of examples is mainly due to the fact that higher Alexander modules are harder to compute than the first (see case~(\ref{part:first}) above), the reason being that Fox free calculus is not available for computing higher Alexander modules. We expect such non semisimple examples to exist. In fact, if $f$ is taken to be a continuous map from $U$ to $S^1$, or if $U$ is allowed to have singularities, non-semisimple examples of $\Tors_R H_1(U^f;\Q)$ were recently found by Libgober \cite{Li21} and Maxim, Wang and the authors \cite{EHMW}, respectively.

Given the lack of evidence that semisimplicity holds in general, it is natural to ask if a generalization of Corollary~\ref{cor:eigenspaces} can exist for the torsion parts of all homology Alexander modules, without the semisimplicity assumption. For that purpose, one needs to find an isomorphism of MHS from $\Tors_R H_j(U;\Q)$ to itself that can play the role that multiplication by $t$ plays in Corollary~\ref{cor:eigenspaces}. We find one such MHS isomorphism in the main result of this paper, as follows.

\begin{thm}\label{thm:tss}
Let $t=t_{ss}  t_{u}$ be the Jordan-Chevalley decomposition of $t$ acting on $\Tors_R H_j(U^f;\Q)$ as the product of a semisimple (i.e. diagonalizable) operator and a unipotent (i.e. $t_u-\id$ is nilpotent) operator that commute with each other. Then,
$$
t_{ss}  \colon \Tors_R H_j(U^f;\Q)\to \Tors_R H_j(U^f;\Q)
$$
is a MHS isomorphism for all $j$.
\end{thm}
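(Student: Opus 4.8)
The plan is to reduce the theorem to the statement that the generalized eigenspace decomposition of $t$ on $M := \Tors_R H_j(U^f;\Q)$ is a decomposition of MHS. Since $M$ is a finitely generated torsion module over the PID $R$, it is finite dimensional over $\Q$, so the Jordan--Chevalley decomposition is available and $t_{ss}$ is a $\Q$-linear operator (in fact a polynomial in $t$). A $\Q$-linear endomorphism of a $\Q$-MHS is a morphism if and only if its complexification is, so we may work over $\C$. Over $\C$, multiplication by a root of unity is always a morphism of $\C$-MHS, and $t_{ss}\otimes\C$ acts by the scalar $\zeta$ on the generalized $\zeta$-eigenspace $M_\zeta \subseteq M\otimes\C$; hence $t_{ss}$ is a MHS morphism if and only if every $M_\zeta$ is a sub-$\C$-MHS of $M\otimes\C$ (if $t_{ss}$ is a morphism then $M_\zeta = \ker(t_{ss}-\zeta)$ is a sub-MHS by strictness, and the converse is immediate). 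So it suffices to prove that $M\otimes\C = \bigoplus_{\zeta^N=1}M_\zeta$ is a direct sum of sub-MHS.

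I would treat the eigenvalue $1$ summand first. On $M_1$ the operator $t$ is already unipotent, so $\log t$ --- not just $\log t^N$ --- is defined directly, and one can run the construction of \cite{EGHMW} for the pair $(U,f)$ itself rather than for the auxiliary finite cover on which $t^N$ becomes unipotent; this produces a MHS on $M_1$ intrinsically on $U$, which is the cover-free construction advertised in the abstract. The key point is then to identify this intrinsic MHS with the subspace structure that $M_1$ inherits from $M$, i.e.\ to show that $M_1\hookrightarrow M$ underlies a morphism of MHS. I would do this by comparing the complexes defining the two structures: the defining complex on the finite cover pushes forward (the push-forward being exact, since the cover is finite) to a complex on $U$, the eigenvalue $1$ part is precisely its invariant part under the deck group, and by Galois descent this invariant part is the complex defining the cover-free MHS.

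For an eigenvalue $\zeta$ of exact order $d>1$ the cover is genuinely needed: passing to the degree-$d$ cyclic cover $c_d\colon U_d\to U$ on which $f$ admits a $d$-th root $f_d$, one has that $t^d$ is unipotent on $M_\zeta$ (indeed on the whole $\zeta^d=1$ part of $M$), so $M_\zeta$ sits inside the unipotent-monodromy regime upstairs, where constructions of \cite{EGHMW} type apply; and the deck group $\mu_d$ of $c_d$ acts by \emph{geometric} automorphisms, hence by automorphisms of MHS, on the cohomology of $U_d$ (equivalently, on cohomology of $U$ with finite-order local system coefficients). One realizes $\bigoplus_{\mathrm{ord}(\zeta)=d}M_\zeta$ as a sum of nontrivial $\mu_d$-isotypic pieces, which are therefore sub-MHS; complexifying and refining gives that each individual $M_\zeta$ is a sub-MHS, and the same push-forward/isotypic compatibility as before identifies this MHS with the one inherited from $M$.

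Combining the two cases, $M\otimes\C = \bigoplus_\zeta M_\zeta$ is a direct sum of sub-$\C$-MHS, so $t_{ss}\otimes\C$ is a morphism of $\C$-MHS --- and an isomorphism, its inverse being $t_{ss}^{N-1}$ --- whence $t_{ss}$ is a MHS isomorphism over $\Q$. I expect the main obstacle to be exactly the comparison running through the middle two steps: proving that the MHS's built ``one eigenvalue at a time'', each using only a \emph{unipotent} monodromy logarithm on the appropriate (covering) space, genuinely reassemble to the single MHS of \cite{EGHMW} on $M$. Concretely this means showing that the weight and Hodge filtrations are compatible with push-forward of the defining complexes along the cyclic covers $c_d$ and with the resulting isotypic decomposition. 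This compatibility --- and not any formal manipulation of $t$, which by Theorem~\ref{thm:summary}(\ref{part:t}) cannot itself be a morphism of MHS unless $M$ is already semisimple --- is where the real content of the proof lies.
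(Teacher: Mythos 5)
Your core geometric input is the same as the paper's: on the finite cyclic cover where the monodromy becomes unipotent, the deck transformation $\cT$ is an algebraic automorphism, and this is what supplies the semisimple part. But your execution diverges in two ways, and one of them creates a real gap. First, the paper works on a \emph{single} cover $U_N$ (with $N$ killing the torsion in every degree) and writes down one explicit chain-level map $\wt\cM_t^{ss}\colon \omega\otimes 1\mapsto \cT^*\omega\otimes 1$ on the thickened de Rham complex $\cE^\bullet_{U_N}(\Im\tfrac{df_N}{f_N},m)$. Because $f_N\circ\cT=e^{2\pi i/N}f_N$, the closed form $\Im\tfrac{df_N}{f_N}$ is $\cT$-invariant, so Remark~\ref{remk:push} makes $\wt\cM_t^{ss}$ a morphism of mixed Hodge complexes. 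Then the Jordan--Chevalley \emph{uniqueness} argument (commutation, $(\wt\cM_t^{ss})^N=\id$, and $\wt\cM_t\circ(\wt\cM_t^{ss})^{-1}=$ multiplication by $e^{2\pi s_N/N}$, a unipotent unit of $R(N)_m$) identifies the induced map in cohomology with $t_{ss}$ without ever decomposing $M$ into eigenspaces. Your plan, by contrast, works eigenvalue by eigenvalue with a different degree-$d$ cover for each primitive order $d$, so you must also compare MHS's built from different covers and reassemble them --- a real step you yourself flag as ``the main obstacle.''

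Second, and more concretely: when you pass from ``$\mu_d$ acts by geometric automorphisms on $H^*(U_d)$'' to ``hence the isotypic pieces of $M_\zeta\subset M$ are sub-MHS,'' you are eliding the identification of the deck group action with something acting on the \emph{right} object. The MHS on $M=\Tors_R H^*(U;\ov\cL)$ is carried not by $H^*(U_d;\Q)$ but by the hypercohomology of the thickened complex of the rank-one $R(d)_m$-local system on $U_d$, and $\cT$ does \emph{not} preserve the pair $(U_d,f_d)$ (it scales $f_d$ by a root of unity). One must therefore spell out what map $\cT^*$ induces there, check that it underlies a morphism of mixed Hodge complexes (this is exactly what Remark~\ref{remk:push} handles), and then still prove that this geometric map realizes $t_{ss}$ rather than $t$ or some other twist of it. In the paper that last identification is precisely the content of the Jordan--Chevalley uniqueness computation: $t$ itself is realized by $\wt\cM_t\colon\omega\otimes 1\mapsto\cT^*\omega\otimes e^{2\pi s_N/N}$, and the unipotent factor $e^{2\pi s_N/N}$ is what separates the ``raw'' $t$ from the geometric $\cT^*$. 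Without that computation your argument proves that the deck group action is a MHS automorphism, not that $t_{ss}$ is one. So the idea is right, but the decisive step --- explicitly matching the geometric chain map against $t_{ss}$ at the level of the model that carries the MHS --- is missing, and the per-eigenvalue, per-cover organization makes that matching harder than it needs to be.
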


Theorem~\ref{thm:tss} gives an affirmative answer to \cite[Question 3]{EGHMW}, which asked if $t_{ss}$ was a MHS morphism. This question was motivated by the corresponding results for
the semisimple part of the monodromy operator acting on the cohomology of the Milnor fiber (see \cite[Section 14]{NA}), and, respectively, on the cohomology of the generic fiber of a proper family $f\colon U \rightarrow\Delta^*$ over a punctured disc (see \cite[Th\' eor\`eme 15.13]{NA}). In fact, the MHS on the torsion part of Alexander modules can be thought of as a global analogue of the MHS on the Milnor fiber, which makes this a very natural question. More precisely  it was shown in \cite[Corollary 7.2.4]{EGHMW} that, if $U=\C^n\backslash f^{-1}(0)$ and $f$ is a weighted homogeneous polynomial on $n$ variables, then the inclusion of the (global) Milnor fiber $F=f^{-1}(1)$ into $U$ lifts to an inclusion $F\hookrightarrow U^f$ which is a homotopy equivalence, and it induces a MHS isomorphism $H_j(F;k)\cong H_j(U^f,k)=\Tors_R H_j(U^f,k)$. 

As an immediate consequence of Theorem~\ref{thm:tss}, we get the following direct sum decomposition statement, which generalizes Corollary~\ref{cor:eigenspaces}. 

\begin{corollary}\label{cor:geneigenspaces}
Let $N$ be as in part~(\ref{part:N}) of Theorem~\ref{thm:summary}, and let $k = \Q$, $\R$ or $\C$. For $g\in k[x]$, let $\left(\Tors_R H_j(U^f;k)\right)_{g}$ be the kernel of $g(t_{ss}  )\colon \Tors_R H_j(U^f;k)\rightarrow \Tors_R H_j(U^f;k)$. Then, the inclusions give us the following $k$-MHS isomorphism.
$$
\Tors_R H_j(U^f;\Q)\cong \bigoplus_g \left(\Tors_R H_j(U^f;\Q)\right)_{g},
$$
where the sum runs over the monic irreducible factors of $x^N-1$ in $k[x]$.
\end{corollary}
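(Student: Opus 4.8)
The plan is to deduce Corollary~\ref{cor:geneigenspaces} from Theorem~\ref{thm:tss} by a purely formal argument, namely the very argument by which Corollary~\ref{cor:eigenspaces} is obtained from part~(\ref{part:t}) of Theorem~\ref{thm:summary}, with the semisimple operator $t_{ss}$ playing the role that $t$ plays there. Fix $k\in\{\Q,\R,\C\}$ and abbreviate $M=\Tors_R H_j(U^f;k)$; recall that $M$ is a finite-dimensional $k$-vector space, being a finitely generated torsion module over the PID $R=k[t^{\pm1}]$.

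First I would record that, by Theorem~\ref{thm:tss}, $t_{ss}$ is an endomorphism of $M$ in the $k$-linear abelian category $\mathrm{MHS}_k$ of $k$-mixed Hodge structures. Since $\mathrm{MHS}_k$ is $k$-linear and contains $\id_M$, the $k$-subalgebra $k[t_{ss}]\subseteq\mathrm{End}_k(M)$ consists of morphisms of $k$-MHS; in particular $g(t_{ss})$ is a morphism of $k$-MHS for every $g\in k[x]$, and hence $M_g:=\ker\!\big(g(t_{ss})\colon M\to M\big)$ is a sub-MHS of $M$, kernels of MHS morphisms being sub-MHS.

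Next I would establish the underlying vector-space decomposition. By part~(\ref{part:N}) of Theorem~\ref{thm:summary}, $t^N$ is unipotent; since $t=t_{ss}t_u$ is a Jordan--Chevalley decomposition, so is $t^N=t_{ss}^N t_u^N$, and uniqueness of such decompositions forces $t_{ss}^N=\id_M$. As $\mathrm{char}\,k=0$, the polynomial $x^N-1$ is separable and factors as $x^N-1=\prod_i g_i$ with the $g_i$ pairwise distinct monic irreducibles in $k[x]$; since $\prod_i g_i(t_{ss})=t_{ss}^N-\id_M=0$ and the $g_i$ are pairwise coprime, the Chinese remainder theorem (primary decomposition) gives a direct sum decomposition of $k$-vector spaces (equivalently of $R$-modules)
$$
M=\bigoplus_i M_{g_i},\qquad M_{g_i}=\ker g_i(t_{ss}),
$$
which, $t_{ss}$ being semisimple, is the honest eigenspace-type decomposition indexed by the monic irreducible factors of $x^N-1$ in $k[x]$.

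Finally I would assemble the two ingredients: each inclusion $M_{g_i}\hookrightarrow M$ is a morphism of $k$-MHS, so the induced map $\bigoplus_i M_{g_i}\to M$ is a morphism of $k$-MHS which is bijective on underlying vector spaces, and a morphism in $\mathrm{MHS}_k$ that is bijective on underlying spaces is an isomorphism of MHS (its kernel and cokernel vanish in the abelian category $\mathrm{MHS}_k$). This yields the asserted $k$-MHS isomorphism. I do not expect a genuine obstacle here — all the substance is in Theorem~\ref{thm:tss}; the only points requiring (routine) care are the uniqueness of the Jordan--Chevalley decomposition, used to get $t_{ss}^N=\id$, and the standard fact that a bijective morphism of MHS is an isomorphism. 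It is worth emphasizing that, unlike in Corollary~\ref{cor:eigenspaces}, no semisimplicity hypothesis on $M$ itself is needed, precisely because the argument uses $t_{ss}$ rather than $t$.
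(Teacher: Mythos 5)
Your proof is correct and spells out precisely the ``immediate consequence'' argument the paper has in mind (the paper itself gives no explicit proof, just as for Corollary~\ref{cor:eigenspaces}): pass from Theorem~\ref{thm:tss} to $g(t_{ss})$ being a MHS morphism, use uniqueness of the Jordan--Chevalley decomposition to get $t_{ss}^N=\id$, and invoke the CRT decomposition together with the fact that a bijective MHS morphism is an isomorphism. No gaps; this matches the paper's approach.
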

\begin{remk}\label{remk:decomp}
Note that $\ker g(t_{ss}  )=\ker g(t)^m$ for big enough $m$, so the decomposition in Corollary~\ref{cor:geneigenspaces} is the decomposition into generalized eigenspaces (over $k$).
\end{remk}

\begin{remk}
In the local Milnor fiber setting, the fact that the semisimple part of the monodromy is a MHS isomorphism yields a powerful invariant: the spectrum of a hypersurface singularity, first defined by Steenbrink \cite{SteenbrinkSpectrum}. It is a polynomial with fractional exponents, and the coefficients are defined using the Hodge filtration of the cohomology of the Milnor fiber at a singular point in each of the different generalized eigenspaces under the action of $t_{ss}  $. The spectrum is related to other invariants, such as the jumping coefficients of multiplier ideals \cite{budurspectrum,budursaito}, the V-filtration of Malgrange and Kashiwara \cite{budursaito2}, and Bernstein-Sato polynomials \cite{ELSV}. Also, while the Betti numbers of the Milnor fiber of a central hyperplane arrangement are not known to be combinatorial invariants, its spectrum is \cite{budursaito2,budursaito}. Similarly, we hope that Theorem~\ref{thm:tss} and its proof shed some light on possible global analogues of the spectrum.
\end{remk}

The generalized eigenspace decomposition of Corollary~\ref{cor:geneigenspaces} motivates the study of the direct summands appearing in it. A particularly interesting one is $\left(\Tors_R H_j(U^f;k)\right)_1$, defined to be the generalized eigenspace corresponding to the eigenvalue $1$. First of all, $\Tors_R H_j(U^f;k)$ is always an isomorphic MHS  to $\left(\Tors_R H_j(U'^{f'};k)\right)_1$ for suitably chosen $(U',f')$ (see Lemma~\ref{233}). Secondly, the definition of the MHS on $\Tors_R H_j(U^f;k)$ requires the explicit knowledge of $N$ from Theorem~\ref{thm:summary}, part \eqref{part:N}, as it makes use of an $N$-sheeted covering space of $U$. This specific value of $N$ can be very hard to obtain in practice if $j>1$. However, we prove that to compute the MHS on $(\Tors_R H_j(U^f;k))_1$, knowledge of $N$ (or a finite cover of $U$) is not needed (see Remark~\ref{remk:noN} and Corollary~\ref{cor:noCoverNeeded}). We hope that this result will lead to computational techniques for the MHS on this generalized eigenspace.

To illustrate this point, under some purity conditions on the MHS on the cohomology of $U$, we are able to prove the following purity result for $\left(\Tors_R H_j(U^f;k)\right)_1$.
\begin{corollary}\label{cor:formal1}
Let $U$ be a smooth connected complex algebraic variety, let $f:U\rightarrow \C^*$ be an algebraic map inducing an epimorphism on fundamental groups, and let $r\geq 1$. Assume that for every integer $k\leq r$, $H^k(U)$ is pure of weight $2k$. Then, for all $j\leq r-1$:
\begin{itemize}

\item The MHS summand $(\Tors_R H_j(U^f;\Q))_1$ of $\Tors_R H_j(U^f;\Q)$ is pure of type $(-j,-j)$.
\item $(\Tors_R H_j(U^f;\Q))_1$ has dimension
\[
\sum_{l=0}^j (-1)^{l+j}\left(\dim_{\Q}H_l(U;\Q) - \rank_{\Q[t^{\pm 1}]}H_l(U^f;\Q)\right).
\]
\end{itemize}
\end{corollary}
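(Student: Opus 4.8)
The plan is to isolate the first bullet (purity) as the only substantial point, and to deduce the dimension formula from it essentially formally. For the latter I would first record an unconditional identity. Fix a finite CW structure on $U$; lifting it gives a $\Z$-equivariant CW structure on $U^f$, so that $C_\bullet(U^f;\Q)$ is a bounded complex of finitely generated free $R$-modules with $C_\bullet(U;\Q)=C_\bullet(U^f;\Q)\otimes_R\Q$, where $\Q=R/(t-1)$. The universal coefficient theorem over the PID $R$ then yields, for each $l$, a short exact sequence
\[
0\to H_l(U^f;\Q)\otimes_R\Q\to H_l(U;\Q)\to\operatorname{Tor}_1^R(H_{l-1}(U^f;\Q),\Q)\to 0.
\]
Writing $H_l(U^f;\Q)=R^{a_l}\oplus\Tors_R H_l(U^f;\Q)$ with $a_l=\rank_R H_l(U^f;\Q)$ and decomposing $\Tors_R H_l(U^f;\Q)$ into generalized eigenspaces, one finds $H_l(U^f;\Q)\otimes_R\Q=\Q^{a_l}\oplus\coker(t-1\mid(\Tors_R H_l(U^f;\Q))_1)$ and $\operatorname{Tor}_1^R(H_l(U^f;\Q),\Q)=\ker(t-1\mid(\Tors_R H_l(U^f;\Q))_1)$, since $t-1$ is invertible on the remaining eigenspaces. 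As $t-1$ is nilpotent on $(\Tors_R H_l(U^f;\Q))_1$, its kernel and cokernel there have the same dimension $\mu_l$, equal to the number of Jordan blocks, so $\dim_\Q H_l(U;\Q)-\rank_R H_l(U^f;\Q)=\mu_l+\mu_{l-1}$ for every $l$; hence the alternating sum in the statement telescopes to $\mu_j$.

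It follows that the second bullet is a consequence of the first, because purity forces $t$ to act as the identity. Concretely, for $j\leq r-1$: $(\Tors_R H_j(U^f;\Q))_1$ is a sub-MHS of $\Tors_R H_j(U^f;\Q)$ (Corollary~\ref{cor:geneigenspaces}) and is stable under $\log(t^N)$, so by Theorem~\ref{thm:summary}\eqref{part:log} the map $\log(t^N)\colon(\Tors_R H_j(U^f;\Q))_1\to(\Tors_R H_j(U^f;\Q))_1(-1)$ is a morphism of MHS. Granting that $(\Tors_R H_j(U^f;\Q))_1$ is pure of type $(-j,-j)$, this is a morphism between pure Hodge structures of different weights, hence vanishes by strictness of MHS morphisms with respect to the weight filtration. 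Since $\log(t^N)=(t^N-1)u$ with $u$ invertible on this summand, $t^N=\id$ there; and a unipotent automorphism with $t^N=\id$ over $\Q$ is the identity. Thus every Jordan block has size one, $\mu_j=\dim_\Q(\Tors_R H_j(U^f;\Q))_1$, and the dimension formula follows from the identity above.

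To prove the purity statement I would use the description of the MHS on $(\Tors_R H_j(U^f;\Q))_1$ that requires no finite cover of $U$ (Remark~\ref{remk:noN}, Corollary~\ref{cor:noCoverNeeded}): it is computed from a thickened complex built out of $U$ and $f$ alone, carrying a weight filtration whose associated graded consists of shifts and Tate twists of the constant sheaf $\underline{\Q}_U$, so that the associated hypercohomology spectral sequence has first page assembled from the groups $H^k(U;\Q)$ with prescribed Tate twists. The plan is then to carry out the degree-and-twist bookkeeping and show that, for $j\leq r-1$, the only contributions to $(\Tors_R H_j(U^f;\Q))_1$ are copies of $H^k(U;\Q)$ placed in the single weight $-2j$. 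The hypothesis that $H^k(U)$ be pure of weight $2k$ for $k\leq r$ then makes these pieces pure of weight $-2j$, and it upgrades automatically to purity of Hodge type $(-j,-j)$: for any smooth $U$ the top weight graded piece $\operatorname{gr}^W_{2k}H^k(U;\Q)$ is of type $(k,k)$, being a subquotient of $H^0$ of the $k$-fold intersections of the boundary divisors in an SNC compactification, Tate-twisted by $-k$. Strictness then forces the spectral sequence to degenerate (its differentials would be morphisms of pure Hodge structures of distinct weights), so $(\Tors_R H_j(U^f;\Q))_1$ is an iterated extension of Hodge structures of type $(-j,-j)$, hence pure of that type. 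As a partial step one already has, from Theorem~\ref{thm:summary}\eqref{part:cover}, that $H_j(\pi)$ restricts to a morphism of MHS $(\Tors_R H_j(U^f;\Q))_1\to H_j(U;\Q)$ with kernel exactly $(t-1)(\Tors_R H_j(U^f;\Q))_1$, so under the hypothesis the quotient by that kernel is already pure of type $(-j,-j)$; the real content is to propagate purity across the unipotent part of $t$, which is what the cover-free description supplies.

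The step I expect to be the main obstacle is this last one: making the weight filtration on the thickened complex precise, identifying its graded pieces with Tate-twisted copies of $\underline{\Q}_U$ in the correct cohomological degrees, handling the inverse limit (pro-object) implicit in the eigenvalue-$1$ construction, and checking that the degree ranges line up so that purity of $H^{\leq r}(U)$ is exactly what is needed for $(\Tors_R H_j(U^f;\Q))_1$ with $j\leq r-1$. This one-degree loss is precisely the shift inherent in the universal coefficient (Wang-type) comparison between $U^f$ and $U$ used in the first step, which is a reassuring consistency check. The remaining ingredients — the universal coefficient bookkeeping, the telescoping, the strictness arguments, and the Hodge type of $\operatorname{gr}^W_{2k}H^k(U;\Q)$ — are routine.
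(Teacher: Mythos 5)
Your reduction of the dimension formula to the purity statement is sound and is essentially equivalent to the paper's Corollaries~\ref{cor:2} and~\ref{cor:3}: the unconditional identity $\dim_\Q H_l(U;\Q)-\rank_R H_l(U^f;\Q)=\mu_l+\mu_{l-1}$ telescopes to $\mu_j$, and the argument that purity of type $(-j,-j)$ forces $\log(t^N)$ (a MHS morphism into the weight $-2j+2$ Tate twist) to vanish on $(\Tors_R H_j(U^f;\Q))_1$, hence $t^N=\id$ and so $t=\id$ there, is correct. That gives $\mu_j=\dim(\Tors_R H_j(U^f;\Q))_1$.

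The genuine gap is in the first bullet, which you correctly flag as the crux but do not carry out, and the route you sketch has a concrete misstep: you describe the thickened complex as "carrying a weight filtration whose associated graded consists of shifts and Tate twists of the constant sheaf $\underline{\Q}_U$." This conflates the $s$-adic filtration (whose graded pieces are indeed copies of $\ov\cL_1\cong\underline{\R}_U$) with Deligne's weight filtration on $j_*\cE^\bullet_U$, whose graded pieces are Tate-twisted pushforwards from the boundary strata of a good compactification, not sheaves on $U$. The actual weight filtration on the thickened mixed Hodge complex of \cite[Section 4--5]{EGHMW} interlaces both, and it is not a formal spectral-sequence consequence that $\ker(s^m)\subset H^{j+1}(U;\ov\cL_m)$ lives in a single weight. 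In particular your own intermediate observation --- that Theorem~\ref{thm:summary}\eqref{part:cover} gives purity of the quotient $(\Tors_R H_j(U^f;\Q))_1/(t-1)$ --- is exactly as far as one gets without an extra input controlling the unipotent part, and that input does not appear in your sketch.

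The paper supplies it very differently (see the proof of Corollary~\ref{cor:formal}): purity of $H^k(U;\Q)$ for $k\leq r$ gives $r$-formality by Dupont's Theorem~\ref{thm:dupont}; the Budur--Liu--Wang result Proposition~\ref{prop:BLW} then says that $r$-formality forces $(\Tors_R H^{j+1}(U;\ov\cL))_1$, hence by Corollary~\ref{cor:dual} also $(\Tors_R H_j(U^f;\Q))_1$, to be a semisimple $R$-module for $j\leq r-1$; by Corollary~\ref{cor:2}, $H_j(\pi)$ is then injective on it, exhibiting it as a sub-MHS of $H_j(U;\Q)$, which is pure of type $(-j,-j)$ by Remark~\ref{remk:dupont}. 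The step you are missing is precisely this semisimplicity from formality; it is a theorem (due to Budur--Liu--Wang) and not something the weight bookkeeping of the thickened complex hands you for free. If you want to keep your cover-free viewpoint, you would still need to first establish semisimplicity --- equivalently, that the unipotent part $t_u$ is trivial --- and the formality route is what does this.
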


\begin{remk}[Examples of purity]\label{remk:pure}
Corollary~\ref{cor:formal1} for $r=\infty$ applies to hyperplane arrangement complements in $\C^n$ \cite{shapiro}, and toric arrangement complements in $(\C^*)^n$ \cite[Theorem 3.8]{dupont}. 
\end{remk}

The paper is structured as follows. In Section~\ref{s:background}, we recall the construction of the MHS on $\Tors_R H_j(U^f;k)$, some of its important properties, and set notation for the rest of the paper. Section~\ref{s:semisimple} is devoted to the proof of Theorem~\ref{thm:tss}. In Section~\ref{s:eigen1}, we prove that the MHS on $\left(\Tors_R H_j(U^f;k)\right)_1$ does not depend on finite covers of $U$. Finally, in Section~\ref{s:formal}, we prove Corollary~\ref{cor:formal1} using results of Dupont \cite{dupont} and Budur-Liu-Wang \cite{budurliuwang}, discuss the formula for the dimension of $\left(\Tors_R H_j(U^f;k)\right)_1$ (Remark~\ref{remk:rank}) in the cases of Remark~\ref{remk:pure}, and talk about the MHS on Alexander modules of hyperplane arrangement complements in more detail.

\section*{Acknowledgements}
The authors would like to thank Christian Geske, Lauren\c{t}iu Maxim and Botong Wang, since this paper originated from an open problem posed in our joint paper \cite{EGHMW} and from the many fruitful discussions that we have had regarding this topic. The first author would also like to thank Mircea Musta\c{t}\u{a} and Alex Suciu for useful conversations. Both authors were partially supported by individual AMS-Simons Travel Grants during the preparation of this paper.

\section{Background}\label{s:background}

\subsection{Mixed Hodge structures over \texorpdfstring{$\Q$, $\R$ and $\C$}{Q, R and C}}
We will recall the definition of Hodge structures (see \cite{elzein2013mixed}), including the simpler, but less prevalent, $\C$-Hodge structures. All vector spaces in the definitions are finite dimensional, and all filtrations are biregular, i.e. for every filtration on a vector space $V$, there are pieces that equal $0$ and $V$.
\begin{dfn}
Let $w\in \Z$. In this definition, let $k=\Q$ or $\R$.
\begin{itemize}
\item A $\C$-Hodge structure of weight $w$ is a triple $(V,F,F')$ consisting of a $\C$-vector space $V$ and two decreasing filtrations $F,F'$, such that $V = \bigoplus_{p+q = w} F^p\cap F'^q$.
\item A $k$-Hodge structure of weight $w$ is a pair $(V,F)$ consisting of a $k$-vector space and a decreasing filtration $F$ of $V\otimes_k\C$, such that $(V\otimes_k \C,F,\overline F)$ is a $\C$-Hodge structure of weight $w$, where $\ov \cdot$ denotes complex conjugation.
\item A $\C$-mixed Hodge structure (MHS) is a quadruple $(V,W,F,F')$ consisting of a $\C$-vector space $V$, an increasing filtration $W$ and two decreasing filtrations $F,F'$, such that for every $w\in \Z$, $(\Gr_w^W V,\Gr_w^W F,\Gr_w^W F')$ is a Hodge structure of weight $w$, and $\Gr_w^W F$ denotes the filtration induced by $F$ on $\Gr_w^W V$ (similarly for $F'$).
\item A $k$-MHS is a triple $(V,W,F)$ consisting of a $k$-vector space $V$, an increasing filtration $W$ of $V$ and a decreasing filtration $F$ of $V\otimes_k\C$, such that for every $V$, $F$ induces on $\Gr^W_w V$ a Hodge structure of weight $w$.
\item For all of the above, a morphism is a linear map defined over the base field and preserving the filtrations.
\end{itemize}
\end{dfn}

\begin{remk}\label{remk:QRC}
Let us collect the essential facts that we will use. The following statements are straightforward. Recall that for a field extension $k\subseteq k'$, a $k$-structure on a $k'$-vector space $V$ is a sub-$k$-vector space $V_{k}$ such that $V = V_{k}\otimes_{k} k'$. If $L\subset V$ is a $k'$-subspace, we say that it is defined over $k$ if it equals $L_k\otimes_k k'$ for some sub-$k$-vector space $L_k\subseteq V_k$, which is necessarily unique.
\begin{itemize}

\item Tensoring induces forgetful functors from MHS over $\Q$ to $\R$, and from $\R$ to $\C$.
\item A $\Q$-MHS is equivalent to an $\R$-MHS $(V,W,F)$ with the additional data of a $\Q$-structure on $V$ such that $W$ is defined over $\Q$.
\item A $\Q$ (resp. $\R$)-MHS is equivalent to a $\C$-MHS $(V,W,F,F')$ with the additional data of a $\Q$ (resp. $\R$)-structure on $V$ such that $W$ is defined over $\Q$ (resp. $\R$) and $F'=\ov F$.
\item Let $k,k'\in \{\Q,\R,\C\}$ and $k\subset k'$. Let $V_1,V_2$ be two $k$-MHS's. Tensoring induces a bijection
\[
\Hom_{k-\mathrm{MHS}}(V_1,V_2) \cong 
\Hom_{k'-\mathrm{MHS}}(V_1 \otimes_k k',V_2 \otimes_k k') \cap \Hom_{k-\mathrm{linear}}(V_1,V_2),
\]
i.e. a linear map is a MHS morphism if and only if its base change is a MHS morphism.
\end{itemize}
\end{remk}

\subsection{Alexander modules and notation}

Let $\cL$ be the rank 1 local system of $R$-modules on $U$ defined as $\cL=\pi_!\underline k_{U^f}$, which has monodromy representation
$$
\begin{array}{ccc}
\pi_1(U)&\longrightarrow & \Aut_R(R)\\
\gamma& \longmapsto & \cdot t^{f_*(\gamma)}.
\end{array}
$$
Then $H_j(U;\cL)$ is canonically isomorphic as an $R$-module to $H_j(U^f;k)$ \cite[Theorem 2.1]{KL}, and $\left(\Tors_R H_j(U;\cL)\right)^{\vee_k}$ is canonically isomorphic as an $R$-module to $\Tors_R H^{j+1}(U^f;\ov\cL)$ for all $j$ \cite[Proposition 2.4.1]{EGHMW}, where $\vee_k$ denotes the dual as a $k$-vector space, and $\ov\cL\coloneqq\cL\otimes_{t\mapsto t^{-1}}R$ denotes the conjugate local system to $\cL$. The MHS on $\Tors_R H_j(U^f;k)$ from Theorem~\ref{thm:summary} is the dual MHS of the MHS on $\Tors_R H^{j+1}(U^f;\ov{\cL})$ constructed in \cite[Theorem 5.4.10]{EGHMW} through these isomorphisms. This discussion prompts the following definition.

\begin{dfn}
The $j$-th cohomology Alexander module of $(U,f)$ is $H^j(U,\ov\cL)$.
\end{dfn}

\begin{remk}\label{remk:dual}
By the discussion above, Theorem~\ref{thm:summary} and Corollary~\ref{cor:eigenspaces} have a cohomological analogue for $\Tors_R H^{j+1}(U;\ov\cL)$ by taking $k$-duals. The results in this paper can also be stated for both homology and cohomology Alexander modules using the canonical $R$-module isomorphism $\left(\Tors_R H_j(U;\cL)\right)^{\vee_k}\cong \Tors_R H^{j+1}(U^f;\ov\cL)$ (see Corollary~\ref{cor:dual}).
\end{remk}

In Sections~\ref{s:unip} through \ref{s:MHS}, we recall how the MHS on $\Tors_R H^{j+1}(U;\ov\cL)$ was constructed in \cite{EGHMW} (which is also summarized in \cite{EGHMWSurvey}). All the claims that appear were proved in \cite{EGHMW}. This is just a summary of the construction, for the purposes of fixing notation.

\subsection{Step 1: reducing to the unipotent case.}\label{s:unip}
Let $N$ such that $(t^N-1)^m$ acts as $0$ on $\Tors_R H^{j}(U;\ov\cL)$ for some $m$ and all $j$, given by Theorem~\ref{thm:summary}, part (\ref{part:N}). Consider the following pullback diagram:
$$
\begin{tikzcd}
 U_N = \{(x,z)\in U\times \C^*\mid f(x) = z^N \}\arrow[d,"p"]\arrow[dr,phantom,very near start, "\lrcorner"] \arrow[r,"f_N"] & \C^*\arrow[d,"z\mapsto z^N"] \\
 U \arrow[r,"f"] & \C^*.
\end{tikzcd}
$$
Here $p$ is an $N$-sheeted cyclic cover and this is a diagram of algebraic maps between smooth varieties. We can then define, as in Section~\ref{s:intro},  $(U_N)^{f_N}$, $(f_N)_\infty$, $\pi_N$ and $\cL_N$  for the map $f_N\colon U_N\to \C^*$. We also define:
\[
\f{\theta_N}{U^f}{(U_N)^{f_N}}{U\times \C\ni (x,z)}{(x,e^{z/N}, z/N)\in U_N\times \C\subset U\times \C^* \times \C,}
\]
which fits into the following commutative diagram:
\begin{equation}\label{eq:UN}
\begin{tikzcd}[column sep = 7em]
U^f \arrow[d,"f_\infty"]\arrow[r,"\sim"',"\theta_N", dashed] 
\arrow[rrr,"\pi",rounded corners,to path=
{ --([yshift = 0.7em]\tikztostart.north)
-- ([yshift = 0.7em]\tikztotarget.north)\tikztonodes
-- (\tikztotarget)}] 
&
(U_N)^{f_N} \arrow[dr,phantom,very near start, "\lrcorner"] \arrow[d,"(f_N)_\infty"]\arrow[r,"\pi_N"]\arrow[dr,phantom,very near start] &
U_N \arrow[d,"f_N"] \arrow[dr,phantom,very near start, "\lrcorner"] \arrow[r,"p"]\arrow[dr,phantom,very near start] &
U \arrow[d,"f"] \\
\C \arrow[r,"z\mapsto\frac{z}{N}"]\arrow[rrr,"\exp", rounded corners,to path=
{ --([yshift = -0.7em]\tikztostart.south)
-- ([yshift = -0.7em]\tikztotarget.south)\tikztonodes
-- (\tikztotarget)}] &
\C \arrow[r,"\exp"]&
\C^* \arrow[r,"z\mapsto z^N"]&
\C^*.
\end{tikzcd}
\end{equation}
The map $\theta_N$ allows us to identify $U^f$ with $(U_N)^{f_N}$ in a canonical way, which we will do from now on. In particular, we can also identify the constant sheaves $\ul k_{(U_N)^{f_N}}$ and $\ul k_{U^f}$ canonically.

Let $R(N)\coloneqq k[t^{N},t^{-N}]$. Since the deck group of the infinite cyclic cover $\pi_N$ is generated by $t^N$, the corresponding Alexander modules $H_i((U_N)^{f_N};k)$ are finitely generated $R(N)$-modules. Since $R$ is a rank $N$ free $R(N)$-module, we can also consider $\calL$ as a local system of rank $N$ free $R(N)$-modules on $U$. Moreover, $\theta_N$ induces an isomorphism $\theta_{ \ovLn }\colon  p_* \ovLn \cong \ov\cL$ of local systems of $R(N)$-modules, which can be further used to prove the following (cf. \cite[Proposition 2.6.3]{EGHMW}).
\begin{lemma}\label{233}
\label{lemLocal}
In the above notations, $\theta_N$ induces the following canonical isomorphisms of $R(N)$-modules:
\[\Tors_R H^{i}(U;\ov\cL) \cong \Tors_{R(N)}H^{i}(U_N; \ovLn ) \]
for any integer $i \geq 0$. Note that $t^N$ acts unipotently on $\Tors_{R(N)}H^{i}(U_N; \ovLn )$.
\end{lemma}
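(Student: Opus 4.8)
The plan is to deduce the lemma formally from three ingredients: the fact that $p\colon U_N\to U$ is a finite covering map, the isomorphism $\theta_{\ovLn}\colon p_*\ovLn\cong\ov\cL$ of local systems of $R(N)$-modules that $\theta_N$ produces, and an elementary comparison of the torsion functors over $R$ and over $R(N)$. Everything below is essentially a repackaging of the construction of $\theta_N$ together with part~(\ref{part:N}) of Theorem~\ref{thm:summary}.

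\emph{First}, I would push forward along $p$. Since $p$ is a finite covering map it is a finite morphism, so $R^qp_*$ vanishes for $q>0$ on every sheaf (the stalk at a point $x$ computes the cohomology of the finite discrete fiber $p^{-1}(x)$); hence $p_*$ is exact and computes cohomology, and the Leray edge map gives a canonical isomorphism $H^{i}(U;p_*\cF)\cong H^{i}(U_N;\cF)$ for every sheaf $\cF$ on $U_N$ and every $i$. Taking $\cF=\ovLn$ and composing with $H^{i}(U;\theta_{\ovLn})$ yields a canonical isomorphism
\[
H^{i}(U_N;\ovLn)\;\cong\;H^{i}(U;\ov\cL).
\]
I would then check that this is $R(N)$-linear: the left-hand side is an $R(N)$-module because $\ovLn$ is a sheaf of $R(N)$-modules; the right-hand side is the $R$-module $H^{i}(U;\ov\cL)$ with scalars restricted along $R(N)=k[t^{N},t^{-N}]\hookrightarrow R=k[t^{\pm 1}]$; and $\theta_{\ovLn}$ is, by construction, an isomorphism of sheaves of $R(N)$-modules, so it matches these two structures.

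\emph{Second}, I would show that for any $R$-module $M$ (in particular $M=H^{i}(U;\ov\cL)$) the torsion submodules over $R$ and over $R(N)$ coincide, as subsets of $M$. The inclusion $\Tors_{R(N)}M\subseteq\Tors_R M$ is immediate from $R(N)\subseteq R$. Conversely, if $rm=0$ with $0\neq r\in R$, then, since $R$ is free of rank $N$ — hence integral — over $R(N)$, the element $r$ satisfies a monic equation $r^{n}+a_{n-1}r^{n-1}+\cdots+a_0=0$ over $R(N)$, and because $R$ is a domain we may take $a_0\neq 0$; multiplying by $m$ gives $a_0 m=-(r^{n-1}+\cdots+a_1)\,rm=0$, so $m\in\Tors_{R(N)}M$. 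Combining this with the previous paragraph produces the asserted canonical isomorphism $\Tors_R H^{i}(U;\ov\cL)\cong\Tors_{R(N)}H^{i}(U_N;\ovLn)$.

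\emph{Finally}, the concluding assertion of the lemma is immediate: by the choice of $N$ at the start of Section~\ref{s:unip} (Theorem~\ref{thm:summary}, part~(\ref{part:N})), $(t^N-1)^m$ acts as $0$ on $\Tors_R H^{i}(U;\ov\cL)$ for some $m$; since the isomorphism just built is $R(N)$-linear and $t^N\in R(N)$, the same power $(t^N-1)^m$ acts as $0$ on $\Tors_{R(N)}H^{i}(U_N;\ovLn)$, i.e. $t^N$ is unipotent there. I do not expect a genuine obstacle in this lemma; the only step that requires an actual argument rather than unwinding definitions is the commutative-algebra identity $\Tors_R=\Tors_{R(N)}$, which is what reconciles the a priori different module structures appearing on the two sides of the claimed isomorphism.
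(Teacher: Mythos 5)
Your proposal is correct, and it is essentially the argument that underlies \cite[Proposition 2.6.3]{EGHMW}, which this paper cites rather than reproves. The two inputs you isolate are exactly what is needed: (i) $p$ is a finite covering, so $p_*$ is exact and the Leray edge map together with $\theta_{\ovLn}$ gives an $R(N)$-linear identification $H^i(U_N;\ovLn)\cong H^i(U;\ov\cL)$; and (ii) for the finite integral extension $R(N)\subset R$ of domains, an element of an $R$-module is $R$-torsion if and only if it is $R(N)$-torsion. The only spot worth tightening is your assertion that one ``may take $a_0\neq 0$'' in the monic relation: the cleanest justification is either that a monic annihilating polynomial of minimal degree in $R(N)[x]$ must have nonzero constant term (otherwise $r\cdot q(r)=0$ with $q$ monic of smaller degree, and $R$ being a domain forces $q(r)=0$), or, equivalently, to take the Cayley--Hamilton relation for multiplication by $r$ on the rank-$N$ free $R(N)$-module $R$, whose constant term is $\pm\mathrm{Nm}_{R/R(N)}(r)\neq 0$. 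With that gloss the commutative-algebra step is airtight, and the unipotence remark is an immediate consequence of the defining property of $N$ in Theorem~\ref{thm:summary}, part~(\ref{part:N}).
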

\subsection{Step 2: Isolating the torsion}\label{s:isotor}
\begin{remk}
In \cite{EGHMW,EGHMWSurvey} it was said that, without loss of generality, we had reduced the problem to endowing $\Tors_R H^*(U;\ov\cL)$ with a canonical MHS in the case where $t$ acts unipotently on $\Tors_R H^*(U;\ov\cL)$ (using Lemma~\ref{233}). This simplifies the notation. However, the role that the suitable finite cover $U_N$ plays in this reduction is key for the proof of Theorem~\ref{thm:tss}, so we recall the construction of the MHS on $\Tors_{R(N)} H^*(U_N; \ovLn )$ explicitly.
\end{remk}

Let $s_N=t^N-1$. For $m \in \mathbb{N}$, we set
\[ R(N)_m\coloneqq R(N)/(s_N)^mR(N),\quad \ov{\calL_N}\coloneqq \calL_N \otimes_{t^N\mapsto t^{-N}} R(N),
\]
and let $ (\ov{\calL_N})_m\coloneqq \ov{\calL_N} \otimes_{R(N)} R(N)_m $ be the corresponding rank one local system of $R(N)_m$-modules.
Since $s_N$ acts nilpotenly on $\Tors_{R(N)} H^*(U_N;\ov{\calL_N})$, there exists an integer $m\geq 1$ such that $(s_N)^m$ annihilates $\Tors_{R(N)} H^j(U_N;\ov{\calL_N})$, for all $j$. With the above notations, it can be seen (cf. \cite[Lemma 3.1.8,  Corollary 3.1.9]{EGHMW}) that the maps of sheaves
\[
\ov{\calL_N} \twoheadrightarrow (\ov{\calL_N})_m \xhookrightarrow{\cdot (s_N)^m} (\ov{\calL_N})_{2m}
\]
induce an exact sequence
\[
0 \to \Tors_{R(N)} H^*(U_N;\ov{\calL_N})  \to H^*(U_N;(\ov{\calL_N})_m) \xrightarrow{\cdot (s_N)^m}  H^*(U_N;(\ov{\calL_N})_{2m}).
\]
Hence, since $s_N$ and $\log(1+s_N)$ differ by multiplication by a unit in $R(N)_{2m}$,
\begin{equation}\label{ker} 
\Tors_{R(N)} H^*(U_N;\ov{\calL_N})  \cong \ker \left( H^*(U_N;(\ov{\calL_N})_m) \xrightarrow{\cdot \left(\log(1+s_N)\right)^m}  H^*(U;{(\ov{\calL_N})}_{2m}) \right)  .
\end{equation}

Our next goal is to endow each $H^i(U_N;(\ov{\calL_N})_m)$ with a canonical mixed Hodge structure for all $m \geq 1$, such that the map 
\[ H^*(U_N;(\ov{\calL_N})_m) \xrightarrow{\cdot \left(\log(1+s_N)\right)^m} H^*(U_N;(\ov{\calL_N})_{2m})(-m) \]
 is a morphism of mixed Hodge structures (where $(-m)$ denotes the $-m$th Tate twist). This will be achieved by resolving $(\ov{\calL_N})_m$ by a certain mixed Hodge complex. Note that such a mixed Hodge structure on $H^*(U_N;(\ov{\calL_N})_m)$  induces by equation \eqref{ker} a canonical mixed Hodge structure on $\Tors_{R(N)} H^*(U_N;\ov{\calL_N})$ (independent of $m\gg 0$ by \cite[Corollary 5.4.5]{EGHMW}), which by Lemma~\ref{lemLocal} induces a canonical mixed Hodge structure on $\Tors_{R} H^*(U;\ov{\calL})$ (which, by \cite[Theorem 5.4.8]{EGHMW}, does not depend on the choice of suitable $N$).

\subsection{Step 3: Resolving \texorpdfstring{$(\ov{\calL_N})_m$}{L N m} by a mixed Hodge complex}\label{s:thickldr}

Deligne used a mixed Hodge complex of sheaves to endow the cohomology of any smooth algebraic variety $V$ with a canonical mixed Hodge structure \cite{De2}. See \cite[II.4.11]{peters2008mixed} (or \cite[Section 2.9]{EGHMW} for the version over $\R$ used in this paper) for the precise definition of this mixed Hodge complex of sheaves.

\begin{remk}\label{remk:MHComplex}
We are not going to recall the definition of mixed Hodge complex of sheaves here. For our purposes, we only need to know that 
\begin{enumerate}
\item an $\R$-mixed Hodge complex of sheaves consists on several pieces of data, some of which are a complex of sheaves of real vector spaces (which we will call the ``real part'') endowed with an increasing (weight) filtration, and a complex of sheaves of $\C$-vector spaces (which we will call the ``complex part'') endowed with both an increasing (weight) and a decreasing (Hodge) filtration. 
\item $\R$-mixed Hodge complexes of sheaves endow the hypercohomology of their real part (a complex of sheaves of $\R$-vector spaces) with an $\R$-mixed Hodge structure.
\item morphisms of mixed Hodge complexes of sheaves induce morphisms of MHS in hypercohomology.
\end{enumerate}
\end{remk}

In Deligne's $\R$-mixed Hodge complex of sheaves, the real part is $j_*\cE^\bullet_V$, where $\cE^\bullet_V$ is the de Rham complex of real differential forms, and $j:V\hookrightarrow X$ is the inclusion of $V$ into a \textit{good compactification} $X$, that is, a smooth compactification such that $X\setminus V$ is a normal crossings divisor.

In particular, $j_*\cE^\bullet_V$ is canonically identified with $Rj_*\underline{\R}_V$, so Deligne's mixed Hodge complex of sheaves endows $H^*(V;\R)\cong \mathbb{H}^*(V, j_*\cE^\bullet_V)$ with a canonical MHS.

In \cite{EGHMW}, $( \ovLn )_m$ (or rather $Rj_*( \ovLn )_m$, where $j$ is an inclusion of $U_N$ into a good compactification) was resolved by a complex of sheaves which is the real part of a mixed Hodge complex of sheaves, endowing $H^*(U_N;( \ovLn )_m)$ with a canonical MHS. We now recall that construction.

\begin{dfn}\label{dfn:thickening}
Let $(\calA,\wedge,d)=\ldots\xrightarrow{d}\calA^i \xrightarrow{d}\calA^{i+1}\xrightarrow{d}\ldots$ be a bounded below sheaf of commutative differential graded algebras (cdgas) on a topological space $X$.
For any $m \geq 1$, the {\it $m$-thickening} of a sheaf of $k$-cdgas $(\calA,\wedge,d)$ in the direction $\eta \in \Gamma(X, \calA^1) \cap \ker d$ is the cochain complex of $R{(N)}_m$-modules denoted by 
\begin{align*}
	\calA(\eta,m) = (\calA {\otimes_k} R(N)_m, d_\eta)
\end{align*}
 and described by:
\begin{itemize}
\item[(i)] for $p \in \Z$, the $p$-th graded component of $\calA(\eta,m)$ is {$\calA^p {\otimes_k} R(N)_m$.}
\item[(ii)] for $\omega \in \calA^i$ and $\phi \in R(N)_m$, we set $d_\eta(\omega \otimes \phi) = d\omega \otimes \phi + (\eta \wedge \omega) \otimes {s_N\phi} $.
\end{itemize}
\end{dfn}

Let $\ker d^0$ be the kernel of the $0$-th differential of $\calE^\bullet_{U_N}(\Im \frac{df_N}{f_N},m)$, where $\Im$ denotes the imaginary part. By \cite[Lemma 5.2.5]{EGHMW}, $\ker d^0$ is canonically isomorphic to $(\ov{\calL_N})_m$, as sheaves of $\R$-vector spaces. The isomorphism is given by the map
$$
\nu\colon (\ov{\calL_N})_m=\ov\calL_N\otimes_{R(N)} R(N)_m\xrightarrow{\cong} \ker\, d^0_m
$$
defined as follows (see \cite[Remark 5.2.6]{EGHMW} for details).

On any simply connected neighborhood $V$ of $x\in U_N$, $ \ovLn $ has the sections of $\pi_N\colon (U_N)^{f_N}\to U_N$ as a basis. Let $\iota\colon V\to (U_N)^{f_N}$ be one of these sections. Then $\nu$ is defined by the following formula, and extended $\R$-linearly:
\[
\nu (\iota\otimes 1) =\exp(-\Im (f_N)_\infty\circ \iota\otimes s_N).
\]
$\nu$ defines an isomorphism of sheaves. It does not respect the $R(N)_m$-module structure, but it satisfies equations
\begin{equation}\label{eqn:nulinear}
\begin{split}
\nu \left(a\cdot \frac{\log(1+s_N)}{2\pi}\right)&=\nu (a)\cdot s_N\\
\nu \left(a\cdot s_N\right)&=\nu (a)\cdot (\exp(2\pi s_N)-1)
\end{split}
\end{equation}
for all local sections $a\in (\ov\calL_N)_m$, where, $\log(1+s_N)$ and $\exp(2\pi s_N)$ are seen as  elements in $R(N)_m$ as truncated power series on $s_N$.

We can perform thickenings of mixed Hodge complexes of sheaves to obtain other mixed Hodge complexes of sheaves through the theory developed in \cite[Section 4]{EGHMW}. In particular, we can thicken Deligne's mixed Hodge complex of sheaves (see \cite[Section 5.4]{EGHMW}) to obtain a mixed Hodge complex of sheaves whose real part is $j_*\cE_{U_N}^\bullet(\Im \frac{df_N}{f_N},m)$. This endows $
H^*(U_N;(\ov{\calL_N})_m)
\cong
 \mathbb H^*\left(U_N,\cE_{U_N}^\bullet(\Im \frac{df_N}{f_N},m)\right)
 \cong
  \mathbb{H}^*\left(X,j_*\cE_{U_N}^\bullet(\Im \frac{df_N}{f_N},m)\right)$ with a canonical MHS, where the first isomorphism is induced by $\nu$.

\subsection{Step 4: Constructing the MHS on \texorpdfstring{$\Tors_R H^*(U;\ov\cL)$}{Tors R H*(U,L)}}\label{s:MHS}

In Section~\ref{s:thickldr}, we have constructed a canonical $\R$-MHS on $H^i(U_N;(\ov{\calL_N})_m)$. In \cite[Theorem 5.4.7]{EGHMW}, it is shown that this MHS does not depend on the choices made in its construction, namely the good compactification.

In Section~\ref{s:isotor}, we recalled that the maps of sheaves $\ov{\calL_N}\twoheadrightarrow (\ov{\calL_N})_m \xrightarrow{\cdot (s_N)^m} (\ov{\calL_N})_{2m}$ induce the exact sequence in cohomology
\begin{equation}\label{eq:isolateTorsion}
0\rightarrow \Tors_{R(N)}H^i(U_N;\ov{\calL_N})\rightarrow H^i(U_N;(\ov{\calL_N})_m)\xrightarrow{\cdot \left(\frac{\log(1+s_N)}{2\pi}\right)^m} H^i(U_N;(\ov{\calL_N})_{2m},
\end{equation}
for big enough $m$ (in fact, we just need that $(s_N)^m=(t^N-1)^m$ annihilates $\Tors_{R(N)}H^i(U_N;\ov{\calL_N})$).

Moreover, by \cite[Lemma 4.2.3]{EGHMW} and the way $\nu$ is defined,
$$H^i(U_N;(\ov{\calL_N})_m)\xrightarrow{\cdot \left(\frac{\log(1+s_N)}{2\pi}\right)^m} H^i(U_N;(\ov{\calL_N})_{2m}(-m)$$
is a MHS morphism (realized by multiplication by $(s_N)^m$ on the corresponding thickened Hodge-de Rham complexes of sheaves), where the domain and the target are endowed with the MHS described at the end of Section~\ref{s:thickldr}, and $(-m)$ denotes the $-m$th Tate twist. Hence, the natural projection $\ov{\calL_N}\twoheadrightarrow (\ov{\calL_N})_m$ endows $\Tors_{R(N)}H^i(U_N;\ov{\calL_N})$ with a canonical MHS.

Lastly, Lemma~\ref{lemLocal} provides the isomorphism induced by $\theta_N$ between $\Tors_{R}H^i(U;\ov{\cL})$ and $\Tors_{R(N)}H^i(U_N;\ov{\calL_N})$, which we can use to endow $\Tors_{R}H^i(U;\ov{\cL})$ with a canonical MHS.

\begin{remk}\label{remk:Q}
Note that Alexander modules are defined over $\Q$. In \cite[Theorem 5.4.10]{EGHMW}, it was proved that with this $\Q$-structure, $\Tors_R H^*(U;\ov{\cL})$ is a $\Q$-MHS. Remark~\ref{remk:QRC} implies that proving Theorem~\ref{thm:tss} over $\R$ implies the analogous statement over $\Q$ and $\C$.
\end{remk}

\begin{remk}\label{remk:noN}
Let $s=t-1$, and let $m\gg 0$. Let $\left(\Tors_{R} H^*(U;\ov{\calL})\right)_1$ be the generalized eigenspace of eigenvalue $1$. We can carry out the same construction from Sections~\ref{s:isotor} and~\ref{s:thickldr}, but on $U$ instead of $U_N$, replacing the ring $R(N)$ for $R$ and $s_N$ by $s$. We conclude by the same arguments that
$$
\left(\Tors_{R} H^*(U;\ov{\calL})\right)_1\cong \ker\left(H^*(U;\ov{\calL}_m)\xrightarrow{\cdot \left(\frac{\log(1+s)}{2\pi}\right)^m}H^*(U;\ov{\calL}_{2m})\right),
$$
which, following the arguments in this section but without passing to a finite cover, yields
$$
\left(\Tors_{R} H^*(U;\ov{\calL})\right)_1\cong \ker\left(\mathbb{H}^*\left(U,\cE^\bullet_U\left(\Im\frac{df}{f},m\right)\right)\xrightarrow{\cdot s^m}\mathbb{H}^*\left(U,\cE^\bullet_U\left(\Im\frac{df}{f},2m\right)\right)(-m)\right).
$$
Multiplication by $s^m$ above is a MHS morphism, so its kernel $\left(\Tors_{R} H^*(U;\ov{\calL})\right)_1$ is endowed with a MHS. In principle, this MHS on $\left(\Tors_{R} H^*(U;\ov{\calL})\right)_1$ is not necessarily the MHS that it has as a subspace of $\Tors_{R} H^*(U;\ov{\calL})$, whose construction we just recalled (it used a finite cover $U_N$). However, in Corollary~\ref{cor:noCoverNeeded}, we prove that those two MHS coincide. Hence, as anticipated in the introduction, the MHS on $\left(\Tors_{R} H^*(U;\ov{\calL})\right)_1$ can be computed without the knowledge of a particular suitable value of $N$ (or its corresponding finite cover $U_N$), whose existence is guaranteed by Theorem~\ref{thm:summary}, part~\eqref{part:N}.
\end{remk}
\subsection{The functoriality of the MHS on \texorpdfstring{$\Tors_R H^*(U;\ov\cL)$}{Tors R H*(U,L)}}
Let $g:U_1\rightarrow U_2$ be an algebraic map between smooth algebraic varieties, and let $f_i:U_i\rightarrow \C^*$ be an algebraic map inducing a surjection in fundamental groups for $i=1,2$. Suppose further that $f_2\circ g=f_1$. Let $\cL^i$ be the rank $1$ local system of free $R$-modules on $U_i$ induced by $f_i$. Note that $\cL^1=g^{-1}\cL^2$, so $\ov{\cL^1}=g^{-1}\ov{\cL^2}$. The map $\ov{\cL^2}\rightarrow Rg_*\ov{\cL^1}$ induced by the adjunction $\id\to Rg_*g^{-1}$ induces a map $\Tors_R H^*(U_2;\ov{\cL^2})\rightarrow \Tors_R H^* (U_1;\ov{\cL^1})$ which is the dual of the map in homology coming from $U_1^{g_1}\to U_2^{g_2}$ (\cite[Proposition 2.4.1]{EGHMW}).

Let $N\in\N$ such that the action of $t^N$ on $\Tors_R H^*(U_i;\ov{\calL^i})$ is unipotent, for $i=1,2$. The map $g$ lifts to a map $g_N:(U_1)_N\rightarrow (U_2)_N$ such that $\ov{\calL^1_N}=g_N^{-1}\ov{\calL^2_N}$. As explained in \cite[Section 4.5.1]{peters2008mixed}, it is possible to find smooth compactifications $X_i$ of $(U_i)_N$ such that $X_i\backslash (U_i)_N$ is a simple normal crossing divisor for $i=1,2$ and such that $g_N$ extends to $g_N: X_1\rightarrow X_2$. Let $j_i:(U_i)_N\hookrightarrow X_i$ be the inclusion for $i=1,2$.

Under these assumptions and notation, the content of \cite[Theorem 5.4.9]{EGHMW} is as follows:
\begin{thm}\label{thm:functorial}
The map $\Tors_R H^*(U_2;\ov{\cL^2})\rightarrow \Tors_R H^* (U_1;\ov{\cL^1})$ induced by adjunction is a morphism of MHS, realized by a morphism of (thickened) mixed Hodge complexes of sheaves for big enough $m$. The map between the real parts of these mixed Hodge complexes of sheaves is given by $R(j_2)_*$ of the map

\begin{equation}\label{eq:pullbackMap1}
\calE_{(U_2)_N}^{\bullet}\left(\Im\frac{d(f_2)_N}{(f_2)_N},m\right)\rightarrow (g_N)_*\left(\calE_{(U_1)_N}^{\bullet}\left(\Im\frac{d(f_1)_N}{(f_1)_N},m\right)\right),
\end{equation}
defined at each degree $i$ by the $R(N)_m$-linear map
\begin{equation}\label{eq:pullbackMap2}
\begin{array}{ccc}
\calE_{(U_2)_N}^i\otimes R(N)_m &\rightarrow &(g_N)_*\left(\calE_{(U_1)_N}^{i}\otimes R(N)_m\right)\\
\omega\otimes 1&\mapsto& (g_N)^*\omega\otimes 1.
\end{array}
\end{equation}
\end{thm}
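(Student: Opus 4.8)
The plan is to realize the adjunction map entirely at the level of thickened mixed Hodge complexes of sheaves, and then to check that it restricts to the claimed map on the torsion submodules. Recall from Sections~\ref{s:isotor}--\ref{s:MHS} that $\Tors_{R(N)}H^i(U_\ell;\ov{\cL^\ell_N})$ sits, via \eqref{eq:isolateTorsion}, as the kernel of $\cdot\left(\frac{\log(1+s_N)}{2\pi}\right)^m$ inside $H^i(U_\ell;(\ov{\cL^\ell_N})_m)$, whose MHS comes from the thickened Deligne mixed Hodge complex of sheaves with real part $(j_\ell)_*\cE_{(U_\ell)_N}^\bullet\left(\Im\frac{d(f_\ell)_N}{(f_\ell)_N},m\right)$. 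So it suffices to: (i) produce a morphism of thickened mixed Hodge complexes of sheaves inducing \eqref{eq:pullbackMap2} on real parts; (ii) check that the induced map on cohomology is the adjunction map, so that it carries torsion submodules into torsion submodules; and (iii) invoke independence of $m\gg 0$ and of the compactifications (\cite[Corollary 5.4.5, Theorem 5.4.7]{EGHMW}), together with Lemma~\ref{233}, to pass from $(U_\ell)_N$ back down to $U_\ell$.

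For (i), the key point is that $f_2\circ g=f_1$ forces the logarithmic forms to match under pullback, $g^*\frac{df_2}{f_2}=\frac{df_1}{f_1}$; lifting to $g_N\colon(U_1)_N\to(U_2)_N$ this gives $g_N^*\left(\Im\frac{d(f_2)_N}{(f_2)_N}\right)=\Im\frac{d(f_1)_N}{(f_1)_N}$, i.e.\ $g_N$ carries the thickening direction $\eta_2$ to $\eta_1$. Hence the $R(N)_m$-linear pullback $\omega\otimes\phi\mapsto g_N^*\omega\otimes\phi$ commutes with the thickened differentials $d_{\eta_\ell}$ of Definition~\ref{dfn:thickening}, since it commutes with $d$ and intertwines $\eta_2\wedge(-)$ with $\eta_1\wedge(-)$. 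On the underlying Deligne mixed Hodge complexes of sheaves --- formed using compatible good compactifications $X_1,X_2$ with $g_N$ extending to $X_1\to X_2$, which exist by \cite[Section 4.5.1]{peters2008mixed} --- the pullback is a morphism of mixed Hodge complexes of sheaves by Deligne's functoriality \cite{De2}. I would then feed this into the functoriality of the thickening construction of \cite[Section 4]{EGHMW}: thickening a morphism of mixed Hodge complexes of sheaves along compatible directions produces a morphism of thickened mixed Hodge complexes of sheaves whose real part is $R(j_2)_*$ of \eqref{eq:pullbackMap1}. By Remark~\ref{remk:MHComplex} this yields MHS morphisms $H^i(U_2;(\ov{\cL^2_N})_m)\to H^i(U_1;(\ov{\cL^1_N})_m)$, compatibly with the Tate-twisted multiplications by $(s_N)^m$ in \eqref{eq:isolateTorsion}.

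For (ii), I would identify this cohomology map with the one induced by adjunction via a local computation. Under the isomorphism $\nu\colon(\ov{\cL_N})_m\xrightarrow{\sim}\ker d^0$ of \eqref{eqn:nulinear}, the pullback on thickened complexes corresponds to $R(g_N)_*$ of the adjunction $\ov{\cL^2_N}\to R(g_N)_*\ov{\cL^1_N}$ tensored over $R(N)$ with $R(N)_m$: on a simply connected $V\subset(U_2)_N$ with a section $\iota\colon V\to(U_2)_N^{(f_2)_N}$, the canonical lift $\wt{g_N}$ of $g_N$ to the infinite cyclic covers coming from the fiber-product description \eqref{eq:UN} satisfies $(f_2)_{N,\infty}\circ\wt{g_N}=(f_1)_{N,\infty}$, and feeding this into the defining formula for $\nu$ shows that $g_N^*$ intertwines the two copies of $\nu$ with the adjunction map over $g_N^{-1}(V)$; patching over overlaps gives the global identification. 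Thus on cohomology the map is the adjunction map of \cite[Proposition 2.4.1]{EGHMW}. Finally, since $\cdot\left(\frac{\log(1+s_N)}{2\pi}\right)^m$ is $R(N)_m$-linear it commutes with the pullback, so the MHS morphism $H^i(U_2;(\ov{\cL^2_N})_m)\to H^i(U_1;(\ov{\cL^1_N})_m)$ sends $\ker\left(\cdot\left(\frac{\log(1+s_N)}{2\pi}\right)^m\right)$ into itself, i.e.\ by \eqref{eq:isolateTorsion} restricts to a MHS morphism $\Tors_{R(N)}H^i(U_2;\ov{\cL^2_N})\to\Tors_{R(N)}H^i(U_1;\ov{\cL^1_N})$; transporting along $\theta_N$ (which is compatible with $g$ and $g_N$) via Lemma~\ref{233} completes the argument.

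The main obstacle is the central claim of step (i): that the thickening construction is functorial at the level of the full mixed Hodge complex of sheaves data, i.e.\ that $g_N^*$ respects the weight and Hodge filtrations on the complex parts \emph{after} thickening, not merely the underlying differentials. This is precisely what the formalism of \cite[Section 4]{EGHMW} is built to handle, so the real work is to verify its hypotheses here: compatible good compactifications to which $g_N$ extends, the identity $g_N^*\eta_2=\eta_1$, and the fact that $g_N^*$ is filtered for Deligne's logarithmic mixed Hodge complex. A secondary subtlety is the local matching of step (ii): pinning down the lift $\wt{g_N}$ and the normalization of $\nu$ so that the pullback is \emph{exactly} the adjunction map, including its behavior on non-simply-connected overlaps.
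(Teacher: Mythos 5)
The paper does not prove this statement: it is explicitly presented as a recollection of \cite[Theorem 5.4.9]{EGHMW} (``the content of \cite[Theorem 5.4.9]{EGHMW} is as follows''), with no argument given in the present document. Your sketch is therefore a reconstruction of the \cite{EGHMW} proof rather than an alternative to anything in this paper, and as such it is essentially correct and follows the expected route. The three load-bearing points you identify --- (i) that $f_2\circ g=f_1$ lifts to $(f_2)_N\circ g_N=(f_1)_N$, hence $g_N^*\eta_2=\eta_1$, so the $R(N)_m$-linear pullback commutes with the thickened differentials $d_{\eta_\ell}$ and, via the thickening formalism of \cite[Section 4]{EGHMW} applied to Deligne's functorial mixed Hodge complexes on compatible good compactifications, gives a morphism of thickened mixed Hodge complexes of sheaves; (ii) that, through $\nu$, this pullback is intertwined with the adjunction map because the canonical lift $\wt{g_N}$ arising from the fiber-product presentation satisfies $(f_2)_{N,\infty}\circ\wt{g_N}=(f_1)_{N,\infty}$; and (iii) that commutation with $\cdot(s_N)^m$ preserves the torsion kernel and Lemma~\ref{233} transports the result back down to $U_\ell$ --- are exactly the ingredients one needs. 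The two subtleties you flag at the end (functoriality of the thickening at the level of filtered data, and the local normalization of $\nu$) are legitimately where the real verification lives, and they are handled by \cite[Section 4]{EGHMW} and \cite[Remark 5.2.6, Theorem 5.4.8]{EGHMW} respectively; a full write-up would spell out that on a base of simply connected opens $V\subseteq (U_2)_N$ one may patch the pointwise identity $\nu(\iota')=g_N^*\nu(\iota)$ to a global equality of sheaf maps, which disposes of the overlap concern.
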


\begin{remk}\label{remk:push}
Suppose that, with the above hypotheses, instead of having the condition $g^* f_2=f_1$ in  we have the weaker condition $f_2\circ g = \lambda\cdot f_1$ for some $\lambda\in \C^*$. In this case, $g^* \frac{df_2}{f_2}=\frac{df_1}{f_1}$, and the map described in {equations} \eqref{eq:pullbackMap1} and \eqref{eq:pullbackMap2} is still a morphism of mixed Hodge complexes, simply by applying Theorem~\ref{thm:functorial} to the maps $f_2$ and $\lambda f_1$.

This morphism between $m$-thickened de Rham complexes commutes with multiplication by $(s_N)^m$ into their $2m$-th thickenings (which allows us to isolate the torsion using {equation} \eqref{eq:isolateTorsion}), and thus it induces a MHS morphism between $\Tors_R H^*(U_2;\ov{\cL^2})$ and $\Tors_R H^*(U_1;\ov{\cL^1})$. The only difference with Theorem~\ref{thm:functorial} is that, in this case $g^{-1}\cL^2$ is non-canonically isomorphic to $\cL^1$, so the map between the torsion parts of the Alexander modules will not be the one induced by adjunction in general.
\end{remk}

\section{The semisimple part of the monodromy is a MHS morphism}\label{s:semisimple}

Let $N$ such that $t^N-1$ acts nilpotently on $\Tors_R H^{i}(U;\ov\cL)$ for all $i$. Recall the diagrams and notation from Section~\ref{s:unip}. We will often represent $\cL$ as the sheaf such that on every open set $V$, $\Gamma(V,\cL)$ is the $\R$-vector space with basis given by the set of sections of $\pi$ defined over $V$. Similarly, $\cL_N$ has the sections of $\pi_N$ as a basis.

Let $\cT$ generate the deck transformations of $\pi$, i.e. for $(x,z)\in U^f\subset U\times \C$, $\cT(x,z) = (x,z+2\pi i)$. By definition, $t\in R$ has an action on $\cL$ induced by $\cT$, i.e. it takes the local section $\iota$ such that $\iota(x) = (x,z)$ and maps it to $t\cdot\iota$, such that $(t\cdot\iota)(x)= (x,z+2\pi i)$. In other words, $t\cdot\iota = \cT\circ \iota$. Analogously, $t^N$ acts on sections of $\pi_N$ by composing with $\cT_N\coloneqq \theta_N\circ \cT^N \circ \theta_N^{-1}$, which is the generator of deck transformations of $\pi_N$.

$\cT$ descends to the deck transformation of $p$, which we will also denote $\cT\colon U_N\to U_N$. Also, we can use $\theta_N$ to define $\cT_N^{1/N}\coloneqq \theta_N\circ \cT \circ \theta_N^{-1}$, which is indeed a canonically chosen $N$-th root of $\cT_N$, and further it descends to $\cT$ acting on $U_N$.

$\cL$ and $\ov\cL$ are canonically identified, but the action of $t$ on $\ov\cL$ equals the action of $t^{-1}$ on $\cL$. That is, if $\iota\colon U\to U^f$ is seen as a local section of $\ov\cL$, $t\cdot \iota = \cT^{-1}\circ \iota$, and analogously for $ \ovLn $.

Let us recall the isomorphism $\theta_{ \ovLn }\colon p_* \ovLn \xrightarrow{\sim} \ov\cL$ from \cite[Lemma 2.6.3]{EGHMW}, which is responsible for the result of Lemma~\ref{lemLocal}.

For any open set $V\subseteq U$, we have an isomorphism 
$
\Gamma(p^{-1}(V); \ovLn ) \to \Gamma(V;p_* \ovLn )$ that we will denote $p_*$.

Let $x\in U$, and let $V$ be a simply connected neighborhood of $x$. Note that $\Gamma(p^{-1}(V);\ov{\cL_N})$ decomposes as a direct sum
$\bigoplus_{x'\in p^{-1}(x)}\Gamma(V_{x'};\ov{\cL_N})$
, where
$V_{x'}$ is a neighborhood of $x'$ and\linebreak 
$p:V_{x'}\rightarrow V$ 
is a homeomorphism for all $x'\in p^{-1}(x)$. Let $\wt x\in p^{-1}(x)$, and let 
$\iota\in \Gamma(V_{\wt x};\ov{\cL_N})$ 
be a local section of $\pi_N$ around $\wt x$. Seeing 
$\iota$ inside $\Gamma(p^{-1}(V);\ov{\cL_N})$
, we have that 
$p_*\iota\in \Gamma(V;p_*\ov{\cL_N})$. 
Note that all such $p_*\iota$ form a basis of $\Gamma(V;p_*\ov{\cL_N})$. Let $\iota_N$ be the local section of $p$ mapping $x$ to $\wt x$. Then $\theta_N^{-1}\circ \iota\circ \iota_N$ is a local section of $\pi$ around $x$, so we can define:

\[
\theta_{ \ovLn }(p_*\iota) = \theta_N^{-1}\circ \iota\circ \iota_N.
\]

\begin{lemma}\label{lem:t-liftstoEquiv}
There is a morphism of sheaves $\cM_t\colon  \ovLn  \to \cT_* \ovLn $ such that after taking $p_*$ it becomes multiplication by $t$, i.e. the following composition is multiplication by $t$:
\[
\ov\cL \xrightarrow[\sim]{\theta_{ \ovLn }^{-1}} p_* \ovLn  \xrightarrow{p_* \cM_t} p_* \cT_* \ovLn    = (p\circ \cT)_*  \ovLn  = p_* \ovLn  \xrightarrow[\sim]{\theta_{ \ovLn }} \ov\cL.
\]
$\cM_t$ is given by $\cM_t(\iota)= \cT_N^{-1/N} \circ \iota\circ \cT$.
\end{lemma}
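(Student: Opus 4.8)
The plan is to split the argument into two parts: verifying that the stated formula genuinely defines a morphism of sheaves $\cM_t\colon\ovLn\to\cT_*\ovLn$, and then checking on local sections that the displayed composition is multiplication by $t$.

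First I would check well-definedness. For a local section $\iota$ of $\pi_N$ over an open set $V\subseteq U_N$, the composite $\cT_N^{-1/N}\circ\iota\circ\cT$ is defined on $\cT^{-1}(V)$; since $\cT_N^{1/N}=\theta_N\circ\cT\circ\theta_N^{-1}$ descends to $\cT$ on $U_N$ we have $\pi_N\circ\cT_N^{-1/N}=\cT^{-1}\circ\pi_N$, so $\cT_N^{-1/N}\circ\iota\circ\cT$ is again a section of $\pi_N$, now over $\cT^{-1}(V)$, that is, an element of $\Gamma(V;\cT_*\ovLn)$. The formula is compatible with restriction, so extending $\R$-linearly over the local bases of $\ovLn$ and $\cT_*\ovLn$ given by sections of $\pi_N$ yields a morphism of sheaves $\cM_t$ (in fact $R(N)$-linear, as $\cT_N^{-1/N}$ commutes with $\cT_N$ and multiplication by $t^{\pm N}$ on $\ovLn$ is composition with $\cT_N^{\mp 1}$).

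Next I would evaluate the composition. Since $p\circ\cT=p$ we have $p_*\cT_*\ovLn=p_*\ovLn$, so the composition is an endomorphism of $\ov\cL$, and it is enough to test it on a basis element $p_*\iota$ of $\Gamma(V;p_*\ovLn)$, where $V\subseteq U$ is simply connected, $\wt x\in p^{-1}(x)$, $\iota$ is a section of $\pi_N$ over the sheet $V_{\wt x}$ of $p$ over $V$, and $\iota_N\colon V\to V_{\wt x}$ is the section of $p$ with $\iota_N(x)=\wt x$. The crucial point is that $\cM_t(\iota)=\cT_N^{-1/N}\circ\iota\circ\cT$ is a section of $\pi_N$ over the \emph{shifted} sheet $\cT^{-1}(V_{\wt x})=V_{\cT^{-1}\wt x}$, whose associated section of $p$ in the defining formula for $\theta_{\ovLn}$ is $\cT^{-1}\circ\iota_N$. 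Substituting and cancelling the resulting $\cT\circ\cT^{-1}$ gives
\[
\theta_{\ovLn}\bigl(p_*\cM_t(\iota)\bigr)=\theta_N^{-1}\circ\cT_N^{-1/N}\circ\iota\circ\iota_N=\cT^{-1}\circ\bigl(\theta_N^{-1}\circ\iota\circ\iota_N\bigr)=\cT^{-1}\circ\theta_{\ovLn}(p_*\iota),
\]
the middle equality coming from $\theta_N^{-1}\circ\cT_N^{-1/N}=\cT^{-1}\circ\theta_N^{-1}$, which is again immediate from $\cT_N^{1/N}=\theta_N\circ\cT\circ\theta_N^{-1}$. Since the $t$-action on $\ov\cL$ sends a local section $\kappa$ of $\pi$ to $\cT^{-1}\circ\kappa$, the last term equals $t\cdot\theta_{\ovLn}(p_*\iota)$, as wanted.

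The hard part is not the computation, which is a short chain of cancellations, but the bookkeeping in the last step: one must keep straight the three related deck transformations ($\cT$ on $U^f$, its descent $\cT$ on $U_N$, and the canonical $N$-th root $\cT_N^{1/N}$ of $\cT_N$ on $(U_N)^{f_N}$), track which sheet of $p^{-1}(V)$ each section lives over, and correctly recognize $\cT^{-1}\circ\iota_N$ as the section of $p$ that the defining formula for $\theta_{\ovLn}$ assigns to the sheet $V_{\cT^{-1}\wt x}$. It is also worth flagging that the inversions built into $\ov\cL$ and $\ovLn$, on which $t$ acts as $t^{-1}$ does on $\cL$ and $\cL_N$, are exactly what forces the exponent $-1/N$ in the formula for $\cM_t$.
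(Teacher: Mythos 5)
Your proposal is correct and follows essentially the same route as the paper: test the composition on a basis of local sections $\iota$ of $\pi_N$, identify the correct sheet $V_{\cT^{-1}\wt x}$ on which $\cM_t(\iota)$ lives so as to apply the defining formula for $\theta_{\ovLn}$ with the shifted section $\iota_N'=\cT^{-1}\circ\iota_N$, cancel $\cT\circ\cT^{-1}$, and conjugate $\cT_N^{-1/N}$ through $\theta_N^{-1}$ to produce $\cT^{-1}$, which is the $t$-action on $\ov\cL$. The only addition relative to the paper is your preliminary check that $\cM_t$ is genuinely a well-defined morphism of sheaves (valued in $\cT_*\ovLn$ and $R(N)$-linear), which the paper leaves implicit; this is a reasonable thing to make explicit and does not change the argument.
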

\begin{proof}
We want to see that the following diagram commutes (note that the vertical arrows are isomorphisms).
\[
\begin{tikzcd}[column sep = 7em, row sep = 2em]
p_* \ovLn  \arrow[r,dashrightarrow,"p_*\cM_t"]\arrow[d,"\theta_{ \ovLn }"]& p_* \ovLn \arrow[d,"\theta_{ \ovLn }"] 
\\
\ov\cL\arrow[r,"t"] & \ov\cL .
\end{tikzcd}
\]
Let $\iota\colon U_N\to (U_N)^{f_N}$ be a local section of $ \ovLn $ around $\wt x\in U_N$. Let $x=p(\wt x)$. We would like to check that $\theta_{ \ovLn }( p_*(\cM_t\iota)) = t\theta_{ \ovLn }(p_*\iota)$. Let $\iota_N$ be the local section of $p$ mapping $x$ to $\wt x$. Let $\iota_N'$ be the local section of $p$ mapping $x$ to $\cT^{-1}\wt x$. Note that $\iota_N'=\cT^{-1}\circ \iota_N$.

\begin{align*}
\theta_{ \ovLn }(p_*(\cM_t\iota)) 
&= 
\theta_{ \ovLn }(p_*(\cT_N^{-1/N}\circ \iota \circ \cT)) =&\text{(def. of $\cM_t$)}\\
&= 
\theta^{-1}_N\circ \cT_N^{-1/N}\circ \iota \circ \cT \circ \iota_N' =&\text{(def. of $\theta_{ \ovLn }$ at $\cT^{-1}\wt x$)}\\ 
&= \cT^{-1}\circ\theta^{-1}_N\circ  \iota \circ \cT \circ \iota_N'= & \text{(}\theta_N\circ \cT\circ \theta^{-1}_N= \cT_N^{1/N}\text{)}
\\ 
&= \cT^{-1}\circ\theta^{-1}_N\circ  \iota \circ  \iota_N= & \text{(}\iota_N =  \cT \circ \iota_N'\text{)}
\\
&= t\cdot  (\theta^{-1}_N\circ \iota \circ \iota_N)=
&\text{(def. of $t\cdot$)}
\\
&=
t\theta_{ \ovLn }(p_*\iota).
 &\text{(def. of $\theta_{ \ovLn }$ at $\wt x$)}\
\end{align*}

\end{proof}

\begin{remk}
We are going to look at the map that $\cM_t$ induces between $( \ovLn )_m$ and itself, namely $\cM_t\otimes_{R(N)}R(N)_m$. For simplicity in the notation, we will denote $\cM_t\otimes_{R(N)}R(N)_m$ also by $\cM_t$ from now on. 
\end{remk}

\begin{lemma}\label{lem:t-liftstodR}
Let $k=\R$. Under the quasi-isomorphism $\nu \colon  \ovLn \otimes_{R(N)}{R(N)_m} \to \cE^\bullet_{U_N}\left( \Im \frac{df_N}{f_N},m\right)$, the map $\cM_t$ (where $\cM_t$ was defined in Lemma~\ref{lem:t-liftstoEquiv}) becomes the following morphism of complexes of sheaves, defined $R(N)$-linearly as
\[
\f{\wt\cM_t}{\cE^\bullet_{U_N}\left( \Im \frac{df_N}{f_N},m\right)}{\cT_*\cE^\bullet_{U_N}\left( \Im \frac{df_N}{f_N},m\right)}{\omega\otimes 1}{\cT^*\omega\otimes e^{2\pi s_N/N}}
\]
In other words, the following diagram commutes:
\[
\begin{tikzcd}
 \ovLn \otimes_{R(N)}{R(N)_m} \arrow[r,"\cM_t"]\arrow[d,"\nu"] &
\cT_* \ovLn \otimes_{R(N)}{R(N)_m}\arrow[d,"\cT_*\nu"] \\
\cE^\bullet_{U_N}\left( \Im \frac{df_N}{f_N},m\right) \arrow[r,"\wt\cM_t"] &
\cT_*\cE^\bullet_{U_N}\left( \Im \frac{df_N}{f_N},m\right)
\end{tikzcd}
\]
\end{lemma}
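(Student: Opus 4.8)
The plan is to verify directly that the displayed square commutes and, separately, that $\wt\cM_t$ is a morphism of complexes, the latter being an easy consequence of the identity $\cT^*\eta=\eta$ for $\eta=\Im\frac{df_N}{f_N}$, which I establish below. For the square: since $\nu$ is only $\R$-linear, one cannot reduce to checking on generators over $R(N)_m$; instead I would fix a simply connected open $V\subseteq U_N$ and use that the elements $\iota\otimes 1$, with $\iota$ ranging over the local sections of $\pi_N$ over $V$, span $\Gamma(V;\ovLn\otimes_{R(N)}R(N)_m)$ over $\R$: fixing one such section $\iota_0$, the rest are the $(t^N)^k\iota_0$ with $k\in\Z$, and $(t^N)^k\iota_0\otimes 1=\iota_0\otimes(1+s_N)^k$, while $\{(1+s_N)^k\}_{k=0}^{m-1}$ is an $\R$-basis of $R(N)_m$. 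Since $\cT$ is a homeomorphism, $\cT^{-1}V$ is again simply connected and $\nu$ is defined there, so it suffices to prove
\[
(\cT_*\nu)\big(\cM_t(\iota\otimes 1)\big)=\wt\cM_t\big(\nu(\iota\otimes 1)\big)
\]
for each such $\iota$, where $\cM_t(\iota\otimes 1)=\cM_t(\iota)\otimes 1$ and $\cM_t(\iota)=\cT_N^{-1/N}\circ\iota\circ\cT$.

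The geometric input I need is an explicit description of the deck transformations involved. Writing a point of $(U_N)^{f_N}\subseteq U\times\C^*\times\C$ as $(x,w,\zeta)$ with $w=e^\zeta$ and $f(x)=w^N$, the formula $\theta_N(x,z)=(x,e^{z/N},z/N)$ gives
\[
\cT_N^{\pm 1/N}=\theta_N\circ\cT^{\pm 1}\circ\theta_N^{-1}\colon\quad (x,w,\zeta)\longmapsto\big(x,\;we^{\pm 2\pi i/N},\;\zeta\pm\tfrac{2\pi i}{N}\big),
\]
and correspondingly $\cT\colon U_N\to U_N$ sends $(x,w)$ to $(x,we^{2\pi i/N})$, so that $f_N\circ\cT=e^{2\pi i/N}f_N$ and hence $\cT^*\frac{df_N}{f_N}=\frac{df_N}{f_N}$; in particular $\cT^*\eta=\eta$, which is precisely what makes $\wt\cM_t$ commute with $d_\eta$. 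Now, given a local section $\iota$ of $\pi_N$ over $V$, write $(f_N)_\infty\circ\iota=\beta\colon V\to\C$ (so $e^\beta=f_N$ on $V$). Then $\cM_t(\iota)=\cT_N^{-1/N}\circ\iota\circ\cT$ is a local section of $\pi_N$ over $\cT^{-1}V$, and evaluating $\cT_N^{-1/N}$ on $\iota(\cT(y'))=\big(\cT(y'),\beta(\cT(y'))\big)$ for $y'\in\cT^{-1}V$ shows that $(f_N)_\infty\circ\cM_t(\iota)=(\beta\circ\cT)-\tfrac{2\pi i}{N}$.

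The last step is to substitute into $\nu(a\otimes 1)=\exp\!\big(-\Im((f_N)_\infty\circ a)\cdot s_N\big)$. On one side, using $\Im\big(-\tfrac{2\pi i}{N}\big)=-\tfrac{2\pi}{N}$,
\[
(\cT_*\nu)\big(\cM_t(\iota)\otimes 1\big)=\exp\!\Big(-\Im\big((\beta\circ\cT)-\tfrac{2\pi i}{N}\big)\cdot s_N\Big)=\exp\!\big(-\Im(\beta\circ\cT)\cdot s_N\big)\cdot e^{2\pi s_N/N}.
\]
On the other side, writing $\nu(\iota\otimes 1)=\exp(-\Im\beta\cdot s_N)=\sum_{k\ge 0}\tfrac{(-\Im\beta)^k}{k!}\otimes s_N^k$ and using that the defining rule $\omega\otimes 1\mapsto\cT^*\omega\otimes e^{2\pi s_N/N}$ together with $R(N)$-linearity forces $\wt\cM_t(\omega\otimes s_N^k)=\cT^*\omega\otimes s_N^k e^{2\pi s_N/N}$,
\[
\wt\cM_t\big(\nu(\iota\otimes 1)\big)=\Big(\sum_{k\ge 0}\tfrac{\cT^*(-\Im\beta)^k}{k!}\,s_N^k\Big)e^{2\pi s_N/N}=\exp\!\big(-\Im(\beta\circ\cT)\cdot s_N\big)\cdot e^{2\pi s_N/N},
\]
since $\cT^*\Im\beta=\Im(\beta\circ\cT)$. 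The two expressions coincide, so the square commutes and the lemma follows.

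The only genuine difficulty is bookkeeping: keeping straight the three spaces $U^f$, $U_N$ and $(U_N)^{f_N}$, the transformations $\cT$ (on $U^f$, and on $U_N$ via pushforward), $\cT_N$ and its canonical $N$-th root $\cT_N^{1/N}$, and the variance conventions hidden in the two occurrences of $\cT_*$; once these are pinned down, the verification is the one-line substitution above. As a sanity check one can observe that the resulting formula for $\wt\cM_t$ is consistent with the $s_N$-twisting behaviour of $\nu$ recorded in \eqref{eqn:nulinear}.
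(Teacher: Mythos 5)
Your proof is correct and takes essentially the same route as the paper's: both verify the commutativity on the $\R$-basis of local sections $\iota$ of $\pi_N$, using that $\cT_N^{-1/N}$ shifts $(f_N)_\infty$ by $-2\pi i/N$ so that $\nu$ picks up the factor $e^{2\pi s_N/N}$. You are slightly more explicit than the paper about why checking on the $\iota\otimes 1$ suffices and about the chain-map property of $\wt\cM_t$, but the core computation is identical.
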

\begin{proof}
The map $\nu$ from Section~\ref{s:thickldr} was defined by mapping a local section $\iota\colon U_N\to (U_N)^{f_N}$ to
$\exp(-\Im (f_N)_\infty\circ \iota \otimes s_N)$. We need to show that $\wt\cM_t \circ \nu = \cT^*\nu\circ \cM_t$. We can verify this on a basis made of sections $\iota$ of $\pi_N$.

\begin{align*}
\nu (\cM_t\iota) &= \nu((\cT_N^{-1/N}\circ \iota\circ \cT)\otimes 1) =  &\text{(def. of $\cM_t$)}\\
&= \exp(-\Im (f_N)_\infty\circ \cT_N^{-1/N}\circ \iota\circ \cT\otimes s_N) = &\text{(def. of $\nu $)}\\
&= \exp(-\Im ((f_N)_\infty\circ \iota\circ \cT-2\pi i /N)\otimes s_N) = &\text{(def. of $\cT_N^{1/N}$)}\\
&= \exp(2\pi s_N/N) \exp(-\Im (f_N)_\infty\circ \iota\circ \cT\otimes s_N) = \\
&= \exp(2\pi s_N/N) \exp(-\cT^*(\Im (f_N)_\infty\circ \iota)\otimes s_N)=
\\
&=
 \wt\cM_t\exp(-\Im (f_N)_\infty\circ \iota \otimes s_N) 
 & \text{(def. of $\wt\cM_t$)}
 \\
&=
\wt\cM_t \circ \nu(\iota). 
 & \text{(def. of $\nu$)}
 \\
\end{align*}

\end{proof}

By Lemmas~\ref{lem:t-liftstoEquiv} and~\ref{lem:t-liftstodR} together, the following diagram commutes. Note that $p_*\circ \cT_* = p_*$.
\[
\begin{tikzcd}
\ov\cL \arrow[r,"\theta_{ \ovLn }^{-1}","\sim"']\arrow[d,"t\cdot"]&
p_* \ovLn  \arrow[r,twoheadrightarrow]\arrow[d,"p_*\cM_t"] &
p_*( \ovLn \otimes R(N)_m)\arrow[r,"p_*\nu","\sim"']\arrow[d,"p_*\cM_t"] &
p_*\cE^\bullet_{U_N}\left( \Im \frac{df_N}{f_N},m\right)\arrow[d,"p_*\wt\cM_t"]\\
\ov\cL \arrow[r,"\theta_{ \ovLn }^{-1}","\sim"']&
p_* \ovLn  \arrow[r,twoheadrightarrow] &
p_*( \ovLn \otimes R(N)_m)\arrow[r,"p_*\nu","\sim"'] &
p_*\cE^\bullet_{U_N}\left( \Im \frac{df_N}{f_N},m\right).
\end{tikzcd}
\]
Taking hypercohomology of the above diagram, we have that ${p_*}\wt \cM_t$ induces an endomorphism of $\H^*\left(U;p_*\cE^\bullet_{U_N}\left( \Im \frac{df_N}{f_N},m\right)\right) \cong  \H^*\left(U_N;\cE^\bullet_{U_N}\left( \Im \frac{df_N}{f_N},m\right)\right)$, which in turn induces the action of $t$ on $\Tors_R H^*(U;\ov\cL)$ if $m\gg 0$. Indeed, if $m\gg 0$, the horizontal maps are the ones used in Section~\ref{s:MHS} to canonically embed $\Tors_R H^*(U;\ov\cL)$ into $\H^*(U_N;(\ov{\calL_N})_m)\cong\H^*\left(U_N;\cE^\bullet_{U_N}\left( \Im \frac{df_N}{f_N},m\right)\right)$, where the isomorphism is given by $\nu$.

 In the next proposition, which completes the proof of Theorem~\ref{thm:tss}, we find a morphism $\wt\cM_t^{ss}$ that induces the semisimple part of $\wt \cM_t(=t)$ on $\H^*\left(U_N;\cE^\bullet_{U_N}\left( \Im \frac{df_N}{f_N},m\right)\right)$. This is meant in the sense that $t$ decomposes as a product of two commuting operators $t=t_{ss} t_{u}$, where $t_{ss} $ is semisimple and $t_{u}$ is unipotent.

We are ready to prove Theorem~\ref{thm:tss}. In fact, we prove the following more precise statement with $\R$ coefficients. The result for $\Q$ and $\C$ follows from Remark~\ref{remk:QRC} and Remark~\ref{remk:Q}.
\begin{thm}\label{prop:tss}
Let $t=t_{ss}t_{u}$ be the Jordan-Chevalley decomposition of the action of $t$ on $\Tors_R H^i(U;\ov\cL)$ as the product of a semisimple operator and a unipotent operator that commute with each other.  The action of $t_{ss}  $ on $\Tors_R H^i(U;\ov\cL)$ is a mixed Hodge structure morphism induced by the following morphism of complexes of sheaves for $m\gg 0$, defined $R(N)$-linearly as
\[
\f{\wt\cM_t^{ss}}{\cE^\bullet_{U_N}\left( \Im \frac{df_N}{f_N},m\right)}{\cT_*\cE^\bullet_{U_N}\left( \Im \frac{df_N}{f_N},m\right)}{\omega\otimes 1}{\cT^*\omega\otimes 1.}
\]
\end{thm}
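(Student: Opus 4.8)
The plan is to split the chain map $\wt\cM_t$ of Lemma~\ref{lem:t-liftstodR} — which, by Lemmas~\ref{lem:t-liftstoEquiv}--\ref{lem:t-liftstodR} and the commutative diagram above, induces the action of $t$ on $\Tors_R H^i(U;\ov\cL)$ as soon as $m\gg 0$ — as a composition of two \emph{commuting} chain maps: the map $\wt\cM_t^{ss}$ of the statement, $\omega\otimes 1\mapsto \cT^*\omega\otimes 1$, followed by multiplication by the unit $e^{2\pi s_N/N}\in R(N)_m$ (which is a chain endomorphism since $d_\eta$ is $R(N)_m$-linear). Since $\cT^*$ is $R(N)_m$-linear, these commute and $\wt\cM_t = \wt\cM_t^{ss}\circ(\cdot\, e^{2\pi s_N/N}) = (\cdot\, e^{2\pi s_N/N})\circ\wt\cM_t^{ss}$. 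Taking hypercohomology and restricting to $\Tors_R H^i(U;\ov\cL)$ — which both factors preserve, as each commutes with multiplication by $(s_N)^m$ into the $2m$-thickening (cf.\ Remark~\ref{remk:push}) — I obtain $t = \Phi\circ E = E\circ\Phi$, where $\Phi$ and $E$ denote the endomorphisms of $\Tors_R H^i(U;\ov\cL)$ induced by $\wt\cM_t^{ss}$ and by $\cdot\, e^{2\pi s_N/N}$, respectively.

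Next I would check the hypotheses for the uniqueness of the Jordan--Chevalley decomposition over $\R$. Commutation $E\Phi = \Phi E$ is inherited from the chain level. The operator $E$ is unipotent: $e^{2\pi s_N/N} - 1$ is a power series in $s_N = t^N-1$ with zero constant term and $s_N$ acts nilpotently on $\Tors_R H^i(U;\ov\cL)$, so $E - \id$ is nilpotent. The operator $\Phi$ is semisimple: $\cT$ generates the deck group $\Z/N$ of the $N$-sheeted cover $p\colon U_N\to U$, so $\cT^N = \id_{U_N}$; iterating $\wt\cM_t^{ss}$ $N$ times — applying $\cT_*$ at each step and using the canonical identification $(\cT^N)_*\cE^\bullet_{U_N}(\cdot) = \cE^\bullet_{U_N}(\cdot)$ — gives the identity chain map, hence $\Phi^N = \id$. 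Thus the minimal polynomial of $\Phi$ divides the squarefree polynomial $x^N-1$, and $\Phi$ is semisimple. By the uniqueness of the Jordan--Chevalley decomposition, $t_{ss} = \Phi$ and $t_u = E$.

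It remains to see that $\Phi = t_{ss}$ is a morphism of MHS. Here I would use that $\cT\colon U_N\to U_N$ is an algebraic automorphism satisfying $f_N\circ\cT = \zeta_N f_N$ with $\zeta_N = e^{2\pi i/N}$, so $\cT^*\frac{df_N}{f_N} = \frac{df_N}{f_N}$ and $\cT$ preserves $\Im\frac{df_N}{f_N}$; choosing a good compactification of $U_N$ to which $\cT$ extends (harmless, since by \cite[Theorem~5.4.7]{EGHMW} the resulting MHS is independent of this choice), $\cT^*$ induces an automorphism of Deligne's $\R$-mixed Hodge complex of sheaves, hence — through the thickening construction recalled in Sections~\ref{s:thickldr}--\ref{s:MHS} — an automorphism of its $m$-thickening $j_*\cE^\bullet_{U_N}(\Im\frac{df_N}{f_N},m)$; this automorphism is precisely $\wt\cM_t^{ss}$ (equivalently, apply Remark~\ref{remk:push} to $g = \cT$). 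Since $\wt\cM_t^{ss}$ commutes with multiplication by $(s_N)^m$ into the $2m$-thickening, it induces a MHS morphism on $\Tors_{R(N)}H^*(U_N;\ov{\cL_N})\cong\Tors_R H^*(U;\ov\cL)$, namely $\Phi$; so $t_{ss}$ is a MHS morphism. The cases $k = \Q$ and $k = \C$ follow from Remarks~\ref{remk:QRC} and~\ref{remk:Q}.

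The main obstacle I anticipate is making the chain-level identity $\Phi^N = \id$ fully rigorous: one must be careful that morphisms of the form $\cE^\bullet\to\cT_*\cE^\bullet$ are composed by applying $\cT_*$, and that $\cT^N = \id$ furnishes a \emph{canonical} identification of the target of the $N$-fold iterate with $\cE^\bullet$. The rest is routine: verifying $f_N\circ\cT = \zeta_N f_N$ from the construction of $U_N$, checking that $E$ and $\Phi$ descend to the torsion submodule, and confirming that a good compactification of $U_N$ on which $\cT$ extends can be chosen — all standard given the independence-of-choices results recalled from \cite{EGHMW}.
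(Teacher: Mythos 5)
Your proposal is correct and follows essentially the same route as the paper's proof: you factor the chain map $\wt\cM_t$ as $\wt\cM_t^{ss}$ times multiplication by the unit $e^{2\pi s_N/N}$, observe that these commute, that the first factor satisfies $(\wt\cM_t^{ss})^N=\id$ and hence induces a semisimple operator while the second induces a unipotent one, invoke uniqueness of the Jordan--Chevalley decomposition, and use $\cT^*\frac{df_N}{f_N}=\frac{df_N}{f_N}$ together with Remark~\ref{remk:push} to see that $\wt\cM_t^{ss}$ underlies a morphism of mixed Hodge complexes. The only cosmetic difference is that the paper phrases the three verifications for $p_*\wt\cM_t^{ss}$ and $p_*\wt\cM_t\circ(p_*\wt\cM_t^{ss})^{-1}$ after pushing forward along $p$, which sidesteps the bookkeeping about composing maps of the form $\cE^\bullet\to\cT_*\cE^\bullet$ that you flag as your ``main obstacle.''
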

\begin{proof}
Notice that $f_N\circ \cT= e^{2\pi i / N} f_N$, so $\cT^*\frac{df_N}{f_N}=\frac{df_N}{f_N}$. By Remark~\ref{remk:push}, $\wt\cM_t^{ss}$ is a well defined chain map that extends to a morphism of mixed Hodge complexes of sheaves, and thus induces a MHS morphism from $H^*(U_N;( \ovLn )_m)$ to itself, which it turn induces a MHS morphism from $\Tors_R H^*(U;\ov\cL)$ to itself. Hence, all that is left to show is that $\wt\cM_t^{ss}$ induces $t_{ss} $ on $\Tors_R H^*(U;\ov\cL)$.

Recall that $\wt\cM_t (\omega\otimes f(s_N)) = \cT^*\omega \otimes e^{2\pi s_N/N} f(s_N)$. Since we are interested in the corresponding map in cohomology, we can consider $p_*\wt\cM_t^{ss}$ instead of $\wt \cM_t^{ss}$. It satisfies the following properties:
\begin{enumerate}
\item $p_*\wt\cM_t$ and $p_*\wt\cM_t^{ss}$ commute: this can be checked directly.
\item $(p_*\wt\cM_t^{ss})^N$ is the identity.
\item $p_*\wt\cM_t \circ (p_*\wt\cM_t^{ss})^{-1}$ is unipotent: By the definitions, it is multiplication by $e^{2\pi s_N/N}$, which is indeed unipotent, as $e^{2\pi s_N/N}-1$ is a multiple of $s_N$, and $(s_N)^m=0$ in $R(N)_m$
\end{enumerate}
Let $A$ and $B$ be the maps induced on $\Tors_R H^*(U;\ov\cL)$ by $p_*\wt\cM_t^{ss}$ and $p_*\wt\cM_t \circ (p_*\wt\cM_t^{ss})^{-1}$ respectively. Recall that $p_*\wt\cM_t$ induces multiplication by $t$ on $\Tors_R H^*(U;\ov\cL)$ by Lemmas~\ref{lem:t-liftstoEquiv} and \ref{lem:t-liftstodR}, so $t=AB$. Property (1) above says that $A$ and $B$ commute, property (2) above implies that $A$ is semisimple, and property (3) says that $B$ is unipotent. By the uniqueness of the Jordan-Chevalley decomposition, $A=t_{ss} $ and $B=t_{u}$, which concludes the proof.
\end{proof}

\begin{remk}\label{remk:decompcohom}
Notice that Theorem~\ref{thm:tss} is the dual statement (with a shift, see Remark~\ref{remk:dual}) of Theorem~\ref{prop:tss}, and that the direct sum decomposition from Corollary~\ref{cor:geneigenspaces} also holds for $\Tors_R H^*(U;\ov\cL)$.
\end{remk}

To understand the direct sum decomposition of Corollary~\ref{cor:geneigenspaces}, it suffices to work in cohomology, as exemplified by the following result

\begin{corollary}\label{cor:dual}
Let $g\in k[x]$ be a factor of $x^N-1$, with $k=\Q,\R,\C$. Under the notation of Corollary~\ref{cor:geneigenspaces}, there is a MHS isomorphism $(\Tors_R H^{j+1}(U,\ov\cL))_{g}\cong \left((\Tors_R H_{j}(U^f,k))_{g}\right)^{\vee_k}$ for all $j$. 
\end{corollary}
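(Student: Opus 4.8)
The plan is to derive Corollary~\ref{cor:dual} from Theorem~\ref{thm:tss} (equivalently Theorem~\ref{prop:tss}) together with the duality already recorded in the excerpt. Recall from Section~\ref{s:background} that there is a canonical $R$-module isomorphism $\Phi\colon\left(\Tors_R H_j(U^f;k)\right)^{\vee_k}\xrightarrow{\sim}\Tors_R H^{j+1}(U^f;\ov\cL)=\Tors_R H^{j+1}(U;\ov\cL)$, and that the MHS on the homology Alexander module is \emph{defined} to be the dual of the MHS on the cohomology Alexander module via $\Phi$. So $\Phi$ is tautologically a MHS isomorphism. The only thing that needs checking is that $\Phi$ intertwines the relevant operators correctly, so that it carries the generalized eigenspace $(\Tors_R H^{j+1}(U;\ov\cL))_g$ onto $\left((\Tors_R H_j(U^f;k))_g\right)^{\vee_k}$.

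First I would pin down how $t$ acts under $\Phi$. By the discussion preceding Lemma~\ref{233}, the action of $t$ on $\ov\cL$ is the action of $t^{-1}$ on $\cL$; correspondingly, under the $k$-duality $\left(\Tors_R H_j(U;\cL)\right)^{\vee_k}\cong \Tors_R H^{j+1}(U;\ov\cL)$ the operator $t$ on the cohomology side is the transpose of $t^{-1}$ (equivalently of $t$, up to the bookkeeping of $\ov{\cdot}$) on the homology side — in any case, $\Phi$ conjugates the $R$-module structure on one side to the $R$-module structure on the other in a way that is $k[t^{\pm1}]$-semilinear for the involution $t\mapsto t^{-1}$, or $k[t^{\pm1}]$-linear outright, depending on the precise normalization in \cite{EGHMW}. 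Either way, since the Jordan--Chevalley decomposition is functorial under transpose and under the ring involution $t\mapsto t^{-1}$ (which fixes $1$ and permutes the other $N$-th roots of unity), we get $\Phi\circ (t_{ss})^{\vee} = (t_{ss})\circ \Phi$ up to relabeling the eigenvalue $\zeta\mapsto \zeta^{-1}$. For $g$ a monic irreducible factor of $x^N-1$, the generalized $g$-eigenspace of $t$ equals $\ker g(t)^m$ for $m\gg0$, which by Remark~\ref{remk:decomp} equals $\ker g(t_{ss})$. So I would argue: $\Phi$ carries $\ker g(t_{ss})$ on the cohomology side isomorphically onto $\ker \tilde g(t_{ss})^{\vee}$ on the homology side, where $\tilde g$ is the factor of $x^N-1$ whose roots are the inverses of the roots of $g$; and because $x^N-1$ is self-reciprocal, $\tilde g = g$ up to a unit, so $\ker \tilde g(t_{ss})^{\vee} = \left(\ker g(t_{ss})\right)^{\vee_k} = \left((\Tors_R H_j(U^f;k))_g\right)^{\vee_k}$, using that taking kernels of a MHS endomorphism commutes with $k$-duality of MHS (the annihilator of the image of the transpose). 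This yields the claimed MHS isomorphism.

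The key steps in order: (i) recall $\Phi$ and that by construction it is a MHS isomorphism of $\Tors_R H^{j+1}(U;\ov\cL)$ with the dual MHS of $\Tors_R H_j(U^f;k)$ (Section~\ref{s:background}, using \cite[Proposition 2.4.1, Theorem 5.4.10]{EGHMW}); (ii) record the compatibility of $\Phi$ with the $t$-actions, hence with $t_{ss}$ by functoriality of Jordan--Chevalley under transpose/involution, noting the self-reciprocity of $x^N-1$ so that irreducible factors are preserved; (iii) observe $(\Tors_R H^{j+1}(U;\ov\cL))_g=\ker g(t_{ss})$ and $\left((\Tors_R H_j(U^f;k))_g\right)^{\vee_k}=\left(\ker g(t_{ss})\right)^{\vee_k}$ by Remark~\ref{remk:decomp}; (iv) conclude that $\Phi$ restricts to the desired isomorphism, which is automatically a MHS isomorphism since the MHS on $(\Tors_R H^{j+1}(U;\ov\cL))_g$ and on its homological counterpart are the sub-MHS's (by Corollary~\ref{cor:geneigenspaces} and Remark~\ref{remk:decompcohom}) and $\Phi$ respects sub-MHS's.

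I expect the only genuine subtlety — the "main obstacle" — to be the bookkeeping in step (ii): keeping straight whether the duality $\left(\Tors_R H_j(U;\cL)\right)^{\vee_k}\cong \Tors_R H^{j+1}(U;\ov\cL)$ is $t$-equivariant or $(t\leftrightarrow t^{-1})$-equivariant, since $\ov\cL$ enters the cohomology side. Regardless of which it is, the argument closes because $x^N-1=-x^N(x^{-1})^N+x^N\cdots$ is invariant under $x\mapsto x^{-1}$ up to a unit, so its monic irreducible factors over $k$ are permuted among themselves by this involution and, more to the point, each is individually sent to a unit multiple of another such factor; combined with the fact that $1$ is a fixed point, this guarantees that the indexing of generalized eigenspaces by factors $g$ is respected (possibly after the harmless relabeling $g\mapsto \tilde g$, which one should simply absorb into the statement or note is trivial when one is careful). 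Everything else is formal manipulation of kernels of MHS morphisms and of the already-established duality, so no new Hodge-theoretic input beyond Theorem~\ref{thm:tss} is needed.
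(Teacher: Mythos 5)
Your overall route is the paper's route (dualize, use Theorem~\ref{thm:tss}, match eigenspaces), but two steps that you wave at would actually fail as stated and need to be pinned down.

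First, the hedge on $t$-linearity versus $t\leftrightarrow t^{-1}$-semilinearity cannot be absorbed by self-reciprocity of $x^N-1$. That polynomial is self-reciprocal, but over $k=\C$ its irreducible factors $x-\zeta$ are \emph{not} individually fixed by $x\mapsto x^{-1}$ except for $\zeta=\pm 1$; the involution pairs $x-\zeta$ with $x-\zeta^{-1}$. So if the duality were semilinear, you would prove $(\Tors_R H^{j+1}(U;\ov\cL))_{g}\cong\left((\Tors_R H_j(U^f;k))_{\tilde g}\right)^{\vee_k}$, which is a genuinely different statement from the one at hand. The proof therefore must establish which it is; the paper records (and you should cite or verify) that the $k$-dual of multiplication by $t$ is again multiplication by $t$, so $t_{ss}^\vee=t_{ss}$ and no relabeling occurs.

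Second, the identity $\ker g(t_{ss})^\vee=\left(\ker g(t_{ss})\right)^{\vee_k}$ that you invoke is false for a general operator: the kernel of the transpose is the annihilator of the image, i.e.\ the dual of the \emph{cokernel}, not the dual of the kernel. What rescues the argument is precisely that $t_{ss}$ is semisimple, so the module decomposes as $\ker g(t_{ss})\oplus\operatorname{im} g(t_{ss})$ (the eigenspace decomposition of Corollary~\ref{cor:geneigenspaces}/Remark~\ref{remk:decompcohom}), which gives a canonical isomorphism $\coker g(t_{ss})\cong\ker g(t_{ss})$ — the paper realizes it explicitly as the action of $\tfrac{x^N-1}{g(x)}$ at $t_{ss}$, and notes that this is a MHS map. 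You gesture at this decomposition in step~(iv), but the chain of equalities in the middle of your proof silently conflates sub- and quotient-duals in a way that only works \emph{because} of the semisimple splitting; that needs to be said, not assumed. Once both points are made explicit your proof coincides with the paper's.
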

\begin{proof}
Since $\Tors_R H^{j+1}(U,\ov\cL)$ and $\Tors_R H_{j}(U^f,k))$ are dual MHS's, each sub-MHS of the former is dual to a quotient MHS of the latter. Note that the dual of multiplication by $t$ is multiplication by $t$, so the $k$-dual of $t_{ss} $ is $t_{ss}$. Combining these two facts, we have that the kernel of $g(t_{ss})$ (which is a sub-MHS by Theorem~\ref{thm:tss}) is naturally the dual MHS to the cokernel of $g(t_{ss})$ acting on $\Tors_R H_{j}(U^f,k))$. Since both decompose as a direct sum of their eigenspaces for $t_{ss}$, there is a natural MHS isomorphism between the cokernel and the kernel of $g(t_{ss})$ (namely, the action of $\frac{x^N-1}{g(x)}$ evaluated at $t_{ss}$).

\end{proof}

\section{The MHS on the eigenspace of eigenvalue 1}\label{s:eigen1}

In this section, we prove the result promised in Remark~\ref{remk:noN}, namely that the MHS on $\left(\Tors_R H^*(U;\ov\cL)\right)_1$ can be computed from $U$, without using a finite cover. We use the notations from this Remark. Let us recall the constructions: in Section~\ref{s:thickldr} it is mentioned that $\calE^\bullet_{U}(\Im \frac{df}{f},m)$ has a mixed Hodge complex structure, and a quasi-isomorphism with $\ov{\calL}_m$ is chosen. This gives a MHS to $H^*(U;\ov{\calL}_m)$.

In the following proposition, we recall relevant facts which appear in the proof of \cite[Theorem 5.4.8]{EGHMW} and we draw new conclusions stemming from Theorem~\ref{thm:tss} and its proof.
\begin{proposition}\label{prop:recallingIndepN}
Let $\sigma'_m$ be the following composition multiplied by the scalar $\frac{1}{N}$:
\begin{equation}\label{eqn:sigma'}
p_* \ovLn \otimes_{R(N)}R(N)_m
\xrightarrow[\cong]{\theta_{ \ovLn }\otimes_{R(N)} R(N)_m}
\ov{\cL}\otimes_{R(N)}R(N)_m
\overset{a}\twoheadrightarrow 
\ov{\cL}\otimes_{R}R_m,
\end{equation}
where the map $a$ is induced by tensoring $\id_{\ov\cL}$ with the inclusion $R(N)\hookrightarrow R$. Then $\sigma'_m$ is a split surjection and it has a right inverse $\sigma_m$ with the following properties.
\begin{enumerate}

\item \label{part:sigmaMHC}$\sigma_m$ is the real part of a morphism of mixed Hodge complexes. In particular, it induces a split injective MHS morphism:
\[
H^*(U;\ov{\cL}\otimes_R R_m)\hookrightarrow H^*(U_N; \ovLn \otimes_{R(N)}R(N)_m).
\]

\item \label{part:Rlinear}$\sigma_m$ and $\sigma'_m$ are morphisms of sheaves of $R$-modules, where $p_* \ovLn \otimes_{R(N)}R(N)_m$ has the $R$-module structure given by letting $t$ act as $p_*\cM_t:p_* \ovLn \rightarrow p_* \ovLn $. In particular, they are $R(N)$-linear.

\item \label{part:phipsi}Let $\phi_{m'm}$ be the projection $\ov{\cL}\otimes_R R_{m+m'}\twoheadrightarrow \ov{\cL}\otimes_R R_{m}$, and let $\psi_{mm'}:\ov{\cL}\otimes_R R_m\hookrightarrow \ov{\cL}\otimes_R R_{m+m'}$ be given by multiplication by $(\log(1+s))^{m'}$ (seen as a power series in $s=t-1$). Similarly, we can define $(\phi_N)_{m'm}$ and $(\psi_N)_{mm'}$ using $s_N=t^N-1$. The following hold up to multiplication by a non-zero real constant:
\begin{align*}
\sigma_{m+m'}\circ\psi_{mm'}&=(\psi_N)_{mm'}\circ\sigma_{m};
&
\sigma'_{m+m'}\circ(\psi_N)_{mm'}&=\psi_{mm'}\circ\sigma'_{m};
\\
\sigma_m\circ\phi_{m'm}&=(\phi_N)_{m'm}\circ\sigma_{m+m'};
&
\sigma'_m\circ(\phi_N)_{m'm}&=\phi_{m'm}\circ\sigma'_{m+m'}.
\end{align*}

\end{enumerate}
\end{proposition}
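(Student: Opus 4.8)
The plan is to realize $\sigma_m$ concretely as the inclusion of a canonical $R$-module direct summand, to read off parts (2) and (3) from this description together with Lemma~\ref{lem:t-liftstoEquiv}, and to obtain part (1) from Theorem~\ref{prop:tss}, recalling the mixed-Hodge-complex level statements from \cite[proof of Theorem 5.4.8]{EGHMW} as needed. Surjectivity of $\sigma'_m$ is essentially free: under $\theta_{\ovLn}\otimes_{R(N)}R(N)_m$ the source becomes $\ov\cL\otimes_{R(N)}R(N)_m=\ov\cL/(s_N)^m\ov\cL$, and $a$ is the reduction $\ov\cL/(s_N)^m\ov\cL\twoheadrightarrow\ov\cL/(s)^m\ov\cL=\ov\cL\otimes_R R_m$, well defined since $(s_N)^m\ov\cL\subseteq(s)^m\ov\cL$. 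Writing $t^N-1=\prod_i p_i(t)$ for the distinct monic irreducible factors over $k$, with $p_1=t-1$, the Chinese Remainder Theorem gives a primary decomposition into $R$-submodules $\ov\cL\otimes_{R(N)}R(N)_m=\bigoplus_i\ov\cL/p_i(t)^m\ov\cL$, in which the $p_1$-summand is exactly $\ov\cL\otimes_R R_m$ and $\sigma'_m$ is $\tfrac1N$ times the projection onto it. I define $\sigma_m$ to be $N$ times the inclusion of the $p_1$-summand, transported to the big space by $(\theta_{\ovLn})^{-1}\otimes_{R(N)}R(N)_m$; then $\sigma'_m\circ\sigma_m=\id$.

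With this description part (2) is immediate. The map $\sigma'_m$ is $R$-linear because $\theta_{\ovLn}\otimes_{R(N)}R(N)_m$ intertwines the $\cM_t$-action with multiplication by $t$ — this is Lemma~\ref{lem:t-liftstoEquiv} after applying the right-exact functor $-\otimes_{R(N)}R(N)_m$ — while $a$, being reduction of scalars, is $R$-linear. The map $\sigma_m$ is $R$-linear because a primary component is an $R$-submodule and the identification $\ov\cL\otimes_R R_m\cong(p_1\text{-summand})$ is an isomorphism of $R$-modules. Part (3) likewise reduces to algebra over the primary decomposition: the idempotents cutting out the $p_1$-summand are compatible with the reductions $\phi_{m'm}$ and $(\phi_N)_{m'm}$, and on the $p_1$-summand one has $\log(1+s_N)=\log((1+s)^N)=N\log(1+s)$ as power series in $s$, which is where the nonzero real constants come from; these are the compatibilities verified in \cite[proof of Theorem 5.4.8]{EGHMW}.

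The remaining and main task is part (1): that this $\sigma_m$ is the real part of a morphism of mixed Hodge complexes. This is not formal, because multiplication by $t$ need not be a MHS morphism (Theorem~\ref{thm:summary}\eqref{part:t}), so the inclusion of the $p_1$-summand carries genuine Hodge-theoretic content, and Theorem~\ref{prop:tss} is the key input. By its proof, $p_*\wt\cM_t^{ss}$ is an endomorphism of the (thickened) mixed Hodge complex $p_*\cE^\bullet_{U_N}(\Im\tfrac{df_N}{f_N},m)$ with $(p_*\wt\cM_t^{ss})^N=\id$; hence $P:=\tfrac1N\sum_{k=0}^{N-1}(p_*\wt\cM_t^{ss})^k$ is an idempotent endomorphism of mixed Hodge complexes, and $\im(P)=\ker(p_*\wt\cM_t^{ss}-\id)$ is a sub-mixed Hodge complex. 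Under $\nu$ and $\theta_{\ovLn}$, $p_*\wt\cM_t^{ss}$ is the semisimple part $t_{ss}$ of multiplication by $t$ on $\ov\cL\otimes_{R(N)}R(N)_m$ — the sheaf-level version of the Jordan-Chevalley argument in the proof of Theorem~\ref{prop:tss}, using properties (1)--(3) there — so $\im(P)=\ker(t_{ss}-\id)$ is precisely the $p_1$-summand $\ov\cL\otimes_R R_m$.

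Finally, $\sigma'_m$ is itself a morphism of (thickened) mixed Hodge complexes — as recalled from \cite[proof of Theorem 5.4.8]{EGHMW}, under $\nu$ it is $\tfrac1N$ times the transfer map of the finite Galois cover $p$, using $f=f_N^N$ so that the thickening directions agree up to a scalar — so its restriction to the sub-mixed Hodge complex $\im(P)$ is a morphism of mixed Hodge complexes which, under the above identifications, is $\tfrac1N$ times the canonical isomorphism $\im(P)\cong\ov\cL\otimes_R R_m$; together with the comparison carried out in \cite[Theorem 5.4.8]{EGHMW} this exhibits it as an isomorphism of mixed Hodge complexes, whence $\sigma_m$ (being $N$ times its inverse followed by the inclusion $\im(P)\hookrightarrow p_*\cE^\bullet_{U_N}(\Im\tfrac{df_N}{f_N},m)$) is a morphism of mixed Hodge complexes, inducing the split injective MHS morphism of (1). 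The main obstacle is exactly this comparison: matching the thickened Deligne complex on $U$ with the induced mixed-Hodge-complex structure on the fixed part of $p_*\wt\cM_t^{ss}$ inside the big complex. The algebraic splitting is obvious, but its compatibility with the weight and Hodge filtrations is precisely what Theorem~\ref{prop:tss} (supplying $P$ as a polynomial in a morphism of mixed Hodge complexes) and the computations of \cite[Theorem 5.4.8]{EGHMW} are needed for.
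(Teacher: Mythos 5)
Your CRT-based $\sigma_m$ ($N$ times the inclusion of the $(t-1)$-primary summand) is a priori a different map from the paper's, which is the explicit $\sigma_m=\nu_N^{-1}\circ\wh p_{\R}\circ\nu$ from the proof of \cite[Theorem 5.4.8]{EGHMW}, with $\wh p_{\R}(\omega\otimes s^k)=p^*\omega\otimes (s_N/N)^k$ on the thickened de~Rham complexes. The two do coincide, but only \emph{because} the latter is $R$-linear, and $R$-linearity of $\wh p_{\R}$ is precisely the content of the paper's computation for part~\eqref{part:Rlinear}. So taking the CRT inclusion as the definition and declaring part~\eqref{part:Rlinear} ``immediate'' relocates rather than removes the work: to invoke \cite[Theorem 5.4.8]{EGHMW} for part~\eqref{part:sigmaMHC} you must still identify your map with $\wh p_{\R}$ (equivalently, prove $\wh p_{\R}$ is $R$-linear), or else reprove the filtration compatibility independently.

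Your route to part~\eqref{part:sigmaMHC} via the averaging idempotent $P=\tfrac{1}{N}\sum_k(p_*\wt\cM_t^{ss})^k$ is an appealing idea, and it is correct that $P$ is a morphism of mixed Hodge complexes (by Theorem~\ref{prop:tss}) and that $\im P$ is therefore a sub-mixed-Hodge-complex. But the argument as written has a gap at the next step. The map $\sigma'_m$ is a map of sheaves (the sheaf-level $\theta_{\ovLn}$ followed by reduction of scalars $R(N)_m\to R_m$); it is \emph{not} a map of thickened de~Rham complexes, and \cite[Theorem 5.4.8]{EGHMW} lifts $\sigma_m$, not $\sigma'_m$, to the de~Rham level --- so the claim that ``$\sigma'_m$ is itself a morphism of (thickened) mixed Hodge complexes'' has no support in the cited reference. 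Moreover, even granting such a lift, a morphism of filtered complexes that is a bijection on underlying complexes need not be a filtered isomorphism, so restricting it to $\im P$ and inverting is exactly where one needs to know that $\wh p_{\R}$ (or the inverse of its corestriction to its image) respects $W$ and $F$; that is the content of \cite[Theorem 5.4.8]{EGHMW} that the paper cites directly, and the detour through $P$ does not avoid it. The paper's proof is shorter: it takes $\sigma_m=\nu_N^{-1}\wh p_{\R}\nu$, obtains parts~\eqref{part:sigmaMHC} and~\eqref{part:phipsi} as recollections from or easy computations with \cite[Theorem 5.4.8]{EGHMW}, and proves part~\eqref{part:Rlinear} by the direct calculation $\sigma_m(t\cdot c)=\nu_N^{-1}\wh p_{\R}(\exp(2\pi s)\nu(c))=\nu_N^{-1}(p_*\wt\cM_t)\wh p_{\R}\nu(c)=(p_*\cM_t)\sigma_m(c)$, using equation~\eqref{eqn:nulinear}, the formula for $\wh p_{\R}$, and Lemma~\ref{lem:t-liftstodR}.
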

\begin{proof}
Let us recall the explicit definition of $\sigma_m$ given in \cite[Theorem 5.4.8]{EGHMW}. Let $
\wh p_{\R}:\cE^\bullet_U\left(\Im\frac{df}{f},m\right) \\ \rightarrow p_*\cE^\bullet_{U_N}\left(\Im\frac{df_N}{f_N},m\right)$ be the morphism of complexes of sheaves given by
\[
\begin{array}{lccc}
\wh p_{\R}:&\cE^j_U\otimes_k R_m&\longrightarrow& p_*\cE^j_{U_N}\otimes_k R(N)_m\\
&  ( \omega\otimes s^k) &\mapsto& p^* \omega \otimes \frac{(s_N)^k}{N^k}.
\end{array}
\]
in each degree $j$. Recall that the domain and target of this map are soft resolutions of $\ov{\cL}\otimes_R R_m$ and $p_*( \ovLn )\otimes_{R(N)}R(N)_m$ respectively through the maps defined like $\nu$  in Section~\ref{s:thickldr} (which we denote $\nu$ and $\nu_N$ respectively to distinguish them). $\sigma_m$ is the map induced by $\wh p_{\R}$ in this way, which is shown in the proof of \cite[Theorem 5.4.8]{EGHMW} to come from a map of mixed Hodge complexes (and therefore induce a MHS morphism in cohomology) and to be {a} right inverse of $\sigma'_m$. Property \eqref{part:sigmaMHC} appears in the proof of \cite[Theorem 5.4.8]{EGHMW}, and property \eqref{part:phipsi} can be easily checked through direct computation.

Property \eqref{part:Rlinear} is new, because $\cM_t$ has been defined in Lemma~\ref{lem:t-liftstoEquiv} of this paper. Since $\theta_{\ov{\cL_N}}\circ p_*\cM_t\circ \theta_{\ov{\cL_N}}^{-1}$ is multiplication by $t$ on $\ov{\cL}$, then property \eqref{part:Rlinear} for $\sigma'_m$ follows immediately from the definition of $\sigma'_m$. For $\sigma_m$, we have that for all $c\in\ov{\cL}\otimes_R R_m$,
\begin{align*}
\sigma_m(t\cdot c)&=\nu_N^{-1}\circ\wh p_{\R}\circ\nu(t\cdot c)=\nu_N^{-1}\circ\wh p_{\R}(\exp(2\pi s)\cdot\nu(c))= &\text{(By equation (\ref{eqn:nulinear}))}\\
&=\nu_N^{-1}\left(\exp\left(2\pi \frac{s_N}{N}\right)\cdot \wh p_{\R}(\nu(c))\right)= &\text{(By definition of }\wh p_{\R}\text{)}\\
&=\nu_N^{-1}\circ(p_*\widetilde \cM_t)\circ\wh p_{\R}\circ\nu(c)=
&\text{{(By definition of }}\widetilde \cM_t\text{)}
\\
&=(p_*\cM_t)\circ\nu_N^{-1}\circ\wh p_{\R}\circ\nu(c)=(p_*\cM_t)\circ\sigma_m(c).&\text{(By Lemma~\ref{lem:t-liftstodR})}
\end{align*}
\end{proof}

\begin{corollary}\label{cor:noCoverNeeded}
Let $\wt{(\Tors_R H^*(U;\ov{\cL}))_1}$ denote the MHS induced by the kernel of the map induced by $\psi_{mm}$ in cohomology explained in Remark~\ref{remk:noN} for $m\gg 0$, and let $\widehat{(\Tors_R H^*(U;\ov{\cL}))_1}$ denote the MHS induced as a subspace of $\Tors_R H^*(U;\ov{\cL})$. Then, both MHS coincide.
\end{corollary}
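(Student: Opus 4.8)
The plan is to exploit the maps $\sigma_m$ and $\sigma'_m$ of Proposition~\ref{prop:recallingIndepN} to identify, inside $H^*(U_N;\ovLn\otimes_{R(N)}R(N)_m)$, the image of the cohomology computed on $U$ itself, and then to check that under this identification the kernel of $\psi_{mm}$ on the $U$-side matches the generalized eigenspace of eigenvalue $1$ as a \emph{sub}-MHS. Concretely, $\widehat{(\Tors_R H^*(U;\ov{\cL}))_1}$ is, by Theorem~\ref{thm:tss} and its proof, the kernel of $(t_{ss}-\id)$ inside $\Tors_R H^*(U;\ov\cL)$, where the latter is embedded via $\nu$ and the projection $\ov{\calL_N}\twoheadrightarrow(\ovLn)_m$ into $\H^*(U_N;\cE^\bullet_{U_N}(\Im\frac{df_N}{f_N},m))\cong H^*(U_N;(\ovLn)_m)$. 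Meanwhile $\widetilde{(\Tors_R H^*(U;\ov{\cL}))_1}$ is the kernel of $\psi_{mm}$ (multiplication by $(\log(1+s))^m$) on $H^*(U;\ov\cL\otimes_R R_m)$, with the MHS coming from the mixed Hodge complex $\cE^\bullet_U(\Im\frac{df}{f},m)$. So I want a single commuting square, compatible with all the MHS structures, relating $\psi_{mm}$ on the $U$-side with the operator isolating the eigenvalue-$1$ part on the $U_N$-side.

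First I would record that, by Proposition~\ref{prop:recallingIndepN}\eqref{part:sigmaMHC} and \eqref{part:Rlinear}, $\sigma_m$ induces a split injective MHS morphism $H^*(U;\ov\cL\otimes_R R_m)\hookrightarrow H^*(U_N;\ovLn\otimes_{R(N)}R(N)_m)$ which is $R$-linear, where $t$ acts on the target through $p_*\cM_t$, i.e.\ through the operator that Theorem~\ref{prop:tss} shows induces multiplication by $t$ on $\Tors_R H^*(U;\ov\cL)$. Next, using part~\eqref{part:phipsi}, the square relating $\psi_{mm}$ with $(\psi_N)_{mm}$ under $\sigma_\bullet$ commutes up to a nonzero real scalar, and $(\psi_N)_{mm}$ is multiplication by $(\log(1+s_N))^m$, which by the argument recalled in Section~\ref{s:isotor} has the same kernel in cohomology as multiplication by $s_N^m$, whose kernel is exactly $\Tors_{R(N)}H^*(U_N;\ovLn)$. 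Therefore $\sigma_m$ identifies $\ker\psi_{mm}$ on the $U$-side with $\sigma_m(H^*(U;\ov\cL\otimes_R R_m))\cap\Tors_{R(N)}H^*(U_N;\ovLn)$, as MHS (split injectivity of $\sigma_m$ plus the fact that the intersection of a sub-MHS with the image of a MHS morphism is again a sub-MHS).

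The decisive point — and the step I expect to be the main obstacle — is to show that this intersection is precisely $(\Tors_R H^*(U;\ov\cL))_1$, the eigenvalue-$1$ generalized eigenspace, as a subspace of $\Tors_R H^*(U;\ov\cL)$. One inclusion is easy: on the image of $\sigma_m$ the operator $t$ acts as $p_*\cM_t$, and $s=t-1$ is nilpotent there because $s^m=0$ already on $\ov\cL\otimes_R R_m$; hence $t_{ss}=\id$ on this image, so $\sigma_m(H^*(U;\ov\cL\otimes_R R_m))\cap\Tors_R H^*(U;\ov\cL)$ lands inside $(\Tors_R H^*(U;\ov\cL))_1$. For the reverse inclusion I would argue that $\Tors_R H^*(U;\ov\cL)=\Tors_{R(N)}H^*(U_N;\ovLn)$ decomposes over $R(N)_m$ (for $m\gg0$, so that $s_N^m$ kills it) as $\ker s^m\oplus(\text{sum of other eigenspaces of }t_{ss})$, that $\sigma'_m$ is a left inverse of $\sigma_m$ which is $R$-linear and kills $s^{m}$-free-torsion in the "wrong" eigenspaces — more precisely $\sigma'_m\circ\sigma_m=\id$ and $\sigma'_m$ factors through $\ov\cL\otimes_R R_m$, so $\im\sigma_m\supseteq$ any class on which $s=t-1$ acts nilpotently, because such a class $x$ satisfies $x=\sigma_m(\sigma'_m(x))$ once we know $\sigma'_m$ restricted to $\ker s^m$ is injective. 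Establishing that last injectivity, equivalently that $\sigma_m\sigma'_m$ is the identity on the eigenvalue-$1$ part, is where one must use the explicit formulas for $\wh p_{\R}$, $\nu$, $\nu_N$ and the multiplicativity relations \eqref{eqn:nulinear} carefully; once it is in hand, both inclusions combine to give $\widetilde{(\Tors_R H^*(U;\ov{\cL}))_1}\cong\widehat{(\Tors_R H^*(U;\ov{\cL}))_1}$ as MHS, since every map in sight has been checked to be a morphism of mixed Hodge complexes or of MHS, and the final identification is an isomorphism of $k$-vector spaces compatible with $W$ and $F$, hence a MHS isomorphism by Remark~\ref{remk:QRC}.
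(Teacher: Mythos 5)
There is a genuine gap at precisely the point you yourself call ``the main obstacle.'' Everything up to that point is sound and follows the paper: you invoke $\sigma_m$ and $\sigma'_m$ from Proposition~\ref{prop:recallingIndepN}, note that $\sigma_m$ is a split-injective MHS morphism (part~\eqref{part:sigmaMHC}), commutes with the $\psi/\phi$ maps up to scalar (part~\eqref{part:phipsi}), and is $R$-linear for the $p_*\cM_t$-action (part~\eqref{part:Rlinear}), so that $\sigma_m(\ker\psi_{mm})$ is a sub-MHS of $\Tors_R H^*(U;\ov\cL)$ landing inside the eigenvalue-$1$ summand because $s^m=0$ already in $R_m$, hence $t_{ss}=\id$ on $\operatorname{im}\sigma_m$. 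But the reverse inclusion --- that every class in $\widehat{(\Tors_R H^*(U;\ov\cL))_1}$ lies in $\operatorname{im}\sigma_m$, equivalently that $\sigma'_m$ is injective on the eigenvalue-$1$ part --- is exactly the content of the corollary, and you leave it as an ``once it is in hand.'' This is not a routine verification: $\sigma'_m\sigma_m=\id$ tells you nothing about $\sigma_m\sigma'_m$ on the part of $\Tors_R H^*(U;\ov\cL)$ not yet known to be in $\operatorname{im}\sigma_m$, and ``$\sigma'_m$ kills the wrong eigenspaces'' is an assertion, not an argument.

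There is a second, subtler gap even if the injectivity were supplied. The corollary asserts that the two MHS \emph{coincide}, i.e.\ that the identity map on $(\Tors_R H^*(U;\ov\cL))_1$ is a MHS isomorphism. Your argument, even if completed, only produces $\sigma_m$ as a MHS isomorphism between $\wt{(\cdots)_1}$ and $\widehat{(\cdots)_1}$; to get coincidence one must still show that $\sigma_m$ restricted to the eigenvalue-$1$ part is a scalar multiple of the identity. The paper closes both gaps simultaneously by analysing $\sigma'$ rather than $\sigma$: it unwinds $N\,\sigma'_m$ to the purely algebraic reduction map $H^i(U;\ov\cL)\otimes_{R(N)}R(N)_m\to H^i(U;\ov\cL)\otimes_R R_m$ (i.e.\ $M/(t^N-1)^mM\to M/(t-1)^mM$) via a commutative diagram relating the $U_N$- and $U$-side constructions, and then observes that this reduction, restricted to torsion, is literally the projection onto the eigenvalue-$1$ generalized eigenspace. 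That identification gives $\sigma'|_1=\tfrac{1}{N}\,\id$ at once, whence $\sigma|_1=N\,\id$, and the two MHS coincide. You should replace the hand-waved injectivity step with this diagram argument, or supply some other proof that $\sigma'$ equals a scalar multiple of the canonical projection.
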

\begin{proof}
Let $N$ be as in Theorem~\ref{thm:summary}, part (\ref{part:N}), and let $m\gg 0$. By property \eqref{part:phipsi} in Proposition~\ref{prop:recallingIndepN}, $\sigma_m$ induces a morphism
$$
\begin{tikzcd}
\wt{(\Tors_R H^*(U;\ov{\cL}))_1}\arrow[r,hook,"\sigma", "\text{MHS}"']&\Tors_R H^*(U;\ov{\cL}).
\end{tikzcd}
$$
By property \eqref{part:Rlinear} in Proposition~\ref{prop:recallingIndepN}, $\sigma$ factors as follows:
$$
\begin{tikzcd}
\wt{(\Tors_R H^*(U;\ov{\cL}))_1}\arrow[r,"\sigma","\cong"']&\widehat{(\Tors_R H^*(U;\ov{\cL}))_1}\arrow[r,hook,"\text{MHS}"]&\Tors_R H^*(U;\ov{\cL}).
\end{tikzcd}
$$
Hence,
\begin{equation}\label{eqn:MHS}
\begin{tikzcd}
\wt{(\Tors_R H^*(U;\ov{\cL}))_1}\arrow[r,"\sigma","\cong"']&\widehat{(\Tors_R H^*(U;\ov{\cL}))_1}
\end{tikzcd}
\end{equation}
is a MHS isomorphism. We want to show that the identity $\wt{(\Tors_R H^*(U;\ov{\cL}))_1}\to\widehat{(\Tors_R H^*(U;\ov{\cL}))_1}$ is a MHS isomorphism, so it suffices to show that $\sigma$ is multiplication by a non-zero real constant.

By property \eqref{part:phipsi} in Proposition~\ref{prop:recallingIndepN}, $\sigma'_m$ induces a morphism $\sigma'$ such that the following composition is the identity.
$$
\begin{tikzcd}
\wt{(\Tors_R H^*(U;\ov{\cL}))_1}\arrow[r,hook,"\sigma", "\text{MHS}"']&\Tors_R H^*(U;\ov{\cL})\arrow[r,two heads,"\sigma'"]&{(\Tors_R H^*(U;\ov{\cL}))_1}.
\end{tikzcd}
$$

We claim that $\sigma': \Tors_R H^*(U;\ov{\cL})\rightarrow (\Tors_R H^*(U;\ov{\cL}))_1$ is, up to multiplication by $N$, the projection onto the generalized eigenspace. This will finish the proof, since it implies that {the map in equation} \eqref{eqn:MHS} is multiplication by a constant.

Let us verify this. Consider the maps induced by $\sigma'_m$ in cohomology, i.e. the result of taking cohomology in equation~\eqref{eqn:sigma'}. With the identifications that we have made to endow the torsion part of Alexander modules with a MHS (see Lemma~\ref{233} and Section~\ref{s:MHS}), $N\cdot\sigma'_m$ is the only arrow $\star$ that makes the following diagram commutative.
\[
\begin{tikzcd}[column sep = 4em]
\Tors_{R(N)} H^i(U_N; \ovLn )
\arrow[d,"\theta_N","\sim"']
\arrow[rr,"\eqref{ker}"',"\subseteq"]
&&
H^i(U_N; \ovLn \otimes_{R(N)} R(N)_m)\arrow[d,"\theta_N","\sim"']
\\
\Tors_R H^i(U;\ov\cL)
\arrow[r,"\subseteq"]
\arrow[d,"\star"]
&
H^i(U;\ov\cL)\otimes_{R(N)} R(N)_m
\arrow[r,"\mu_1"]
\arrow[d,"\mu_3"]
&
H^i(U;\ov\cL\otimes_{R(N)} R(N)_m)
\arrow[d,"a"]
\\
(\Tors_R H^i(U;\ov\cL))_1
\arrow[r,"\subseteq"]
\arrow[rr,bend right = 10,"\text{Remark \ref{remk:noN}}"]
&
H^i(U;\ov\cL)\otimes_{R} R_m
\arrow[r,"\mu_2"]
&
H^i(U;\ov\cL\otimes_{R} R_m)
\end{tikzcd}
\]
Here, the arrows labeled $\mu_1$ and $\mu_2$ are the result of factoring $H^i(U;\ov\cL)\to  H^i(U;\ov\cL\otimes_{R(N)} R(N)_m)
$ and $H^i(U;\ov\cL)\to  H^i(U;\ov\cL\otimes_{R} R_m)$ through the respective quotients. $\mu_3$ is induced from the inclusion $R(N)\subset R$ by tensoring with the identity. This ensures that $\mu_2\circ \mu_3 = a\circ \mu_1$. The arrows labeled ``$\subseteq$'' are the obvious arrows. Note that the composition of the bottommost row is the map from Remark~\ref{remk:noN}, so indeed this part of the diagram also commutes.

We just need to verify that the projection onto the generalized eigenspace of eigenvalue $1$ also makes that diagram commutative. Note that for any $R$-module $M$, the map $M\otimes_{R(N)} R(N)_m\to M\otimes_R R_m$ is the same as the quotient map $\frac{M}{(t^N-1)^mM}\to \frac{M}{(t-1)M}$, so indeed the bottom left hand square commutes if $\star$ is the projection. This concludes the proof.

\end{proof}

\section{Computations for formal varieties}\label{s:formal}

$(\Tors_R H^j(U;\ov\cL))_1$ is a direct summand of $\Tors_R H^j(U;\ov\cL)$ as a mixed Hodge structure. The goal of this section is to compute the MHS on $(\Tors_R H^j(U;\ov\cL))_1$ in some special cases.

By Theorem~\ref{thm:summary}, part~\eqref{part:cover}, the covering map induces a MHS morphism
$$
H_j(\pi):\Tors_R H_j(U^f;\Q)\rightarrow H_j(U;\Q).
$$
Let us take a look at the Milnor long exact sequence (coming from the corresponding short exact sequence at the chain complex level):
$$
\ldots\to H_j(U^f;\Q)\xrightarrow{\cdot(t-1)}H_j(U^f;\Q)\xrightarrow{H_j(\pi)}H_j(U;\Q)\to H_{j-1}(U^f;\Q)\to\ldots
$$
The exactness gives us the following.
\begin{corollary}\label{cor:2}
$H_j(\pi)$ is injective when restricted to $(\Tors_R H_j(U^f;\Q))_1$ if and only if the $R$-module $(\Tors_R H_j(U^f;\Q))_1$ is semisimple.
\end{corollary}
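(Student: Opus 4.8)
The plan is to read off both directions directly from the displayed Milnor long exact sequence, combined with the structure theory of finitely generated modules over the PID $R=\Q[t^{\pm1}]$. Set $M:=(\Tors_R H_j(U^f;\Q))_1$ and $s:=t-1$. By Remark~\ref{remk:decomp}, $M=\ker(s^m)$ on $\Tors_R H_j(U^f;\Q)$ for $m\gg0$, so $M$ is a finitely generated $R$-module annihilated by a power of $s$; in particular every composition factor of $M$ is $R/(s)\cong\Q$.

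The first step is a purely algebraic observation: for such an $M$, semisimplicity as an $R$-module is equivalent to $sM=0$. Indeed, a semisimple module killed by $s^k$ is a direct sum of simple modules each killed by $s^k$, and the only such simple $R$-module is $R/(s)$; conversely if $sM=0$ then $M$ is a $\Q$-vector space, hence a direct sum of copies of the simple module $R/(s)$. The second step extracts from the Milnor sequence, via exactness at the middle $H_j(U^f;\Q)$, the equality $\ker H_j(\pi)=s\cdot H_j(U^f;\Q)$, so that $\ker\!\big(H_j(\pi)|_M\big)=M\cap s\cdot H_j(U^f;\Q)$. The crux is then to identify this intersection with $sM$. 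The inclusion $sM\subseteq M\cap s\cdot H_j(U^f;\Q)$ is immediate. For the reverse, decompose $H_j(U^f;\Q)$ (using the structure theorem over the PID $R$) as $F\oplus M\oplus M'$, where $F$ is free, $M$ is the $(s)$-primary part of the torsion, and $M'$ is the sum of the primary parts at the other irreducibles; since $s$ is injective on $F$ and invertible on $M'$, an element $z$ with $sz\in M$ must have vanishing $F$- and $M'$-components, whence $sz\in sM$.

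Combining the three steps yields $H_j(\pi)|_M$ injective $\iff M\cap s\cdot H_j(U^f;\Q)=0 \iff sM=0 \iff M$ semisimple. I expect the only point requiring genuine care is the identification $M\cap s\cdot H_j(U^f;\Q)=sM$, i.e. that a class lying in $M$ which is an $s$-boundary in the full homology module is already an $s$-boundary from within $M$; everything else is formal manipulation of the exact sequence and of the decomposition into primary components.
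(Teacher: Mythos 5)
Your proof is correct and takes the same route as the paper: the paper derives Corollary~\ref{cor:2} directly from exactness of the Milnor sequence, and you have simply made explicit the module-theoretic bookkeeping (that for an $(s)$-primary $M$ one has $M\cap s\cdot H_j(U^f;\Q)=sM$ via the primary decomposition, and that semisimplicity of $M$ is equivalent to $sM=0$) which the paper leaves to the reader as ``the exactness gives us the following.''
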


\begin{corollary}\label{cor:3}
Suppose that $f:U\rightarrow\C^*$ induces an epimorphism on fundamental groups, which implies that $U^f$ is connected. If $(\Tors_R H_j(U^f;\Q))_1$ is semisimple for all $j\leq r$, then the Milnor long exact sequence induces the following short exact sequences
$$
0\to(\Tors_R H_j(U^f;\Q))_1\oplus \frac{\operatorname{Free} H_j(U^f;\Q)}{(t-1)\operatorname{Free} H_j(U^f;\Q)}\xrightarrow{H_j(\pi)}H_j(U;\Q)\to (\Tors_R H_{j-1}(U^f;\Q))_1\to 0
$$
for all $j\leq r$. In particular, since $U^f$ is connected, $H_0(U^f;\Q)\cong \Q[t^{\pm 1}]/(t-1)$, and by induction we obtain
$$
\dim_{\Q}(\Tors_R H_j(U^f;\Q))_1=\sum_{l=0}^j (-1)^{l+j}\left(\dim_{\Q}H_l(U;\Q) - \rank_{\Q[t^{\pm 1}]}H_l(U^f;\Q)\right).
$$
\end{corollary}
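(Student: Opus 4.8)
The plan is to read the short exact sequences off the Milnor long exact sequence displayed just above the statement. Its exactness gives $\im H_j(\pi)=\ker\bigl(\cdot(t-1)\colon H_{j-1}(U^f;\Q)\to H_{j-1}(U^f;\Q)\bigr)$ and $\ker H_j(\pi)=(t-1)H_j(U^f;\Q)$, so for every $j$ one obtains a \emph{canonical} exact sequence
$$0\to H_j(U^f;\Q)/(t-1)H_j(U^f;\Q)\xrightarrow{H_j(\pi)} H_j(U;\Q)\to \ker\bigl(\cdot(t-1)\colon H_{j-1}(U^f;\Q)\to H_{j-1}(U^f;\Q)\bigr)\to 0,$$
and all the work lies in identifying the two outer terms.

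For this I would first use that $R=\Q[t^{\pm1}]$ is a PID to write $H_l(U^f;\Q)\cong\Tors_R H_l(U^f;\Q)\oplus\Free H_l(U^f;\Q)$, and then the generalized eigenspace decomposition $\Tors_R H_l(U^f;\Q)=\bigoplus_g(\Tors_R H_l(U^f;\Q))_g$ of Corollary~\ref{cor:geneigenspaces}. On the free summand $(t-1)$ is injective; on $(\Tors_R H_l(U^f;\Q))_g$ with $g\neq x-1$ it is invertible (its eigenvalues are $\neq 1$, since $g$ is coprime to $x-1$); and on $(\Tors_R H_l(U^f;\Q))_1=\ker(t-1)^m$ (Remark~\ref{remk:decomp}) it is nilpotent. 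Hence $\ker(\cdot(t-1))$ on $H_{j-1}(U^f;\Q)$ is contained in $(\Tors_R H_{j-1}(U^f;\Q))_1$, while the cokernel of $\cdot(t-1)$ on $H_j(U^f;\Q)$ is $\tfrac{\Free H_j(U^f;\Q)}{(t-1)\Free H_j(U^f;\Q)}\oplus\tfrac{(\Tors_R H_j(U^f;\Q))_1}{(t-1)(\Tors_R H_j(U^f;\Q))_1}$. Now the semisimplicity hypothesis enters: over the PID $R$ a semisimple module killed by a power of $(t-1)$ is a sum of copies of $R/(t-1)$, hence killed by $(t-1)$ itself, so for $l\leq r$ the operator $t$ acts as the identity on $(\Tors_R H_l(U^f;\Q))_1$ (equivalently $H_l(\pi)$ is injective there, which is Corollary~\ref{cor:2}). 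Applying this for $l=j$ and $l=j-1$ collapses the canonical sequence to the asserted
$$0\to(\Tors_R H_j(U^f;\Q))_1\oplus \tfrac{\operatorname{Free} H_j(U^f;\Q)}{(t-1)\operatorname{Free} H_j(U^f;\Q)}\xrightarrow{H_j(\pi)}H_j(U;\Q)\to (\Tors_R H_{j-1}(U^f;\Q))_1\to 0$$
for all $j\leq r$ (the splitting off of the free part uses the non-canonical torsion complement, but this is irrelevant to what follows).

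For the formula I would set $a_l=\dim_\Q(\Tors_R H_l(U^f;\Q))_1$, $b_l=\dim_\Q H_l(U;\Q)$ and $c_l=\rank_{\Q[t^{\pm1}]}H_l(U^f;\Q)$. Since $\Free H_l(U^f;\Q)\cong R^{c_l}$, its quotient by $(t-1)$ has $\Q$-dimension $c_l$, so additivity of $\dim_\Q$ along the short exact sequence yields the recursion $a_j=b_j-c_j-a_{j-1}$ for $j\leq r$. The base case is $j=0$: since $U^f$ is connected, $H_0(U^f;\Q)\cong\Q[t^{\pm1}]/(t-1)$ is torsion with $t$ acting as the identity, so $a_0=1$, $c_0=0$, $b_0=1$, consistent with the recursion once $a_{-1}=0$. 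Induction on $j$ then unwinds the recursion to $a_j=\sum_{l=0}^j(-1)^{j-l}(b_l-c_l)=\sum_{l=0}^j(-1)^{l+j}\bigl(\dim_\Q H_l(U;\Q)-\rank_{\Q[t^{\pm1}]}H_l(U^f;\Q)\bigr)$.

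The argument is in the end routine homological and linear algebra over the PID $R$; the only place where something must be genuinely \emph{used} rather than computed is the semisimplicity of $(\Tors_R H_l(U^f;\Q))_1$ for $l=j$ and $l=j-1$, which is precisely what upgrades ``$(t-1)$ is nilpotent on the eigenvalue-$1$ part'' to ``$(t-1)$ annihilates the eigenvalue-$1$ part'' and so produces the clean short exact sequences. Everything else — the PID splitting, the rank computation $\dim_\Q \Free H_j(U^f;\Q)/(t-1)=c_j$, the connectivity input for the base case, and the induction — presents no real difficulty.
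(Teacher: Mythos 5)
Your proof is correct and fills in exactly the steps the paper leaves implicit when it writes ``the exactness gives us the following'' and ``by induction we obtain'': cutting the Milnor long exact sequence into short exact sequences, using the primary decomposition over the PID $R$ together with invertibility of $(t-1)$ on the eigenspaces with eigenvalue $\neq 1$, and invoking semisimplicity to upgrade nilpotence of $(t-1)$ on the eigenvalue-$1$ part to annihilation (which is the content of Corollary~\ref{cor:2}). The dimension recursion and its base case for connected $U^f$ match the paper's intended argument.
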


In this section, we are interested in computing the MHS on the direct summand corresponding to the eigenvalue $1$ when the variety $U$ is formal (or more generally, $r$-formal, for $r\geq 1$). We start by recalling the definition in the case of real coefficients. The definition with $\Q$ coefficients used by Dupont in \cite{dupont} implies this one.
\begin{dfn}
A real manifold $U$ is \textit{formal} if the global sections of its real de Rham complex can be joined through a zig-zag of quasi-isomorphisms of cdga's to its cohomology algebra $H^*(U;\R)$ (as a complex with zero differential).
\end{dfn}

This definition can be generalized as follows.
\begin{dfn}
Let $r$ be an integer or $\infty$. An $r$-quasi-isomorphism is a morphism between chain complexes inducing maps in cohomology which are isomorphisms in degree $j$ for all $j\leq r$ and injective for $j=r+1$.

A real manifold is $r$\textit{-formal} if the global sections of its real de Rham complex can be joined through a zig-zag of $r$-quasi-isomorphisms of cdga's to a complex with zero differential. An $\infty$-formal manifold is a formal manifold.
\end{dfn}

We recall the following ``purity implies formality'' result of Dupont.

\begin{thm}[\cite{dupont}, Theorem 2.6]\label{thm:dupont}
Let $U$ be a smooth complex algebraic variety, $r$ a non-negative integer or $\infty$, and assume that one of the following conditions is satisfied:
\begin{enumerate}
\item for every integer $j\leq r+1$, $H^j(U;\Q)$ is pure of weight $j$;
\item\label{part:dupont2}for every integer $j\leq r$, $H^j(U;\Q)$ is pure of weight $2j$
\end{enumerate}
Then $U$ is $r$-formal.
\end{thm}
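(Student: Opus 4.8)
The plan is to prove this at the level of commutative differential graded algebras (cdgas), since $r$-formality is a statement about the real homotopy type of $U$ (Dupont then deduces the rational statement by a descent-of-scalars argument). The essential input is a cdga model of $U$ that remembers the mixed Hodge structure on cohomology: by Morgan's mixed Hodge theory of the minimal model together with Navarro Aznar's Thom--Whitney rectification of Deligne's mixed Hodge complex of sheaves (see \cite{dupont} and the references there, and \cite[Ch.~3]{peters2008mixed}), the cdga of global real differential forms on $U$ is connected by a zig-zag of cdga quasi-isomorphisms to a cdga $A$ carrying a multiplicative, biregular, increasing weight filtration $W_\bullet$ that is strictly compatible with the differential and induces on $H^\bullet(A)\cong H^\bullet(U;\R)$ the weight filtration of Deligne's mixed Hodge structure. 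Over $\C$ one can upgrade this to a mixed Hodge cdga and, passing to the canonical splitting, even assume $A$ is bigraded by (cohomological degree, weight) with a differential that is homogeneous of weight zero on the associated graded; in particular the weight spectral sequence degenerates at $E_2$ (Deligne \cite{De2}), so its $E_\infty$-page is $\Gr^W H^\bullet(U)$.

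Next I would translate the purity hypothesis into a statement about weights. Put $\rho(n)=n$ in case (1) and $\rho(n)=2n$ in case (2); both functions are additive, strictly increasing, and vanish at $0$, and the hypothesis says precisely that $\Gr^W_k H^n(U)=0$ for $k\neq\rho(n)$ whenever $n\le r+1$ (case (1)) or $n\le r$ (case (2)). Since $\rho$ is strictly increasing, a short computation with the bidegrees of the differentials of the weight spectral sequence shows that no differential can enter or leave the ``pure line'' $\{(n,\rho(n))\}$, so in the relevant range the cohomology of $A$ lies in a single weight per degree, determined by $\rho$.

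The heart of the argument is then a ``purity implies formality'' step of Deligne--Griffiths--Morgan--Sullivan type. Concretely, one takes an appropriate sub-cdga $C\subseteq A$ built from the weight grading (the analogue of the subalgebra $\ker d^c$ in the Kähler case), together with a natural differential ideal inside it, producing a roof $A \hookleftarrow C \twoheadrightarrow C/(\text{ideal}) \cong (H^{\le r}(U;\R),0)$ in which the multiplicativity of $W$ and the additivity of $\rho$ make $C$ a sub-cdga, the strict compatibility of $d$ with $W$ (i.e.\ the $E_2$-degeneration) makes $C\hookrightarrow A$ an $r$-quasi-isomorphism, and purity forces the induced differential on the quotient to vanish. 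Equivalently and perhaps more transparently, the weight grading on Morgan's minimal model $(\Lambda V,d)$ yields, under purity, a Halperin--Stasheff homogeneous grading $V=\bigoplus_{i\ge 0}V_{(i)}$ with $d(V_{(i)})\subseteq(\Lambda V)_{(i-1)}$: a generator contributing to $H^n$ has weight $\rho(n)$, a relation-imposing generator $v$ has $w(v)=w(dv)$, and the additivity of $\rho$ converts ``$w(dv)=\sum w(\text{factors})$'' into exactly the index shift by $-1$. Such a grading is the standard criterion for formality --- here, $r$-formality once one truncates $C$ (resp.\ $V$) in cohomological degree $r+1$ to keep it a sub-cdga, which costs only the weakening to injectivity (not surjectivity) on $H^{r+1}$ that is already built into the notion of an $r$-quasi-isomorphism.

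The main obstacle is this last step, and inside it the claim that the ``weight-correct'' sub-cdga $C$ (equivalently, the Halperin--Stasheff grading on the minimal model) really computes all of $H^{\le r}(U;\R)$: this amounts to the \emph{strict} compatibility of the differential with the weight filtration at the cochain level, not merely on cohomology, and is exactly the piece that must be imported from mixed Hodge theory (the $E_2$-degeneration and the splitting of the weight filtration on the minimal model). By contrast, the manipulations with $\rho$, the multiplicativity of $W$, and the degree-$(r+1)$ bookkeeping are formal once that input is in hand.
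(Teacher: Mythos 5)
This theorem is not proved in the paper: it is quoted verbatim from Dupont \cite{dupont} (his Theorem~2.6) and used as a black box, so there is no ``paper's own proof'' to compare against. Your sketch nonetheless reconstructs Dupont's actual strategy faithfully --- a mixed Hodge cdga model for $U$ furnished by Navarro Aznar's Thom--Whitney rectification of Deligne's mixed Hodge complex (rather than filtered $C^\infty$ forms directly), $E_2$-degeneration of the weight spectral sequence, the additivity of the weight function $\rho$ in each of the two purity cases, and a Halperin--Stasheff-style positive grading on the (truncated) minimal model coming from the splitting of the weight filtration --- and you correctly isolate the one genuinely Hodge-theoretic input, namely strictness of $d$ with respect to $W$ at the cochain level, without which the roof $A \hookleftarrow C \twoheadrightarrow (H^{\le r},0)$ would fail to be an $r$-quasi-isomorphism. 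The only caveat worth flagging is that Dupont works over $\Q$ throughout and relies on the Sullivan/Halperin--Stasheff descent theorem for formality across field extensions, which you acknowledge in passing but place at the wrong end of the argument: in Dupont's write-up the descent is not an afterthought but is needed because the Deligne splitting of the weight filtration is only available after extension of scalars, so the purity-implies-grading step happens over $\C$ and is then pulled back.
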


\begin{remk}\label{remk:11}
Note that $H^1(\C^*;\Q)$ is pure of weight 2. Therefore, if $f\colon U\to \C^*$ induces an epimorphism on fundamental groups, $f^*H^1(\C^*;\Q)$ is a sub-MHS of $H^1(U;\Q)$ of weight 2. For this reason, in this paper we will only be interested in case \eqref{part:dupont2} of Theorem~\ref{thm:dupont}.
\end{remk}

\begin{remk}[\cite{dupont}, Remark 2.8]\label{remk:dupont}
In the context of Theorem~\ref{thm:dupont}, case \eqref{part:dupont2}, the MHS on $H^j(U;\Q)$ can only be pure of type $(j,j)$ for all $j\leq r$, and therefore its dual, $H_j(U;\Q)$, is pure of type $(-j,-j)$.
\end{remk}

\begin{proposition}\label{prop:rqiso}
Suppose that $U$ is $r$-formal, with $r\geq 1$. Let $K^\bullet$ be a cdga with zero differential such that $K$ can be joined with $\Gamma(U,\cE_U^\bullet)$ through a zig-zag of $r$-quasi-isomorphisms. Let $\eta\in K^1$ corresponding to $\Im\frac{df}{f}$ through the isomorphism in $H^1$ that these $r$-quasi-isomorphisms induce. Then, $K^\bullet(\eta,m)$ can be joined with $\Gamma\left(U,\cE_U^\bullet\left(\Im\frac{df}{f},m\right)\right)$ through a zig-zag of $r$-quasi-isomorphisms for all $m\geq 1$, all of which are morphisms of complexes of $R$-modules.
\end{proposition}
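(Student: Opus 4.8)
The plan is to ``thicken'' a zig-zag of $r$-quasi-isomorphisms degree by degree. Suppose we are given a zig-zag
\[
\Gamma(U,\cE_U^\bullet)= A_0 \xleftarrow{\varphi_1} A_1 \xrightarrow{\varphi_2} A_2 \xleftarrow{} \cdots \xrightarrow{} A_n = K^\bullet,
\]
of $r$-quasi-isomorphisms of cdgas, where each arrow goes in one of the two directions. The first step is to fix, for each intermediate cdga $A_i$, a closed $1$-form $\eta_i\in A_i^1$ representing the class corresponding to $\Im\frac{df}{f}$ under the induced cohomology isomorphisms (this uses $r\geq 1$, so that $H^1$ is matched by every map in the zig-zag). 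For $i=0$ we take $\eta_0 = \Im\frac{df}{f}$ itself, and for $i=n$ we take $\eta_n = \eta$. Along each arrow $\varphi\colon A_i\to A_j$ in the zig-zag, the two chosen forms $\varphi(\eta_i)$ and $\eta_j$ are cohomologous (both represent the same class in $H^1(A_j)$), so $\varphi(\eta_i)-\eta_j = d\beta$ for some $\beta\in A_j^0$.

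The key step is then: given a cdga map $\varphi\colon (A,d_A)\to(B,d_B)$ and closed $1$-forms $\eta_A\in A^1$, $\eta_B\in B^1$ with $\varphi(\eta_A)-\eta_B = d_B\beta$, construct a map of complexes of $R(N)_m$-modules (equivalently $R$-modules via $t\mapsto 1+s$)
\[
\Phi\colon A(\eta_A,m)\longrightarrow B(\eta_B,m)
\]
lifting $\varphi\otimes \id$, and show it is an $r$-quasi-isomorphism whenever $\varphi$ is. The natural candidate, familiar from the homotopy-invariance of twisted de Rham complexes, is the $R(N)_m$-linear map determined on $A^p$ by
\[
\Phi(\omega\otimes 1) = \big(\varphi(\omega)\cdot e^{-s\beta}\big)\otimes 1 = \sum_{k\geq 0}\frac{(-s)^k}{k!}\,\beta^k\wedge\varphi(\omega)\otimes 1,
\]
the sum being finite since $s^m = 0$ in $R(N)_m$. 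A direct computation shows $\Phi\circ d_{\eta_A} = d_{\eta_B}\circ\Phi$: the ``$d\omega$ term'' is handled because $\varphi$ is a chain map and $e^{-s\beta}$ is $d_B$-closed up to the correction produced by the extra term, and the ``$\eta\wedge\omega$ term'' matches precisely because $d_B(e^{-s\beta}) = -s\,(d_B\beta)\wedge e^{-s\beta} = -s(\varphi(\eta_A)-\eta_B)\wedge e^{-s\beta}$. To see $\Phi$ is an $r$-quasi-isomorphism, filter both thickenings by powers of $s$: the associated graded in each degree $k$ is $\varphi$ itself (tensored with the rank-one piece $s^k R(N)_m/s^{k+1}R(N)_m$), which is an $r$-quasi-isomorphism by hypothesis. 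A spectral-sequence (or inductive five-lemma) argument on this finite filtration then upgrades the graded statement to the claim for $\Phi$; here one should be slightly careful that an ``$r$-quasi-isomorphism'' is iso in degrees $\leq r$ and injective in degree $r+1$, so one runs the five-lemma degree by degree, using injectivity in degree $r+1$ to control the iso in degree $r$.

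Finally one composes: along each arrow of the original zig-zag apply the construction above with the chosen $\eta_i$'s and the corresponding $\beta$'s, producing a zig-zag
\[
\Gamma\!\left(U,\cE_U^\bullet\!\left(\Im\tfrac{df}{f},m\right)\right) \xleftarrow{} A_1(\eta_1,m)\xrightarrow{} \cdots \xrightarrow{} K^\bullet(\eta,m)
\]
of $r$-quasi-isomorphisms, all of them $R$-linear by construction. I expect the main obstacle to be the verification that $\Phi$ is a chain map and, more delicately, the passage from the graded $r$-quasi-isomorphism statement to the statement for $\Phi$ with the asymmetric ``iso up to $r$, injective at $r+1$'' bookkeeping; the choice of the primitives $\beta$ and checking $R$-linearity (i.e. compatibility with multiplication by $s$, which is immediate since $\Phi$ is $R(N)_m$-linear by definition and the $R$-structures on both sides are the ones induced by $t\mapsto 1+s$) are routine. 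One should also note the constructed zig-zag is independent of the choices up to the identifications already in place, which is all that is needed for the applications in this section.
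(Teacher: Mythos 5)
Your proof is correct and takes essentially the same approach as the paper's: the paper simply cites two lemmas from \cite{EGHMW} — Lemma 3.1.3, that cohomologous choices of $\eta$ give isomorphic thickenings via an exponential twist $\exp(\beta\otimes s)\wedge-$, and Lemma 3.1.4 (extended to finite $r$), that $\varphi\otimes\id$ induces an $r$-quasi-isomorphism of thickenings when $\varphi$ is an $r$-quasi-isomorphism of cdgas — and your $\Phi$ is precisely the composition of these two maps, with the $s$-adic filtration argument re-deriving the content of Lemma 3.1.4. The one slip is a sign: with the differential convention $d_\eta(\omega\otimes\phi)=d\omega\otimes\phi+\eta\wedge\omega\otimes s\phi$ and $d\beta=\varphi(\eta_A)-\eta_B$, the correct twist is $e^{+s\beta}$ rather than $e^{-s\beta}$.
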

\begin{proof}
This is a consequence of the theory of thickenings of cdga's developed in \cite[Section 3]{EGHMW}.
More specifically, it follows from combining the following two results:
\begin{enumerate}
\item If $G^\bullet$ is a cdga, and $\eta_1,\eta_2$ are cohomologous elements in $G^1\cap\ker d$, then $G(\eta_1,m)$ is isomorphic to $G(\eta_2,m)$ for all $m\geq 1$ as complexes of $R$-modules: \cite[Lemma 3.1.3]{EGHMW}. The isomorphism is given by $\exp(a\otimes s)\wedge -\ $, where $a\in G^0$ is such that $da=\eta_1-\eta_2$.
\item If $F:A_1^\bullet\rightarrow A_2^\bullet$ is an $r$-quasi-isomorphism and a morphism of cdga's, and $\eta\in A_1^1\cap\ker d$, then the map $F\otimes\id:A_1(\eta,m)\rightarrow A_2(F(\eta),m)$ is an $r$-quasi-isomorphism for all $m\geq 1$: \cite[Lemma 3.1.4]{EGHMW}. This lemma is stated for the case $r=\infty$ ($F$ is a quasi-isomorphism), but the same proof holds when $r$ is an integer greater or equal than $1$.
\end{enumerate}
\end{proof}

\begin{remk}\label{remk:compute}
Note that, since $\cE_U^\bullet\left(\Im\frac{df}{f},m\right)$ is a soft resolution of $\ov{\cL}_m$, then
$$
\Tors_R H^{j}(U;\ov\cL_m)\cong H^{j}\Gamma\left(U,\cE_U^\bullet\left(\Im\frac{df}{f},m\right)\right)$$ for all $m\geq 1$ and, by Remark~\ref{remk:noN},
$$\left(\Tors_R H^{j}(U;\ov\cL)\right)_1\cong\ker\left(H^{j}\Gamma\left(U,\cE_U^\bullet\left(\Im\frac{df}{f},m\right)\right)\xrightarrow{\cdot s^m}H^{j}\Gamma\left(U,\cE_U^\bullet\left(\Im\frac{df}{f}2,m\right)\right)\right)$$ for $m\gg 0$.

In particular, Proposition~\ref{prop:rqiso} tells us that, if $U$ is $r$-formal, then
$$
 H^{j}(U;\ov\cL_m)\cong H^{j}\left(U,K^\bullet\left(\eta,m\right)\right)$$ for all $m\geq 1$ and for all $j\leq r$, and
$$
\left(\Tors_R H^{j}(U;\ov\cL)\right)_1\cong\ker\left(H^{j}\left(U,K^\bullet\left(\eta,m\right)\right)\xrightarrow{\cdot s^m}H^{j}\left(U,K^\bullet\left(\eta,2m\right)\right)\right)$$ for $m\gg 0$, for all $j\leq r$.
\end{remk}

We now recall the following result of Budur, Liu and Wang from \cite{budurliuwang}, stated with the notation used in this paper. Originally in \cite{budurliuwang}, this was done in the case when $r=\infty$, but it remains true for integer values of $r$ by  Remark~\ref{remk:compute}.

\begin{proposition}[\cite{budurliuwang}, Proposition 1.7]\label{prop:BLW}
Let $U$ be a smooth connected complex algebraic variety which is $r$-formal for some integer $r\geq 1$ or $r=\infty$, and let $f:U\rightarrow \C^*$ be an algebraic map inducing an epimorphism on fundamental groups. Let $\cL$ be the rank $1$ local system of $R$-modules induced by $f$. Then, $(\Tors_R H^j(U;\ov\cL))_1$ is a semisimple $R$-module for all $j\leq r$.
\end{proposition}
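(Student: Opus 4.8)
The plan is to reduce the statement to a purely algebraic fact about the cohomology of a thickened cdga with zero differential. By Proposition~\ref{prop:rqiso} and Remark~\ref{remk:compute}, if $U$ is $r$-formal then there is a cdga $K^\bullet$ with zero differential, an element $\eta \in K^1$ corresponding to $\Im\frac{df}{f}$, and $R$-linear $r$-quasi-isomorphisms connecting $K^\bullet(\eta,m)$ with $\Gamma(U,\cE^\bullet_U(\Im\frac{df}{f},m))$ for every $m\geq 1$. Since $r$-quasi-isomorphisms induce isomorphisms on cohomology in degrees $\leq r$ (and the connecting maps are $R$-linear), we get $R$-module isomorphisms $H^j(U;\ov\cL_m) \cong H^j(K^\bullet(\eta,m))$ for all $j\leq r$, compatible with multiplication by $s=t-1$, and hence also with multiplication by $\frac{\log(1+s)}{2\pi}$. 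Using the description of $(\Tors_R H^j(U;\ov\cL))_1$ from Remark~\ref{remk:noN} as a kernel of multiplication by $s^m$ (or equivalently $(\log(1+s))^m$) for $m\gg 0$, it therefore suffices to prove that the $R$-module $\ker\bigl(H^j(K^\bullet(\eta,m)) \xrightarrow{\cdot s^m} H^j(K^\bullet(\eta,2m))\bigr)$ is semisimple for $m\gg 0$, i.e. that $s$ acts as $0$ on it, equivalently that $s$ acts semisimply on the $s$-torsion of $H^j(K^\bullet(\eta,m))$.

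The heart of the matter is then the following algebraic claim: for a cdga $(K^\bullet,\wedge,d=0)$ and a closed degree-one element $\eta$, the torsion part of $H^j(K^\bullet(\eta,m))$ (with respect to $s$) is annihilated by $s$ for $m\gg 0$. Here I would unwind the definition of the thickening: $K^\bullet(\eta,m) = (K^\bullet \otimes_k R_m, d_\eta)$ with $d_\eta(\omega\otimes\phi) = (\eta\wedge\omega)\otimes s\phi$ since $d=0$. So the differential is literally multiplication by $\eta$, twisted by $s$. Writing $R_m = k[s]/(s^m)$, the complex $K^\bullet(\eta,m)$ is $(K^\bullet \otimes_k k[s]/s^m, \ \omega\otimes s^a \mapsto (\eta\wedge\omega)\otimes s^{a+1})$. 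One can analyze this by filtering by powers of $s$, or more cleanly by observing that $K^\bullet \otimes_k k[[s]]$ with differential $\omega \mapsto (\eta\wedge\omega)\otimes s$ is the Koszul-type complex computing something like the cohomology of $K^\bullet$ twisted by the "flat connection" $\eta \cdot s$; its cohomology over $k[[s]]$ (or $k((s))$) governs the generic behavior, and the torsion phenomena are controlled by how the rank jumps. The key point, which should follow from a spectral sequence or a direct filtration argument, is that multiplying by a sufficiently high power of $s$ kills all the "transient" torsion classes, so that the surviving $s$-torsion in $H^j(K^\bullet(\eta,m))$ for $m\gg 0$ is exactly the part where $s$ already acts by zero — giving semisimplicity.

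The main obstacle I expect is precisely this algebraic claim about thickenings of a formal (zero-differential) cdga: proving that the $s$-torsion of $H^j(K^\bullet(\eta,m))$ is semisimple. One clean route is to use that $K^\bullet$ with zero differential is quasi-isomorphic to itself, so $H^j(K^\bullet(\eta,m))$ can be computed directly from the bigraded object $K^\bullet \otimes_k k[s]/(s^m)$ with differential $\wedge\eta \cdot s$; then a careful bookkeeping of the $s$-adic filtration shows the associated graded of the torsion is concentrated where $s=0$. Another route is to invoke the structure theory already present in the Budur--Liu--Wang argument (this is, after all, their Proposition~1.7), citing \cite{budurliuwang} for the $r=\infty$ case and noting, as the paper does in Remark~\ref{remk:compute}, that only the complexes $K^\bullet(\eta,m)$ and their cohomology in degrees $\leq r$ enter the argument, so the proof goes through verbatim for finite $r$ once Proposition~\ref{prop:rqiso} supplies the $r$-formal replacement. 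I would present the proof in the second form — reduce to \cite[Proposition 1.7]{budurliuwang} via Proposition~\ref{prop:rqiso} and Remark~\ref{remk:compute} — and remark that the only new input needed over the cited result is the observation that $r$-quasi-isomorphisms suffice in degrees up to $r$, which is exactly what Proposition~\ref{prop:rqiso} and Remark~\ref{remk:compute} provide.
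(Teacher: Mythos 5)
Your proposal takes essentially the same route as the paper: the paper itself gives no independent argument but simply cites \cite[Proposition 1.7]{budurliuwang} for the $r=\infty$ case and observes, via Proposition~\ref{prop:rqiso} and Remark~\ref{remk:compute}, that $r$-quasi-isomorphisms between the thickened complexes suffice to carry the conclusion over to $r$-formal varieties in cohomological degrees $\leq r$. Your ``second form'' of the argument is precisely this reduction, so the two match.
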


We can now prove Corollary~\ref{cor:formal1} from Section~\ref{s:intro}. We restate it here for clarity, this time using cohomological notation.

\begin{corollary}\label{cor:formal}
Let $U$ be a smooth connected complex algebraic variety, let $f:U\rightarrow \C^*$ be an algebraic map inducing an epimorphism on fundamental groups, and let $r\geq 1$. Let $\cL$ be the rank $1$ local system of $R$-modules induced by $f$. Assume that 
 for every integer $k\leq r$, $H^k(U;\Q)$ is pure of weight $2k$. Then, for all $j\leq r-1$,
\begin{itemize}
\item the MHS summand $(\Tors_R H^{j+1}(U;\ov\cL))_1$ of $\Tors_R H^{j+1}(U;\ov\cL)$ is pure of type $(j,j)$.
\item $(\Tors_R H^{j+1}(U;\ov\cL))_1$ has dimension $$\sum_{l=0}^j (-1)^{l+j}\left(\dim_{\Q}H_l(U;\Q) - \rank_{\Q[t^{\pm 1}]}H_l(U^f;\Q)\right).$$
\end{itemize}
\end{corollary}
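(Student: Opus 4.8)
The plan is to reduce the statement to a computation with the thickened cdga model provided by Remark~\ref{remk:compute} and Proposition~\ref{prop:rqiso}, using the purity hypothesis on $H^*(U;\Q)$ in two ways: once to invoke formality via Theorem~\ref{thm:dupont}, and once to pin down the Hodge type of the outcome. First I would note that by Theorem~\ref{thm:dupont}, case~\eqref{part:dupont2}, the purity assumption "$H^k(U;\Q)$ pure of weight $2k$ for all $k\leq r$" implies that $U$ is $r$-formal; moreover, by Remark~\ref{remk:dupont}, $H^j(U;\Q)$ is in fact pure of type $(j,j)$ for $j\leq r$.

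Next, fix a cdga $K^\bullet$ with zero differential joined to $\Gamma(U,\cE_U^\bullet)$ by a zig-zag of $r$-quasi-isomorphisms, and let $\eta\in K^1 = H^1(U;\R)$ correspond to $\Im\frac{df}{f}$. By Remark~\ref{remk:compute}, for $j\leq r$ and $m\gg 0$,
\[
\left(\Tors_R H^{j+1}(U;\ov\cL)\right)_1\cong\ker\left(H^{j+1}\left(K^\bullet(\eta,m)\right)\xrightarrow{\cdot s^m}H^{j+1}\left(K^\bullet(\eta,2m)\right)\right).
\]
Since $K^\bullet$ has zero differential, $K^\bullet(\eta,m)$ is the Koszul-type complex $(K^\bullet\otimes_k R_m, d_\eta)$ whose differential is $\omega\otimes\phi\mapsto (\eta\wedge\omega)\otimes s\phi$; its cohomology is computable directly in terms of the wedge-with-$\eta$ map on $K^\bullet=H^\bullet(U;\R)$ and the $R_m$-module structure. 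The key algebraic point I would extract is that, because $\left(\Tors_R H^{j+1}(U;\ov\cL)\right)_1$ is semisimple by Proposition~\ref{prop:BLW} (applicable since $U$ is $r$-formal and $j+1\leq r$), the operator $s=t-1$ acts as zero on it, so only the $m=1$ layer contributes and the eigenspace is identified with a subquotient of $H^{j+1}(U;\R)\otimes R_1 = H^{j+1}(U;\R)$, more precisely with the cokernel/kernel of $\eta\wedge(-)$ in the appropriate degrees. Then the Hodge type is forced: the MHS on $H^{j+1}(U;\R)$ is pure of type $(j+1,j+1)$ for $j+1\leq r$, but the thickening construction introduces a Tate twist (the element $s\in R_m$ carries weight corresponding to $\log(1+s)/2\pi$, which lives in a $(-1)$-twist, as recorded in Theorem~\ref{thm:summary}\eqref{part:log} and the displayed morphism in Remark~\ref{remk:noN}). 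Tracking that single Tate twist through the identification gives type $(j,j)$, which matches the claim. For the dimension formula, I would simply invoke Corollary~\ref{cor:3}: semisimplicity of $(\Tors_R H_l(U^f;\Q))_1$ for all $l\leq r$ (the homological dual of Proposition~\ref{prop:BLW}, via Remark~\ref{remk:dual} and Corollary~\ref{cor:dual}) yields exactly the stated alternating sum, and the cohomological and homological eigenspaces have the same dimension by Corollary~\ref{cor:dual}.

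The main obstacle I anticipate is bookkeeping the weight/Hodge filtration through the thickened-cdga model precisely enough to conclude purity of type $(j,j)$ rather than merely $(j+1,j+1)$ or $(j,j+1)$: one must check that the isomorphism of Remark~\ref{remk:compute} is one of MHS (not just of vector spaces), that the mixed Hodge complex structure on $K^\bullet(\eta,m)$ restricts correctly to the kernel isolating the torsion, and that the Tate twist is accounted for exactly once. A secondary point to handle carefully is that $\eta\ne 0$ in $H^1(U;\R)$ (it pulls back a generator of $H^1(\C^*)$, cf. Remark~\ref{remk:11}), which is what makes the relevant $\eta\wedge(-)$ maps nontrivial and what connects $\rank_{\Q[t^{\pm1}]}H_l(U^f;\Q)$ to the cohomology of $K^\bullet(\eta,m)$ in the limit $m\to\infty$; aligning this with the rank appearing in the dimension formula requires a small compatibility check between the finite-$m$ model and the Alexander module itself. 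Once these identifications are in place, both bullet points follow without further computation.
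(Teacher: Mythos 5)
Your first steps (formality from Theorem~\ref{thm:dupont}, semisimplicity of the eigenvalue-$1$ eigenspace from Proposition~\ref{prop:BLW}, and the dimension count from Corollary~\ref{cor:3} after dualizing via Corollary~\ref{cor:dual}) all match the paper, and that part of your argument is correct. But for the purity claim you take a detour through the thickened cdga model $K^\bullet(\eta,m)$, and that part has a genuine gap. The identification of $(\Tors_R H^{j+1}(U;\ov\cL))_1$ with ``a subquotient of $H^{j+1}(U;\R)$ after a single Tate twist'' is not justified: the relevant subquotient of the Koszul-type complex $(H^\bullet(U;\R)\otimes R_m,\,\eta\wedge(-)\otimes s)$ carries a cohomological degree shift coming from multiplication by $s$, not an explicit Tate twist on the kernel isolating the torsion (the Tate twist $(-m)$ appears only on the \emph{target} of the $\cdot s^m$ map). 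Making this precise -- showing that the weight filtration on $H^{j+1}(K^\bullet(\eta,m))$ collapses to type $(j,j)$ on the kernel of $\cdot s^m$ -- is exactly the computation you flag as ``the main obstacle''; it is not a routine bookkeeping matter, and your sketch as written would have produced type $(j+1,j+1)$ from a cokernel or $(j,j)$ from a kernel without resolving the degree shift question.

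The paper avoids this entirely by exploiting a tool you already used implicitly for the dimension formula: Corollary~\ref{cor:2} says that semisimplicity of $(\Tors_R H_j(U^f;\Q))_1$ is equivalent to $H_j(\pi)$ being injective on it, so the eigenspace is a sub-MHS of $H_j(U;\Q)$. By Remark~\ref{remk:dupont}, $H_j(U;\Q)$ is pure of type $(-j,-j)$, and any sub-MHS of a pure Hodge structure is pure of the same type. Dualizing with Corollary~\ref{cor:dual} then gives type $(j,j)$ with no thickened-complex calculation at all. You should replace the cdga-model discussion of purity with this injectivity argument; once you do, your proof coincides with the paper's.
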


\begin{proof}
Under the hypotheses of this Corollary, Theorem~\ref{thm:dupont} and Proposition~\ref{prop:BLW} imply that $(\Tors_R H^{j+1}(U;\ov\cL))_1$ is semisimple for all $j\leq r-1$. By Corollary~\ref{cor:dual}, $(\Tors_R H_j(U^f;\Q))_1$ is also semisimple, and by Corollary~\ref{cor:2}, it is a sub-MHS of $H_j(U;\Q)$, which is pure of type $(-j,-j)$ by Remark~\ref{remk:dupont}. Hence, $(\Tors_R H_j(U^f;\Q))_1$ must also be pure of type $(-j,-j)$ and thus, by Corollary~\ref{cor:dual}, its dual MHS $(\Tors_R H^{j+1}(U;\ov\cL))_1$ is pure of type $(j,j)$. Moreover, the dimension of $(\Tors_R H_j(U^f;\Q))_1$ (which is the same as the dimension of $(\Tors_R H^{j+1}(U;\ov\cL))_1$) was computed in Corollary~\ref{cor:3} under the semisimplicity assumption. 
\end{proof}

The quantity $\rank_{\Q[t^{\pm 1}]}H_l(U^f;\Q)$ that appears in Corollary~\ref{cor:formal} above is defined using the free part of $H_l(U^f;\Q)$, but this paper is about the torsion part. For the reader who is more comfortable with rank $1$ $\C$-local systems and cohomology jump loci, we give an interpretation of $\rank_{\Q[t^{\pm 1}]}H_l(U^f;\Q)$ in terms of those in equations (\ref{eqn:local}) and (\ref{eqn:jump}) of the following remark. 
\begin{remk}[The quantity $\rank_{\Q[t^{\pm 1}]}H_l(U^f;\Q)$]\label{remk:rank}
Let $\lambda\in\C^*$, and let $L_{\lambda}$ be the corresponding rank $1$ $\C$-local system on $\C^*$. Let $\left(\Tors_R H_l(U^f;\C)\right)_{\lambda}$ be the kernel of multiplication by $t-\lambda$ on $\Tors_R H_l(U^f;\C)$, which by Theorem~\ref{thm:summary}, part (\ref{part:first}) is zero if $\lambda^N\neq 1$. Note also that  $\Tors_R H_l(U^f;\C)/(t-\lambda)\Tors_R H_l(U^f;\C)$ is zero if $\lambda^N\neq 1$. The generalized Milnor long exact sequence (\cite[Equation (4.3)]{Lib97})
$$
\ldots\to H_l(U^f;\C)\xrightarrow{\cdot(t-\lambda)}H_l(U^f;\C)\to H_l(U;f^{-1}L_{\lambda})\to H_{l-1}(U^f;\C)\to\ldots
$$
gives rise to the short exact sequence
$$
0\to \frac{\Tors_R H_l(U^f;\C)}{(t-\lambda)}\oplus\frac{\operatorname{Free} H_l(U^f;\C)}{(t-\lambda)}\to H_l(U;f^{-1}L_{\lambda})\to\left(\Tors_R H_{l-1}(U^f;\C)\right)_{\lambda}\to 0
$$

which tells us that
$$
\dim_{\C}H_l(U;f^{-1}L_{\lambda})\geq\rank_{\Q[t^{\pm 1}]}H_l(U^f;\Q)=\dim_{\C}\frac{\operatorname{Free} H_l(U^f;\C)}{(t-\lambda)\operatorname{Free} H_l(U^f;\C)},
$$
and it also tells us that equality holds for all but finitely many $\lambda$ (if $\lambda^N\neq 1$). In particular,
\begin{equation}\label{eqn:local}
\rank_{\Q[t^{\pm 1}]}H_l(U^f;\Q)=\min\{\dim_{\C}H_l(U;f^{-1}L_{\lambda})\mid \lambda\in\C^*\}.
\end{equation}
Similarly, let $M_B(X)$ be the set of rank $1$ $\C$-local systems on a topological space $X$, and let $\mathcal{V}^l_i(X)\coloneqq\{L\in M_B(X)\mid \dim_{\C}H^l(X;L)\geq i\}$ be the cohomology jump loci of $X$. Then,
\begin{equation}\label{eqn:jump}
\rank_{\Q[t^{\pm 1}]}H_l(U^f;\Q)=\max\{i\mid f^{-1}M_B(\C^*)\subseteq\mathcal{V}^l_i(U)\}.
\end{equation}
\end{remk}

Let us consider two illustrative examples in which the purity hypothesis of Corollary~\ref{cor:formal} holds, namely the ones mentioned in Remark~\ref{remk:pure}.

\subsection{Toric arrangements}\label{remk:tors}
Suppose that $U$ is an $n$-dimensional closed smooth connected subvariety of  $(\C^*)^r$, so in particular, $U$ affine and thus has the homotopy type of an $n$-dimensional CW complex. Furthermore, assume that $f:U\to\C^*$ induces a generic epimorphism on fundamental groups. Then $H_j(U^f;\Q)$ is a torsion $R$-module for all $j\leq n-1$, free for $j=n$ and $0$ otherwise (\cite[Corollary 1.4]{LMWtopology}). In particular, if $U$ is a toric arrangement complement in $(\C^*)^n$ and $f$ induces a generic epimorphism on fundamental groups, then, for all $j\leq n-1$,
$$
\dim_{\Q}(\Tors_R H_j(U^f;\Q))_1=\dim_{\Q}(\Tors_R H^{j+1}(U;\ov\cL))_1=\sum_{l=0}^j (-1)^{l+j}\dim_{\Q}H_l(U;\Q).
$$

\subsection{Hyperplane arrangements}\label{remk:hyperplanes}
Let $\cA$ be an affine hyperplane arrangement consisting of $d$ distinct hyperplanes in $\C^n$, let $U$ be its complement, and let $r$ be its rank, that is, the maximal codimension of a non-empty intersection of some subfamily of hyperplanes in the arrangement. Suppose moreover that $f=f_1^{\varepsilon_1}\cdot\ldots\cdot f_d^{\varepsilon_l}$, where the $f_i$'s are irreducible polynomials defining the $d$ distinct hyperplanes in the arrangement, and $\varepsilon_i$ are integers. If $\gcd\{\varepsilon_1,\ldots,\varepsilon_l\}=1$, and $\varepsilon_i\geq 1$ for all $i$, then (\cite[Theorem 4]{thesiseva},\cite[Proposition 6.1]{kohnopajitnov})$$H_j(U^f;\Q)\text{ is }\left\{\begin{array}{lr}\text{a torsion }R\text{-module}&\text{ for all }0\leq j\leq r-1\\
\text{a free }R\text{-module}&\text{for }j=r\\
0 & \text{otherwise,}\end{array}\right.
$$
and, as explained in \cite[Section 10.1]{EGHMW} (cf. \cite{thesiseva})
$$
\Tors_R H^{j+1}(U;\ov\cL)\cong\left\{\begin{array}{lr}H^j(U^f;\Q)& \text{ \ \ \ \ if }0\leq j\leq r-1\\0& \text{otherwise.}\end{array}\right.
$$

For simplicity in the notation, we will talk about the MHS on $H^j(U^f;\Q)$ from now on, and assume that $j$ is in the appropriate range. Corollary~\ref{cor:formal} provides the following extension of \cite[Theorem 10.1.5 (1)]{EGHMW}, which proved the case when $j=1$ and $\varepsilon_i=1$ for all $i$.
\begin{corollary}
Consider the notation and assumptions from the beginning of Section~\ref{remk:hyperplanes}. Then, for all $j\leq r-1$, $H^j(U^f;\Q)_1$ is the unique (up to isomorphism) pure Hodge structure of type $(j,j)$ of dimension $\sum_{l=0}^j (-1)^{l+j}\dim_{\Q}H_l(U;\Q)$. In particular, the MHS on $H^j(U^f;\Q)_1$ only depends on combinatorial data of the arrangements (the Betti numbers of $U$).
\end{corollary}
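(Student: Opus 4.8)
The plan is to obtain this corollary as a direct specialization of Corollary~\ref{cor:formal} to the hyperplane arrangement complement $U$. The first step is to verify its purity hypothesis. For a complement of hyperplanes, the cohomology ring $H^*(U;\Q)$ is the Orlik--Solomon algebra, generated in degree one by the logarithmic classes $\left[\frac{1}{2\pi i}\frac{df_i}{f_i}\right]$, each of Hodge--Tate type $(1,1)$; consequently $H^k(U;\Q)$ is pure of type $(k,k)$, in particular pure of weight $2k$, for every $k$ (see \cite{shapiro}; this is the purity referenced in Remark~\ref{remk:pure}). Since this holds for every $k$, the hypothesis of Corollary~\ref{cor:formal} is satisfied no matter how large we take the integer $r$ there; choosing it to equal the rank of $\cA$, we obtain the conclusions of that corollary for all $j\leq r-1$.

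Next I would simplify the dimension formula supplied by Corollary~\ref{cor:formal}. Fix $j\leq r-1$. For every $l$ with $0\leq l\leq j$ we have $l\leq r-1$, and in this range $H_l(U^f;\Q)$ is a torsion $R$-module by the result recalled at the beginning of Section~\ref{remk:hyperplanes} (\cite[Theorem 4]{thesiseva}, \cite[Proposition 6.1]{kohnopajitnov}); hence $\rank_{\Q[t^{\pm 1}]}H_l(U^f;\Q)=0$ for all such $l$. Therefore Corollary~\ref{cor:formal} gives that $(\Tors_R H^{j+1}(U;\ov\cL))_1$ is pure of type $(j,j)$ and of dimension $\sum_{l=0}^j(-1)^{l+j}\dim_{\Q}H_l(U;\Q)$. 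Transporting this mixed Hodge structure along the isomorphism $\Tors_R H^{j+1}(U;\ov\cL)\cong H^j(U^f;\Q)$ recalled at the start of the section (which is precisely how the MHS on $H^j(U^f;\Q)$ is defined in this range) yields the corresponding statement for $H^j(U^f;\Q)_1$.

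It remains to deduce the uniqueness and combinatoriality assertions. A pure Hodge structure of type $(j,j)$ is a direct sum of copies of the Tate structure $\Q(-j)$, hence is determined up to isomorphism by its dimension alone; this gives the uniqueness. For combinatoriality, the Betti numbers $\dim_{\Q}H_l(U;\Q)$ of a hyperplane arrangement complement are combinatorial invariants, being computable from the intersection lattice; since the dimension of $H^j(U^f;\Q)_1$ is the alternating sum $\sum_{l=0}^j(-1)^{l+j}\dim_{\Q}H_l(U;\Q)$ of these numbers, the isomorphism class of $H^j(U^f;\Q)_1$ depends only on the combinatorics of $\cA$.

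I do not anticipate a genuine obstacle here: once purity is in hand, everything is bookkeeping built on Corollary~\ref{cor:formal}. The one point that warrants a moment of care is keeping track of the two roles of the letter ``$r$'': here $r$ is the rank of $\cA$, which controls both the range $0\leq j\leq r-1$ in which $H^j(U^f;\Q)\cong\Tors_R H^{j+1}(U;\ov\cL)$ and the range in which the homology Alexander modules $H_l(U^f;\Q)$ are torsion; whereas the formality/purity parameter in Corollary~\ref{cor:formal} can be chosen arbitrarily large because $H^*(U)$ is pure in every degree, so the two are harmlessly taken equal.
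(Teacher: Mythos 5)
Your proof is correct and follows the route the paper clearly intends: invoke the purity of $H^*(U;\Q)$ for hyperplane arrangement complements (the Orlik--Solomon/Brieskorn purity cited in Remark~\ref{remk:pure}), feed it into Corollary~\ref{cor:formal}, observe that the $\rank_{\Q[t^{\pm1}]}$ terms vanish since the Alexander modules are torsion for $l\leq r-1$, transport along $\Tors_R H^{j+1}(U;\ov\cL)\cong H^j(U^f;\Q)$, and conclude using that a pure Hodge structure of type $(j,j)$ is Hodge--Tate and hence determined by its dimension, which in turn is combinatorial. Your explicit note disentangling the rank of $\cA$ from the formality parameter in Corollary~\ref{cor:formal} is a useful clarification but does not change the argument.
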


Let $H^j(U^f;\Q)_{\neq 1}$ be the direct sum of all the  summands other than $H^{j}(U^f;\Q)_1$ in the direct sum decomposition of Remark~\ref{remk:decompcohom}. The rest of this section is devoted to discussing what we know about the MHS on $H^j(U^f;\Q)_{\neq 1}$ when $U$ is an affine hyperplane arrangement complement, under the new hypothesis that $f$ is a reduced defining polynomial of the arrangement.

Recall that if $\cA$ is a central arrangement, then $U^f$ is homotopy equivalent to the global Milnor fiber $F$ of $f$. In that case, \cite[Theorem 7.7]{dimca2017hyperplanes} tells us that $\Gr^W
_{2j} H^j(F, \Q)_{\neq 1} = 0$, that is, the only possible non-zero graded pieces for the weight filtration are $j,j+1,\ldots,2j-1$. In \cite[Theorem 10.1.5]{EGHMW}, this result was generalized to non-central arrangement complements $U$, that is, to $H^j(U^f, \Q)_{\neq 1}$, but only for $j=1$. In that proof, the semisimplicity of $H^1(U^f, \Q)$ was heavily used (recall item (\ref{part:first}) after Corollary~\ref{cor:eigenspaces}). However, we do not know whether  $H^j(U^f, \Q)$ is semisimple for $j\geq 1$, this was stated as an open problem in \cite[Question 2]{EGHMW}. In fact, a generalization of \cite[Theorem 10.1.5]{EGHMW} to the case $j=2$ can only be achieved if $H^2(U^f, \Q)$ is semisimple, as exemplified by the following result.

\begin{corollary}
If $\Gr^W_4 H^2(U^f;\Q)_{\neq 1}\cong 0$, then $H^2(U^f;\Q)$ is a semisimple $R$-module.
\end{corollary}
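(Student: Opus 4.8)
The plan is to show that, under the stated hypothesis, multiplication by $t$ is an automorphism of mixed Hodge structures on $H^2(U^f;\Q)$; the semisimplicity of $H^2(U^f;\Q)$ as an $R$-module then follows from the cohomological form of Theorem~\ref{thm:summary}, part~\eqref{part:t} (see Remark~\ref{remk:dual}). By the standing convention of Section~\ref{remk:hyperplanes} we work in the range $2\le r-1$, so that $H^2(U^f;\Q)\cong\Tors_R H^3(U;\ov\cL)$ is a torsion $R$-module. Write $t=t_{ss}t_u$ for the Jordan--Chevalley decomposition. By Theorem~\ref{thm:tss} in its cohomological version (Remark~\ref{remk:decompcohom}), $t_{ss}$ is already an MHS automorphism, so it suffices to prove that $t_u=\id$, i.e.\ that the nilpotent operator $\nu:=\tfrac1N\log(t^N)$ vanishes: indeed $t^N$ acts unipotently (Theorem~\ref{thm:summary}, part~\eqref{part:N}) and $t_{ss}^N=\id$, hence $t^N=t_u^N$, $\nu=\log t_u$, and $t_u=\exp\nu$. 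By the cohomological form of Theorem~\ref{thm:summary}, part~\eqref{part:log}, $\nu$ is a morphism of mixed Hodge structures $H^2(U^f;\Q)\to H^2(U^f;\Q)(-1)$; in particular $\nu(W_k)\subseteq W_{k-2}$ for the weight filtration $W$. Since $\nu$ commutes with $t_{ss}$ it preserves the MHS decomposition $H^2(U^f;\Q)=H^2(U^f;\Q)_1\oplus H^2(U^f;\Q)_{\neq1}$ of Remark~\ref{remk:decompcohom}, so it is enough to check $\nu=0$ on each summand.

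On the eigenvalue-$1$ summand I would argue as follows. Arrangement complements have $H^k(U;\Q)$ pure of weight $2k$ for all $k$ (Remark~\ref{remk:pure}), so Corollary~\ref{cor:formal} applies with $j=2$ and gives that $H^2(U^f;\Q)_1$ is pure of type $(2,2)$, i.e.\ a pure Hodge structure of weight $4$. Restricting $\nu$ then produces an MHS morphism from this pure Hodge structure of weight $4$ to its $-1$st Tate twist, which is pure of weight $6$; but any MHS morphism sends the source, which equals $W_4$, into $W_4$ of the target, namely $0$. Hence $\nu=0$ on $H^2(U^f;\Q)_1$.

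On the summand $M:=H^2(U^f;\Q)_{\neq1}$ I would combine the hypothesis with the a priori weight bound for these modules: for a central arrangement $H^j(F;\Q)$ has weights in $[j,2j]$, and the analogous bound for $H^j(U^f;\Q)$ is available from \cite{EGHMW} (compare the discussion preceding this corollary), so the weights of $M$ lie in $\{2,3,4\}$, and the hypothesis $\Gr^W_4M=0$ shrinks this to $\{2,3\}$, i.e.\ $W_1M=0$ and $W_3M=M$. Since $\nu$ is an MHS morphism $M\to M(-1)$, this forces $\nu(M)=\nu(W_3M)\subseteq W_3\bigl(M(-1)\bigr)=W_1M=0$. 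Thus $\nu=0$ on $M$ as well, so $\nu=0$, $t=t_{ss}$ is an MHS automorphism, and $H^2(U^f;\Q)$ is a semisimple $R$-module by the cohomological form of Theorem~\ref{thm:summary}, part~\eqref{part:t}.

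The only ingredient that is not bookkeeping with the Jordan--Chevalley decomposition and with the already-established facts that $t_{ss}$ and $\log t_u$ are MHS morphisms is the a priori bound that $H^2(U^f;\Q)_{\neq1}$ has weights in $\{2,3,4\}$; granting it, the content of the hypothesis is exactly to reduce the weight window to width $1$, which kills the weight-lowering operator $\nu$. I expect this weight-bound input (rather than any new computation) to be the crux, together with making sure Corollary~\ref{cor:formal} genuinely covers the eigenvalue-$1$ part in degree $2$. Without the hypothesis the same argument gives only $\nu^2=0$ on $M$, which is precisely the room in which a non-semisimple $H^2(U^f;\Q)$ could live, and which is why this statement is the natural companion to the central-arrangement vanishing $\Gr^W_{2j}H^j(F;\Q)_{\neq1}=0$.
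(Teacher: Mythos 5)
Your proof is correct and follows essentially the same route as the paper's: both reduce semisimplicity to the vanishing of the weight-lowering operator $\log(t^N)$ (equivalently $\nu=\log t_u$, since $t^N=t_u^N$ as you observe), split along the eigenvalue decomposition, and on $H^2(U^f;\Q)_{\neq 1}$ get vanishing by pairing the a priori weight window $[2,4]$ from \cite[Theorem 7.4.1]{EGHMW} with the hypothesis that weight $4$ drops out, so that a morphism into a $(-1)$-Tate twist is forced to land in $W_1=0$. The one cosmetic difference is on the eigenvalue-$1$ summand: the paper simply cites Proposition~\ref{prop:BLW} (semisimplicity of $(\Tors_R H^*)_1$ for formal $U$) to kill $\log(t^N)$ there, whereas you instead invoke Corollary~\ref{cor:formal} to get purity of type $(2,2)$ and then run the weight-mismatch argument again; this buys nothing (Corollary~\ref{cor:formal} is itself derived from Proposition~\ref{prop:BLW}) but it is equally valid. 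You correctly flag the weight bound on $H^2(U^f;\Q)$ as the crux input; the paper supplies precisely that via \cite[Theorem 7.4.1]{EGHMW}.
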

\begin{proof}
By \cite[Theorem 7.4.1]{EGHMW}, $\Gr^W_i H^2(U^f;\Q)=0$ if $i\notin[2,4]$, so for $\Gr^W_i H^2(U^f;\Q)_{\neq 1}$, the same is true. Let $N$ and $m$ be such that $(t^N-1)^m$ annihilates $\Tors_R H_*(U^f;\Q)$ for all $*$. By part~\eqref{part:log} of Theorem~\ref{thm:summary}, multiplication by $\log(t^N)$ decreases the weight by $2$. Hence, if $\Gr^W_4 H^2(U^f;\Q)_{\neq 1}\cong 0$, then multiplication by $\log(t^N)$ is identically $0$ on $H^2(U^f;\Q)_{\neq 1}$.

Since $\log(t^N)$ (seen as a power series on $(t^N-1)$) and $(t^N-1)$ differ by a unit in $R(N)_m$ for all $m\geq 1$, we have that multiplication by $\log(t^N)$ is identically $0$ on $H^2(U^f;\Q)$ if and only if $H^2(U^f;\Q)$ is a semisimple $R$-module.  The result follows from the observation that multiplication by $\log(t^N)$ is identically $0$ on $H^2(U^f;\Q)_{1}$, since it is semisimple by Proposition~\ref{prop:BLW}. 
\end{proof}

\bibliographystyle{plain}

\bibliography{ambib}

\end{document}